\crefname{lstlisting}{listing}{listings}
\Crefname{lstlisting}{Listing}{Listings}
\newif\ifbw
\definecolor{orange}{HTML}{f6a800} 
\definecolor{red}{HTML}{bd0000} 
\definecolor{reddot}{HTML}{bd0000} 
\definecolor{green}{HTML}{74DF00} 
\definecolor{gray}{HTML}{646567} 
\definecolor{darkblue}{HTML}{00549F}
\definecolor{lightblue}{HTML}{e8f1fa}
\definecolor{gray2}{HTML}{9c9e9f} 
\definecolor{lightblue2}{HTML}{8ebae5}
\definecolor{petrol}{HTML}{2d7f83}
\definecolor{lila}{HTML}{6A0888}
\definecolor{backgroundcolor}{HTML}{FFFFFF}
\definecolor{backgroundcolor1}{HTML}{e8f1fa}
\definecolor{backgroundcolor2}{HTML}{dcdddd}
\definecolor{backgroundcolor3}{HTML}{eceded}
\tikzstyle{greenline}=[color=green]
\tikzstyle{orangeline}=[color=orange]
\tikzstyle{redline}=[color=red]
\tikzstyle{whiteline}=[color=white]
\tikzstyle{orangedot}=[fill=orange]
\tikzstyle{green}=[draw=black,fill=black!50]
\tikzstyle{darkblue}=[draw=black,fill=white]
\tikzstyle{orange}=[draw=black,fill=black!85, preaction={fill, white}]
\tikzstyle{gray}=[draw=black,fill=black!20]
\tikzstyle{red}=[draw=black,fill=black!85]
\tikzstyle{petrol}=[fill=black,draw=white,preaction={draw=black,line width=3pt}]
\tikzstyle{lila}=[fill=white,draw=black,copy shadow={shadow scale=1.5,shadow xshift=0pt, shadow yshift=0pt}]
\tikzstyle{lightblue2}=[fill=black!30,draw=black,preaction={draw=black!30,line width=3pt}]
\tikzstyle{gray2}=[fill=black!20,draw=black!60,preaction={draw=black!60,line width=3pt}]
\tikzstyle{orangedot}=[fill=black!50,draw=white,preaction={draw=black!50,line width=3pt}]
\tikzstyle{greenline}=[draw=black!20,dashed]
\tikzstyle{orangeline}=[draw=black!20,dotted]
\tikzstyle{redline}=[draw=black!20,solid]
\lstdefinelanguage{GAP}{%
  morekeywords={%
    and,break,continue,do,elif,else,end,false,fi,for,%
    function,if,in,local,mod,not,od,or,quit,rec,repeat,return,%
    then,true,until,while%
  },%
  sensitive,%
  morecomment=[l]\#,%
  morestring=[b]",%
  morestring=[b]',%
}[keywords,comments,strings]
\colorlet{myblue}{blue!70!black}
\newtheorem{theorem}{Theorem}[section]
\newtheorem{lemma}[theorem]{Lemma}
\newtheorem{proposition}[theorem]{Proposition}
\newtheorem{corollary}[theorem]{Corollary}
\newtheorem{question}[theorem]{Question}
\newtheorem{thmA}{Theorem}
\theoremstyle{definition}
\newtheorem{definition}[theorem]{Definition}
\theoremstyle{remark}
\newtheorem{rem}[theorem]{Remark}
\newtheorem{example}[theorem]{Example}
\newcommand{\Z}{\mathbb{Z}}
\newcommand{\N}{\mathbb{Z}_+}
\newcommand{\NN}{\mathbb{Z}_{\ge0}}
\newcommand{\R}{\mathbb{R}}
\newcommand{\HH}{\mathcal{H}}
\newcommand{\OO}{\mathcal{O}}
\newcommand{\T}{\mathbb{T}}
\newcommand{\SL}{\operatorname{SL}}
\newcommand{\id}{\operatorname{id}}
\newcommand{\aut}{\operatorname{Aut}}
\newcommand{\mult}{\operatorname{mult}}
\newcommand{\ord}{\operatorname{ord}}
\newcommand{\property}{property~\textnormal{(}$\mathcal{C}$\textnormal{)}\xspace}
\newcommand\inv{^{-1}}
\subjclass[2010]{32G15, 14H30, 57M10, 20D15}
\keywords{translation surfaces, square-tiled surfaces, origamis, strata, $p$-groups}
\author{Johannes Flake}
\address{Algebra and Representation Theory,
RWTH Aachen University,
Pontdriesch 10-16,
52062 Aachen, Germany}
\email{flake@art.rwth-aachen.de}
\author{Andrea Thevis}
\address{Department of Mathematics and Computer Science,
Saarland University,
66123 Saarbr\"ucken, Germany}
\email{thevis@math.uni-sb.de}
\begin{document} 

\title{Strata of \texorpdfstring{$p$}{p}-Origamis}

\begin{abstract} Given a two-generated group of prime-power order, we investigate the singularities of origamis whose deck group acts transitively and is isomorphic to the given group. Geometric and group-theoretic ideas are used to classify the possible strata, depending on the prime-power order. We then show that for many interesting known families of two-generated groups of prime-power order, including all regular, or powerful ones, or those of maximal class, each group admits only one possible stratum. However, we also construct examples of two-generated groups of prime-power order which do not determine a unique stratum.
\end{abstract}

\maketitle

\tableofcontents

\section{Introduction}
In this article, we study a certain class of translation surfaces. The theory of translation surfaces has been a very active research area over the past 40 years with connections to mathematical billiards, dynamical systems, and Teich\-m\"uller theory. These applications are for example discussed in the survey articles \cite{HubertSchmidt}, \cite{wright2014}, and \cite{Zorich06}.

The strata $\HH_g(k_1\times a_1,\dots,k_m\times a_m)$, $a_i,k_i\in \N$, stratify the space of translation surfaces of genus $g$. Here the \emph{stratum} $\HH_g(k_1\times a_1,\dots,k_m\times a_m)$ consists of all translation surfaces of genus $g$ with $k_i$ singularities of multiplicity $a_i+1$ for $1\le i\le m$. The study of strata has played an important role ever since the fundamental work of Masur, Smillie, and Veech in the 1980s (see \cite{Masur,Veech89,KMS86}). A first essential result was achieved by Kontsevich and Zorich with the classification of the connected components of strata in \cite{Kontsevich_2003}. A natural $\SL(2,\R)$-action on each stratum is crucial in the study of translation surfaces. Eskin and Mirzakhani described in their groundbreaking work \cite{eskin2013invariant} the closure of the $\SL(2,\R)$-invariant subspaces. 

In our article, we are interested in certain translation surfaces which we call $p$-origamis. Origamis, also known as square-tiled surfaces, are finite torus covers and form a particularly interesting class of translation surfaces. On the one hand, the set of origamis is dense in each stratum. On the other hand, each origami defines a Teichm\"uller curve, i.e., an algebraic curve induced by the $\SL(2,\R)$-orbit of certain translation surfaces. Moreover, each algebraic curve over $\overline{\mathbb{Q}}$ is birational to a Teichm\"uller curve arising from the $\SL(2,\R)$-orbit of an origami (see \cite{Ellenberg}). In general, the classification of $\SL(2,\R)$-orbits of origamis is an unsolved problem. However, in the stratum $\HH(2)$, the possible orbits of origamis have been classified in \cite{HubertLelievre06} and \cite{McM}. Furthermore, counting problems of origamis are related to the study of Masur--Veech volumes of strata (see e.g. \cite{delecroix2016, delecroix2019, aggarwal2019, GM20}). We refer the interested reader to \cite{schmithuesen2004algorithm}, \cite{schmithuesen2007origamis}, and \cite{Zmiaikou} as introductions to origamis. 

Origamis arising as normal torus covers are called regular or \emph{normal origamis}. In \cite{MYZ}, the homology groups of normal origamis are studied, and results on the Lyapunov exponents of the Kontsevich--Zorich cocycle (which capture certain dynamical properties of a surface) are deduced.

Each normal origami is determined by its deck transformation group $G$ and a particularly chosen pair of generators $(x,y)$ of $G$. This description allows us to examine such origamis using group theory. One question which arises naturally asks in which strata normal origamis occur. It turns out that, if $G$ has order $d$ and the commutator of $x$ and $y$ has order $a$, then the stratum of the corresponding origami is $\mathcal{H}(\frac da \times (a-1))$ (see \Cref{rem: connection stratum - 2-gen gps}). 

As special cases of normal origamis, we consider those whose deck group is a finite $p$-group, that is, a group of prime-power order. We call such origamis \emph{$p$-origamis}.
A motivating example for studying them is the well-known origami \emph{Eierlegende Wollmilchsau} (see \Cref{eierlegende wollmilchsau} and \cite{HerSchmit}). It is one out of two translation surfaces whose $\SL(2,\R)$-orbit induces not only a Teichm\"uller curve, but also a Shimura curve in the moduli space of abelian varieties. As a consequence, its Teichm\"uller curve has extraordinary dynamical behavior (see \cite{aulicino2019} and \cite{Moeller}).

We prove a precise characterization of all strata possible for $p$-origamis. As for many questions in the theory of $p$-groups, the situation is fundamentally different for the even prime $2$ and for all other odd primes. 

\begin{thmA}[\Cref{strata 2-origamis}, \Cref{strata p-origamis for p>2}] \label{thm-A}\label{intro-strata 2-origamis}\label{intro-strata p-origamis for p>2}
	Let $n\in \NN$. Then any $p$-origami of degree $p^n$ has either no singularity and genus $1$, or lies in one of the following strata:
	\begin{enumerate}[\textbullet]
		\item $\mathcal{H}\left( 2^{n-k} \times \left(2^k-1\right)\right),$ for $1\le k\le n-2$, if $p=2$,
		\item $\mathcal{H}\left( {p^{n-k}}\times \left( p^k -1\right) \right) ,$ for $1\le k <\frac{n}{2}$, if $p>2$.
	\end{enumerate}	
	Moreover, all of these strata occur.
\end{thmA}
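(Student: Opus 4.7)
My plan follows the reduction to group theory highlighted in the excerpt: by \Cref{rem: connection stratum - 2-gen gps}, a normal $p$-origami with deck group $G=\langle x,y\rangle$ of order $p^n$ lies in $\mathcal{H}(p^{n-k}\times(p^k-1))$ where $p^k=\lvert[x,y]\rvert$, with $[x,y]=1$ (i.e., $G$ abelian) producing the torus. The theorem is therefore equivalent to determining exactly which values $p^k$ arise as $\lvert[x,y]\rvert$ in a $2$-generated $p$-group of order $p^n$. There are three things to do: an upper bound on $k$, its sharpening in the odd case, and a construction realizing every admissible $k$.

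For the upper bound, Burnside's basis theorem gives $\lvert G/\Phi(G)\rvert=p^2$ since $G$ is $2$-generated, and $[x,y]\in G'\subseteq\Phi(G)$ yields $\lvert[x,y]\rvert\le\lvert G'\rvert\le p^{n-2}$. This already settles $p=2$. For $p>2$ I would establish the stronger inequality $k<n/2$, i.e.\ $\lvert G\rvert\ge p\cdot\lvert[x,y]\rvert^2$, first in nilpotency class $2$: there $G'=\langle c\rangle$ with $c=[x,y]$ of order $p^k$, and the class-$2$ identity $[x^{p^\alpha},y]=c^{p^\alpha}$ combined with centrality of $G'$ forces any $\bar x\in G/G'$ of order $p^\alpha$ to satisfy $\alpha\ge k$, and symmetrically for $\bar y$. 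Since $G/G'$ must be non-cyclic (otherwise $G$ is cyclic, hence abelian), in its decomposition $\mathbb{Z}/p^{\alpha'}\oplus\mathbb{Z}/p^{\beta'}$ one gets $\alpha'\ge 1$ and $\beta'\ge k$, so $\lvert G/G'\rvert\ge p^{k+1}$ and hence $\lvert G\rvert\ge p^{2k+1}$. The general case is reduced to class $2$ by quotienting by $\gamma_3(G)$; Hall--Petresco identities (together with regularity of $p$-groups of class below $p$) provide the inductive control on how powers of $c$ descend through the lower central series, and this is where the oddness of $p$ is essential.

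For realization I would exhibit a uniform metacyclic family. For $p$ odd and any $1\le k<n/2$, set
\[
G_{n,k}=\langle a,b\mid a^{p^{k+1}}=b^{p^{n-k-1}}=1,\ b^{-1}ab=a^{1+p}\rangle.
\]
The automorphism $a\mapsto a^{1+p}$ of $\mathbb{Z}/p^{k+1}$ has order exactly $p^k$, and $p^k$ divides $p^{n-k-1}$ precisely when $k\le n-k-1$; one checks $[a,b]=a^p$ has order $p^k$, $\lvert G_{n,k}\rvert=p^n$, and $G_{n,k}/\Phi(G_{n,k})\cong(\mathbb{Z}/p)^2$, so $G_{n,k}$ is $2$-generated. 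For $p=2$ and any $1\le k\le n-2$ the analogous generalized-dihedral family
\[
G_{n,k}^{(2)}=\langle a,b\mid a^{2^{k+1}}=b^{2^{n-k-1}}=1,\ b^{-1}ab=a^{-1}\rangle
\]
does the job (at $k=n-2$ this is $D_{2^{n-1}}$). The torus case $k=0$ is realized by any abelian $2$-generated $p$-group, e.g.\ $(\mathbb{Z}/p)^2$.

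The principal obstacle is the sharper upper bound for odd $p$: the gap from $k\le n-2$ to $k<n/2$ is the sort of $\sqrt{\lvert G\rvert}$-type bound that distinguishes odd primes from $2$, and it genuinely fails at $p=2$, as the dihedral groups saturating $k=n-2$ witness. The reformulation via \Cref{rem: connection stratum - 2-gen gps}, the Burnside-basis step, and the metacyclic constructions are essentially routine; the technical heart of the proof is the passage from class $2$ to higher nilpotency class in the odd case.
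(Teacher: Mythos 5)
Your reduction to group theory, the Burnside-basis bound $\ord([x,y])\le\exp(G')\le p^{n-2}$, and the realizing metacyclic families are exactly what the paper does (your $G_{n,k}$ and $G_{n,k}^{(2)}$ are literally the groups $G^p_{(n,k)}$ and $G^2_{(n,k)}$ of \Cref{group theory: strata p-origamis for p>2} and \Cref{constr of 2-gps}, and \Cref{p+1 has order p^k} supplies the order computation for $1+p$). The gap is in the one step you yourself flag as the technical heart: the upper bound $\ord([x,y])<p^{n/2}$ for odd $p$. Your class-$2$ argument is correct (there $G'=\langle[x,y]\rangle$ is central and cyclic of order $p^k$, the identity $[x^{m},y]=[x,y]^{m}$ forces the images of $x$ and $y$ in $G/G'$ to have order at least $p^k$, and non-cyclicity of $G/G'$ gives $|G|\ge p^{2k+1}$), but the proposed reduction of the general case to class $2$ by passing to $G/\gamma_3(G)$ is not a proof. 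The order of $[x,y]$ can drop drastically in that quotient, so the class-$2$ bound applied to $G/\gamma_3(G)$ says nothing about $\ord([x,y])$ in $G$ unless you also control $|\gamma_3(G)|$ from below in terms of the lost powers of $p$ -- and the tools you invoke do not do this in general: regularity is only available for class $<p$ (or order $\le p^p$), whereas the statement must hold for $p$-groups of arbitrarily large class, and the Hall--Petresco error terms are exactly what is uncontrolled there.

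The paper closes this gap by an entirely different and much shorter induction (\Cref{thm bound for exp(G')}): for odd $p$, every non-cyclic finite $p$-group has a normal subgroup $N\cong C_p\times C_p$ (Huppert, Satz III.7.5). Setting $H=G/N$, every $g\in G'$ maps into $H'$, so $\ord(g)\le \exp(N)\cdot\exp(H')=p\cdot\exp(H')$, whence $\exp(G')^2\le p^2\exp(H')^2<p^2|H|=|G|$ by induction on $|G|$. This is also precisely where oddness enters: for $p=2$ the maximal-class groups (dihedral, semidihedral, generalized quaternion) have no such normal subgroup, and the dihedral groups saturate your $k=n-2$ bound, as you correctly observe. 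I would recommend replacing your $\gamma_3$-reduction sketch with this induction; as it stands, the odd-prime upper bound -- the only genuinely non-routine ingredient of the theorem -- is not established by your argument.
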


Consider a fixed abelian $2$-generated $p$-group. Then the commutator of any pair of generators is trivial, and hence, so is the stratum of any $p$-origami with the fixed group as its deck group. Two observations can be generalized from this simple example: First, we prove that the possible strata of a $p$-origami only depend on the isoclinism class of its ($p$-)group of deck transformations. Since all abelian groups are isoclinic to the trivial group, this indeed generalized our toy example, while it also implies, for instance, that the dihedral, the semidihedral, and the quaternion group of order $2^k$ for some $k\geq 1$ admit the same possible strata (see \Cref{rem-stratum-maximal-class-2gps}, where we also compute the exact stratum).

Second, we show that far beyond the abelian case, the deck group determines a unique stratum -- one which is independent of the choice of generators $x,y$ -- in many more situations.

\begin{thmA}[\Cref{thm-deck-group-determines-stratum}] \label{thm-B}\label{thm-groups-B}
    Many deck groups of prime-power order admit only one possible stratum for their $p$-origamis, including all $p$-groups $G$ which are regular, of maximal class, powerful, or those whose commutator subgroup $G'$ is regular, powerful or order-closed. This includes all $p$-groups of order up to $p^{p+2}$ or of nilpotency class up to $p$. 
\end{thmA}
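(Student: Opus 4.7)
The plan is to reduce \Cref{thm-B} to a purely group-theoretic statement about commutators and then verify that statement class by class. By the stratum formula (\Cref{rem: connection stratum - 2-gen gps}), a $p$-origami with deck group $G$ and generators $(x,y)$ lies in $\HH\bigl(|G|/|[x,y]| \times (|[x,y]|-1)\bigr)$. Since $|G|$ is fixed, $G$ admits a unique stratum if and only if the order $|[x,y]|$ is the same for every generating pair $(x,y)$ of $G$. This is the invariant I would aim to control.

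The universal tool I would use throughout is that, for $G = \langle x,y\rangle$, the commutator $[x,y]$ \emph{normally generates} $G'$, i.e.\ $G' = \langle [x,y]^G\rangle$. As a warm-up this already disposes of the case where $G'$ is cyclic: the subgroups of a cyclic $p$-group are totally ordered and automatically normal in $G$, so $[x,y]$ must be a generator of $G'$ and hence $|[x,y]| = |G'|$ for every generating pair. The strategy in each of the more general classes is to show that the structural hypothesis forces $|[x,y]|$ to agree with a canonical invariant of $G$, typically of the form $\exp(G'/K)$ for a characteristic subgroup $K \le G'$ of $G$.

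For \emph{regular} $p$-groups, I would invoke P.\ Hall's theorem: the subgroups $G^{p^i}$ and $\Omega_i(G)$ are characteristic, they interact with commutators through the regularity identity $(xy)^{p^i} \equiv x^{p^i} y^{p^i}$ modulo a controlled higher term, and every subgroup of a regular group is again regular, so $G'$ is itself regular. Applied to $G'$, this pins down the order of any normal generator, which includes $[x,y]$. The \emph{powerful} case runs in parallel, built on $[G,G] \le G^p$ (or $\le G^4$ when $p=2$) and the fact that $G^p$ is exactly the set of $p$th powers. For $p$-groups of \emph{maximal class}, $|G:G'| = p^2$ and the chief factors of the lower central series inside $G'$ all have size $p$, so there is essentially no room for variation in $|[x,y]|$. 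The three cases where only $G'$ satisfies one of the hypotheses then reduce to the same arguments applied inside $G'$.

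The final clause about groups of order $\le p^{p+2}$ or nilpotency class $\le p$ should then follow by assembling standard small-group facts: Hall's theorem gives regularity for nilpotency class $\le p-1$, and the borderline cases (class exactly $p$, or orders $p^{p+1}$ and $p^{p+2}$) can be handled by checking that at least $G'$ is regular or powerful. The step I expect to be the main obstacle is the situation in which $G$ itself is not regular but $G'$ is: the structural properties then live naturally on $G'$, while the normal-generation property of $[x,y]$ is a statement about $G$, so one must verify that the relevant characteristic filtrations of $G'$ are in fact $G$-invariant and interact correctly with the normal closure of $[x,y]$.
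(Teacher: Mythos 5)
Your overall reduction is exactly the paper's: a deck group admits a unique stratum if and only if $\ord([x,y])$ is constant over generating pairs (the paper's \property), and the engine is that $G'$ is the normal closure of $[x,y]$ (\Cref{commutator-generated}). Your cyclic-$G'$ warm-up, the regular/order-closed cases (the paper's mechanism: in a weakly order-closed group the exponent equals the maximal order occurring in any generating set, and all conjugates of $[x,y]$ have the same order, so $\ord([x,y])=\exp(G')$), the powerful cases (Lubotzky--Mann: a powerfully embedded subgroup that is the normal closure of a set is generated by that set, so $G'=\langle[x,y]\rangle$ is cyclic; and $\mho^m(G')$ is generated by the $p^m$-th powers of a generating set when $G'$ is powerful), and the small-order/small-class clause all match the paper. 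One remark: the obstacle you flag at the end --- $G$-invariance of characteristic filtrations of $G'$ --- does not actually arise in the paper's version, because the argument only uses that $G'$ is generated by conjugates of $[x,y]$, which all share one order, together with an \emph{intrinsic} property of $G'$ (weak order-closedness, or powerfulness).

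The genuine gap is the maximal class case. Knowing $|G:G'|=p^2$ and that the lower central factors inside $G'$ all have order $p$ only tells you that $[x,y]$ lies in $\gamma_2(G)\setminus\gamma_3(G)$ and normally generates $G'$; it does not control the \emph{element order} of $[x,y]$, since distinct elements of $\gamma_2(G)\setminus\gamma_3(G)$ can have different orders in a general $p$-group. The paper has to work for this conclusion: for $p=2$ it invokes the classification of maximal class $2$-groups (dihedral, generalized quaternion, semidihedral), each of which has cyclic commutator subgroup; for odd $p$ it splits by order, using that $\exp(G')=p$ when $p^5\le|G|\le p^{p+1}$ (Huppert, Hilfssatz III.14.14) and that $G$ possesses a regular maximal subgroup containing $\Phi(G)\supseteq G'$ when $|G|>p^{p+1}$ (Huppert, Satz III.14.22), so that in every case $G'$ is regular and the order-closed argument applies. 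Some structural input of this kind is unavoidable; ``no room for variation'' does not follow from the shape of the lower central series alone.
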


Our results on strata of $p$-origamis are obtained by an array of group-theoretic methods. Given a $2$-generated $p$-group $G$, the possible strata of all $p$-origamis with deck group $G$ depend on the possible orders of the commutators $[x,y]$ as $x,y$ vary over all pairs of generators. We derive various results on the possible exponent of the commutator subgroup, which contains the commutator $[x,y]$ and, in fact, in the case of $2$-generated groups, is generated by the set of its conjugates. We then show these exponents can always be realized as commutator orders of pairs of generators. This culminates in a complete characterization of the possible commutator orders $[x,y]$ for each prime-power group order, which forms the group-theoretic analogue of \Cref{thm-A}:
\begin{thmA}[\Cref{prop. exp(G')<n-1}, \Cref{constr of 2-gps}, \Cref{thm bound for exp(G')}, \Cref{group theory: strata p-origamis for p>2}]~
\begin{enumerate} \label{intro-prop. exp(G')<n-1}
    \item For any finite $2$-group $G$, $\exp(G')=1$ if $|G|\leq 2$, or else $\exp(G')\le \tfrac{|G|}{4}$.
    \item \label{intro-constr of 2-gps}
    For all integers $n\geq 2$ and $0\leq k\le n-2$, there exists a $2$-generated $2$-group $G$ of order $2^n$ with generators $x,y$ such that
    $$\ord([x,y])=\exp(G')=2^k.$$
    \item For any non-trivial finite $p$-group $G$ with odd $p$, $\exp(G')^2<|G|$.
	\item For any odd prime $p$ and any $n,k\in \NN$ with $k< \frac{n}{2}$, there exists a $2$-generated $p$-group $G$ of order $p^n$ with generators $x,y$ such that 
	$$\ord([x,y])=\exp(G')=p^k.$$
\end{enumerate}
\end{thmA}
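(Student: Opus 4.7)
The four claims split into two upper bounds (Parts~(1), (3)) and two matching constructions (Parts~(2), (4)) witnessing their sharpness. I would tackle them in parallel.

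\emph{Constructions (Parts~(2) and (4)).} For each admissible pair $(n,k)$, take the semidirect product
\[
G \;=\; \langle x\rangle\rtimes\langle y\rangle, \qquad \ord(x)=p^{k+1},\ \ord(y)=p^{n-k-1},
\]
where $y$ acts on $\langle x\rangle$ by $x\mapsto x^{1+p}$ if $p$ is odd and by $x\mapsto x^{-1}$ if $p=2$. For odd $p$ the automorphism $x\mapsto x^{1+p}$ of $\Z/p^{k+1}\Z$ has order $p^k$, and $p^k$ divides $\ord(y)=p^{n-k-1}$ precisely when $k<n/2$. For $p=2$ the inversion action has order $2$, so the construction is well-defined whenever $\ord(y)\geq 2$, i.e.\ $k\leq n-2$. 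A direct calculation yields $[x,y]\in\langle x\rangle$ of order $p^k$, whose normal closure $\langle[x,y]^G\rangle=\langle x^p\rangle$ (respectively $\langle x^2\rangle$) is cyclic of order $p^k$. Hence $G'=\langle x^p\rangle$ and $\exp(G')=p^k$, with $|G|=p^n$ by construction.

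\emph{Upper bound for $p=2$ (Part~(1)).} I would proceed by induction on $n=\log_2|G|$. For $n=2$, $G$ is abelian and $\exp(G')=1=2^{n-2}$. For $n\geq 3$, pick $z\in Z(G)$ of order $2$ and set $\overline{G}=G/\langle z\rangle$; by the inductive hypothesis $\exp(\overline{G}')\leq 2^{n-3}$. If $z\notin G'$, the natural map $G'\to\overline{G}'$ is injective and we are done. Otherwise $\overline{G}'\cong G'/\langle z\rangle$, so every $g\in G'$ satisfies $g^{2^{n-3}}\in\langle z\rangle$; squaring gives $g^{2^{n-2}}=1$, hence $\exp(G')\leq 2^{n-2}=|G|/4$.

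\emph{Upper bound for odd $p$ (Part~(3)).} The analogous induction yields only $\exp(G')^2\leq|G|$ rather than the required strict inequality: when $n$ is even and $z\in Z(G)\cap G'$, the inductive step loses exactly one factor of $p$. To recover strictness I would pass through the two-generated case, aiming to prove that for any two-generated odd $p$-group $H=\langle a,b\rangle$ with $\ord([a,b])=p^k$ one has $|H|\geq p^{2k+1}$, matching the bound realised by the construction above. In nilpotency class~$2$ this is transparent: the identity $[a^{p^k},b]=[a,b]^{p^k}=1$ (valid in class~$2$) places $a^{p^k},b^{p^k}$ in $Z(H)$, and the induced alternating form $\Lambda^2(H/Z(H))\to H'$ carries $\overline{a}\wedge\overline{b}$ to an element of order $p^k$; since $\Z/p^s\Z\otimes\Z/p^t\Z\cong\Z/p^{\min(s,t)}\Z$, this forces both $\overline{a},\overline{b}$ to have order at least $p^k$ in the non-cyclic quotient $H/Z(H)$, so $|H/Z(H)|\geq p^{2k}$, and together with $Z(H)\supseteq H'\supseteq\langle[a,b]\rangle$ one obtains $|H|\geq p^{3k}$. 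Extending this to higher nilpotency class, and then passing from the two-generated bound to arbitrary $G$ by showing that $\exp(G')$ is attained as $\ord([a,b])$ for some pair $a,b\in G$, constitute the main technical obstacle. Both steps rely on commutator identities specific to odd primes (in the spirit of regular $p$-group theory), which is precisely where the $p=2$ case diverges.
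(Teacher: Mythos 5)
Parts (2) and (4) of your proposal coincide with the paper's constructions: the semidirect products $C_{p^{k+1}}\rtimes C_{p^{n-k-1}}$ with the inversion action for $p=2$ and the action $x\mapsto x^{1+p}$ for odd $p$, where the order computation for $1+p$ modulo $p^{k+1}$ explains the threshold $k<n/2$. Part (1) is correct but argued differently: you induct through a central involution, whereas the paper simply notes $G'\subseteq\Phi(G)$ and $|\Phi(G)|\le |G|/p^2$ for non-cyclic $G$ by Burnside's basis theorem, which is shorter and gives the bound $\exp(G')\le p^{n-2}$ uniformly for all primes at once.

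The genuine gap is Part (3), which you yourself flag as unresolved. Your proposed route --- reduce to $2$-generated groups and bound $|H|$ below in terms of $\ord([a,b])$ --- faces two obstacles you do not overcome: extending the class-$2$ computation to arbitrary nilpotency class, and the reduction step itself, which needs $\exp(G')$ to be realized as the order of a single commutator; the latter is false in general and is exactly the kind of statement the paper only establishes under regularity or order-closedness hypotheses. The missing ingredient is Huppert's Lemma III~7.5: for odd $p$, every non-cyclic finite $p$-group contains a normal subgroup $N\cong C_p\times C_p$. One then inducts on $|G|$ via $H=G/N$: every $g\in G'$ maps to an element of $H'$ of order at most $\exp(H')$, so $\ord(g)\le\exp(N)\cdot\exp(H')=p\cdot\exp(H')$, and therefore $\exp(G')^2\le p^2\exp(H')^2<p^2|H|=|G|$. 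The point is that the quotient drops the group order by $p^2$ while the exponent bound grows only by $p$ --- precisely the asymmetry your central-element induction cannot produce, and precisely what fails for $p=2$, where such an $N$ need not exist (e.g.\ in generalized quaternion groups, which have a unique involution).
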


To investigate the question, in which cases the deck group already determines the stratum of a $p$-origami (as partially answered in \Cref{thm-B}), we similarly translate this into a group-theoretic problem. In group-theoretic language, this phenomenon corresponds to the property of a  $p$-group, that the commutator order is a fixed number for all pairs of generators. We call this \property, and we observe that many, but not all $p$-groups have \property. We solidify this observation by proving \property depending on some other known common properties of $p$-groups. The proven implications can be summarized with the following diagram (\Cref{thm-diagram}), they form the group-theoretic basis of \Cref{thm-B}:
\newcommand{\diagram}{
\[\small\begin{tikzpicture}[remember picture, node distance=8mm and 16mm,>={Latex[length=2mm]},shorten >=1mm,align=center]

\node[](a){$G$ regular};
\node[below=of a](b){$G$ order-closed};
\node[below=of b](c){$G$ power-closed};

\node[right=of a](a1){$G'$ regular};
\node[below=of a1](b1){$G'$ order-closed};
\node[below=of b1](c1){$G'$ power-closed};
\node[right=10mm of b1](b2){$G'$ weakly\\order-closed};
\node[right=10mm of c1](c2){$G'$ weakly\\power-closed};

\node[above=of a1](mc){$G$ maximal class};

\node[right=50mm of mc](pf){$G$ powerful};
\node[below=of pf](pf1){$G'$ powerful};

\node[below=12mm of pf1,fill=black!10](x){$G$ has \property};

\draw[->] (a)--(b);
\draw[->] (b)--(c);
\draw[->] (a1)--(b1);
\draw[->] (b1)--(c1);
\draw[->] (a)--(a1);
\draw[->] (b)--(b1);
\draw[->] (c)--(c1);
\draw[->] (b1)--(b2);
\draw[->] (c1)--(c2);

\draw[->] (mc)--(a1);
\draw[->] (pf)--(a1);
\draw[->] (pf)--(pf1);

\draw[->] (b2)--(x);

\draw[->] (pf1)--(x);

\draw[->,decoration={markings, mark=at position 0.5 with
    { \draw[very thick,-] ++ (-5pt,-5pt) -- ( 5pt, 5pt); }
    },postaction={decorate}] (c2)--(x);
\begin{scope}[on background layer]
\draw[draw=black!10,ultra thick,transform canvas={yshift=-0.3cm}] (b.south west) -- (b.south west -| x.east);
\end{scope}
\end{tikzpicture}\]
}
\diagram
We also construct $p$-groups which do not have \property. Such groups have at least order $p^{p+3}$: we construct the examples as subgroups of the Sylow $p$-subgroup of the symmetric group $S_{p^4}$. 

It is an open question which power-closed groups, or which groups with a power-closed commutator subgroup, have \property. As it seems hard to find (small) $p$-groups without \property, their properties or even their classification might be an intriguing problem. Choices of commutators with different orders in such groups will yield normal origamis with isomorphic deck groups lying in different strata.

As an outlook, we consider surfaces that arise as infinite normal torus covers. These surfaces form a special class of infinite translation surfaces. We focus on surfaces with dense subgroups of profinite groups as deck groups, examples include origamis with the infinite dihedral group as deck group. After generalizing the definition of \property to profinite and pro-$p$ groups, we transfer results from \Cref{section: gp thy iso deck groups} on finite $p$-groups to these new situations. As in the finite case, the group-theoretic results have a geometric interpretation concerning the singularities of infinite translation surfaces. 

\textbf{Structure of the paper.} The article is organized as follows. In \Cref{section: geometric motivation}, we introduce essential definitions for the geometric point of view and explain the geometric motivation of the questions studied in the subsequent sections. 

Developing the group-theoretic results is subject of \Cref{section: results on p-groups}. In \Cref{section: gp thy for strata results}, we prove \Cref{intro-prop. exp(G')<n-1}. For this, we consider the prime $2$ separately because the results differ from those for odd primes. \Cref{section: gp thy iso deck groups} is concerned with the group-theoretic analogue of \Cref{thm-B}. More precisely, we introduce a group-theoretic property called \property and prove the above diagram of implications. We further construct examples of $p$-groups that do not have \property. 

The goal of \Cref{section: results on p-oris} is to translate the results for $p$-groups in \Cref{section: results on p-groups} into the language of $p$-origamis. In \Cref{subsection: strata of p-origamis}, we answer the question in which strata $p$-origamis occur by proving \Cref{thm-A}. The question whether the isomorphism class of the deck group determines the stratum of an origami is addressed in \Cref{section: isomorphic deck transformation groups}. \Cref{thm-B} is proven there. 

In \Cref{section infinite origamis}, we generalize some results from \Cref{section: results on p-groups} and \Cref{section: results on p-oris} to profinite and pro-$p$ groups, and infinite translation surfaces arising as normal torus covers, respectively.

\textbf{Acknowledgments.} We would like to thank Gabriela Weitze-Schmith\"usen, Alice Nie\-meyer, and Charles Leedham-Green for fruitful discussions involving origamis and $p$-groups as well as Gabriela Weitze-Schmith\"usen for careful proofreading. We are also grateful to Stephen Glasby, Charles Leedham-Green, and Wilhelm Plesken for helpful comments on an earlier version of this paper and to Dominik Bernhardt for discussions about \texttt{GAP} computations. The second author is thankful for the support of the National Science Foundation under Grant No.~1440140 during a research stay at the Mathematical Sciences Research Institute in Berkeley in the fall semester of 2019. Moreover, the second author would like to thank the German Academic Scholarship Foundation for supporting her doctoral studies.

Both authors are funded by the SFB-TRR 195 `Symbolic Tools in Mathematics and their Application’. This article is part of Project I.8.

\section{Connections between geometry and deck groups}\label{section: geometric motivation}

We begin by recalling the construction of translation surfaces and origamis. A translation surface is constructed from finitely many polygons in the Euclidean plane, where pairs of parallel edges (of the polygons) are identified by translations. For an origami, take finitely many copies of the unit square. Glue them along their edges via translations such that each left edge is glued to exactly one right edge and each upper edge to exactly one lower edge. The resulting surface is a translation surface. We require that the surface is connected. Otherwise one studies the connected components separately. Such a translation surface is called \textbf{origami} or \textbf{square-tiled surface}. An origami $\OO$ naturally defines a covering $\OO\to \T$ of the torus $\T$ ramified at most over one point denoted by $\infty$. The number of glued squares is the degree of the covering.

In this section, \cite{Zorich06} and \cite{HubertSchmidt} are used as main references for well-known facts about translation surfaces. Further, we refer the interested reader to \cite{Forster} for background knowledge about coverings. 
\begin{example}
	The following origami is a cover of the torus of degree 4. Edges with the same labels are identified.
	\begin{center}
    	\captionsetup{type=figure}
		\includegraphics[scale = 0.2]{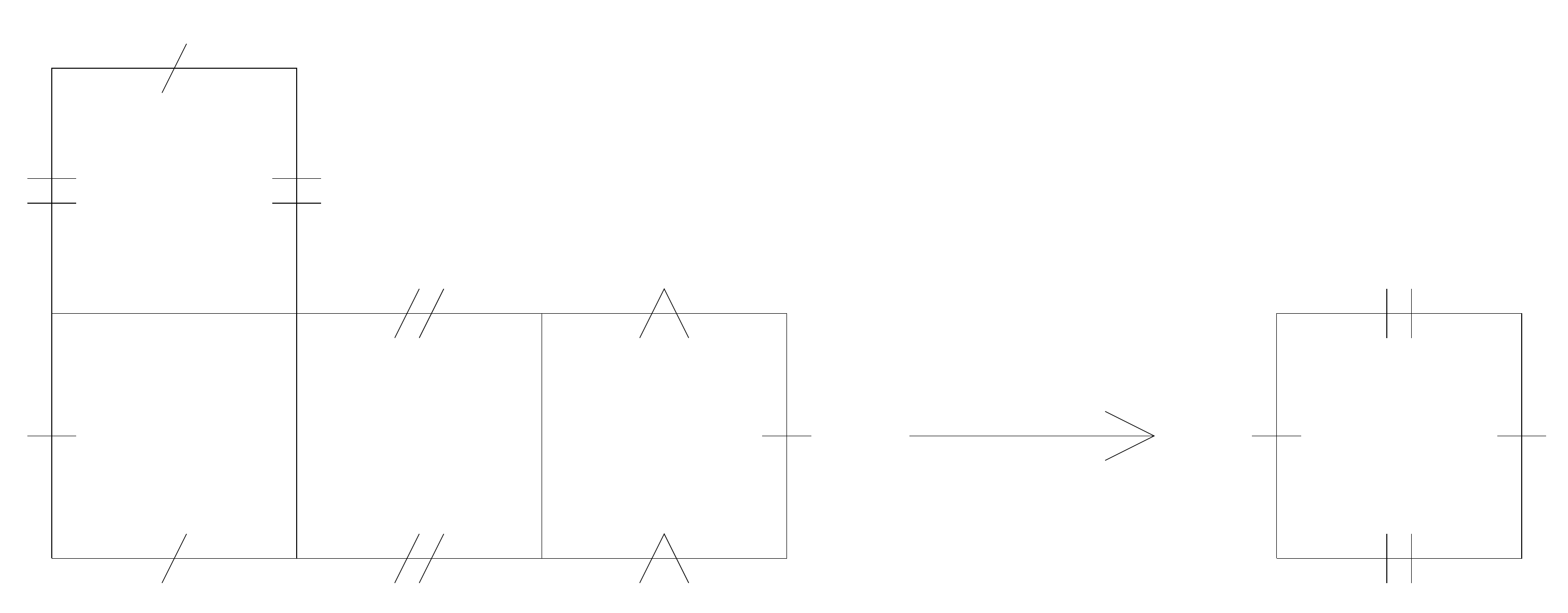}
		\caption{An example of an origami defining a torus cover which is not normal.}
	\end{center}
\end{example}

The concept of monodromy maps is essential for relating the stratum of a normal origami with group theoretic properties of its deck group. Let $c:\OO\to \T$ be the covering induced by an origami of degree $d$. Recall that its \textbf{deck transformation group} consists of all homeomorphisms $f:\OO\to\OO$ such that $c\circ f=c$. Consider the corresponding unramified cover of the punctured torus $c^*:\OO^*\to \T^*$, where $\OO^*=\OO\setminus c^{-1}(\infty)$ and $\T^*=\T\setminus\{\infty\}$. Recall that the fundamental group $\pi_1(\T^*)$ is the free group $F_2$ on two generators. 
We choose a base point $q$ on $\T^*$ and label the preimages of $q$ (under $c$) by $q_1,\dots,q_d$. Denote the simple closed horizontal and vertical curve passing through $q$ by $a$ and $b$, respectively. These two curves generate the fundamental group $F_2$. Then the \textbf{monodromy map} $m:F_2\to S_d,~w\mapsto \sigma_w$ is an anti-homomorphism defined as follows: A word $w\in F_2$ describes a path $f$ on $\T^*$. For $1\le i\le d$, one obtains a lifted path $\tilde{f}_i$ of $f$ on $\OO^*$ starting at $q_i$. Set $\sigma_w(i)\vcentcolon=j$, if the path $\tilde{f}_i$ ends at $q_j$. This defines a permutation $\sigma_w\in S_d$.

A \textbf{normal} (or \textbf{regular}) origami is an origami which is a normal cover of the torus, i.e., the deck transformation group acts transitively on the squares. Let $c:\OO\to \T$ be the covering induced by a normal origami. Then the degree of the cover is the order of the deck transformation group. We obtain a natural bijection between the squares of the tiling and the deck transformations by labeling a fixed square $S$ with the identity. A square $S'$ is now labeled with the unique deck transformation sending the square $S$ to $S'$. Using this bijection, the monodromy action is described by a map $m_G: F_2\to G, w\mapsto g_w$ such that the group homomorphism $G\to G, h\mapsto g_w\cdot h$ induces the permutation $m(w)$ on the squares of the tiling. Note that the deck group acts from the left on the origami $\OO$.

\begin{lemma}\label[lemma]{p-ori = 2-gen set}
	The following holds:
		\begin{enumerate}[(i)]
			\item A finite $2$-generator group $G$ together with an (ordered) pair of generators determines a normal origami with deck transformation group $G$.
			\item A normal origami is uniquely determined by its deck transformation group $G$ and the two deck transformations $m_G(a)$ and $m_G(b)$. 
	\end{enumerate}
\end{lemma}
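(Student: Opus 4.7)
The proof will essentially set up a bijection between normal origamis and $2$-generated finite groups equipped with an ordered pair of generators, with $(i)$ providing the forward direction and $(ii)$ the reverse.

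For (i), my plan is to construct the origami directly from combinatorial data. Given a finite $2$-generator group $G$ with chosen generators $(x,y)$, take $|G|$ copies of the unit square and label them by the elements of $G$. Define the gluing rule: for every $g\in G$, glue the right edge of square $S_g$ to the left edge of square $S_{xg}$, and the top edge of $S_g$ to the bottom edge of $S_{yg}$ (matching the left-action conventions fixed in the discussion preceding the lemma). Since these identifications match parallel edges of equal length by translation, the quotient is a translation surface covering $\T$, where the covering map collapses every square to the unit square of $\T$. I would verify connectedness by observing that because $\{x,y\}$ generates $G$, any $h\in G$ can be written as a word $w(x,y)$, and translating letters of $w$ into horizontal/vertical moves produces a path of squares from $S_e$ to $S_h$. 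Finally, I would check that left multiplication $\lambda_h\colon S_g\mapsto S_{hg}$ commutes with both gluing rules (since $(hg)x = h(gx)$ if the gluing were on the right; here with the left convention one checks $x(hg) = (xh)g$ carefully, noting that $G$ acts by deck transformations on $\OO$ and the horizontal/vertical monodromies act from the opposite side, so the two actions commute by associativity). This realises $G$ as the deck transformation group of a degree-$|G|$ normal cover.

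For (ii), my plan is to reverse the labeling procedure from \Cref{section: geometric motivation}. Given a normal origami with deck group $G$, a choice of base square $S$ yields a bijection between squares and elements of $G$: the square $g\cdot S$ is labeled $g$. Under this labeling, the two deck transformations $m_G(a),m_G(b)\in G$ describe exactly the horizontal and vertical gluings, because lifting the horizontal (resp.\ vertical) loop based at the centre of $S_g$ must land in $m_G(a)\cdot S_g$ (resp.\ $m_G(b)\cdot S_g$) by the definition of the monodromy. Since a normal origami is determined up to isomorphism by its combinatorial gluing pattern of squares, and this pattern is completely prescribed by the two permutations of $G$ given by left multiplication with $m_G(a)$ and $m_G(b)$, the origami is uniquely determined by the triple $(G,m_G(a),m_G(b))$.

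The main bookkeeping point, and the only place where one has to be careful, is ensuring that the ``left'' deck-group action really does commute with the monodromy permutations inherited from the fundamental group $\pi_1(\T^*)$; once the convention fixed in the paragraph before the lemma is pinned down, both parts of the statement are a direct application of covering space theory and the fact that $\{x,y\}$ generating $G$ is equivalent to connectedness of the resulting surface.
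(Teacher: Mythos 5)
Your overall strategy is the same as the paper's: an explicit square-by-square construction for (i) and a reconstruction argument for (ii). However, the one step you yourself single out as "the only place where one has to be careful" is exactly where your argument breaks. You glue by \emph{left} multiplication (the right neighbour of $S_g$ is $S_{xg}$) and simultaneously claim that the deck group acts by \emph{left} multiplication $\lambda_h\colon S_g\mapsto S_{hg}$. These are incompatible for non-abelian $G$: the map $\lambda_h$ sends the pair (square, right neighbour) $=(S_g,S_{xg})$ to $(S_{hg},S_{hxg})$, whereas the right neighbour of $S_{hg}$ under your gluing rule is $S_{xhg}$; so $\lambda_h$ respects the gluing only when $xh=hx$ and $yh=hy$, i.e.\ only for central $h$. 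Your parenthetical appeal to "the two actions commute by associativity" is valid only when the gluing and the deck action sit on \emph{opposite} sides of the group; associativity $h(gx)=(hg)x$ is precisely the statement that left multiplication commutes with right multiplication, not with itself. The paper avoids this by gluing on the right (the right and upper neighbours of $S_g$ are $S_{gx}$ and $S_{gy}$) while letting $G$ act on the left, consistently with the convention fixed just before the lemma. With your conventions, the actual deck group of the surface you build is the group of \emph{right} multiplications, which is isomorphic to $G$ but is not the action you named, and the same confusion propagates into (ii): the lift of the horizontal loop starting at $S_g$ ends at $S_{gx}$, not at $m_G(a)\cdot S_g=S_{xg}$.

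The fix is purely notational -- flip one of the two sides -- and the rest of your argument (connectedness from the fact that $\{x,y\}$ generates $G$, transitivity of the action giving normality, and the reconstruction in (ii) from the forced equivariant gluing pattern) then goes through and matches the paper's proof. But as written, the verification that $G$ acts by deck transformations fails for every non-abelian group, which is the case of interest throughout the paper.
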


\begin{proof}
	(i) Given a $2$-generator group $G$ of order $d$ together with generators $x,y$, we can construct a normal origami of degree $d$ as follows. Take $d$ squares labeled by the group elements. The right and upper neighbor of a square labeled by $g$ in $G$ is the one with label $gx$ and $gy$, respectively. This construction data defines an origami of degree $d$ with deck transformation group $G$. By definition, $G$ acts transitively on the squares and thus the cover is normal.
	
	(ii) Given a normal origami $\OO\to \T$ with deck transformation group $G$. Consider the deck transformations $x\vcentcolon=m_G(a)$ and $y\vcentcolon=m_G(b)$. These deck transformations represent passing from the square labeled by the identity element of $G$ to its right and upper neighbor, respectively. The deck group $G$ is generated by $x$ and $y$. The procedure described in (i) reconstructs the origami $\OO$ from the data $(G,x,y)$.
\end{proof}

Recall that the monodromy map is an anti-homomorphism and induces a left action of the deck group. In the construction described in the proof of \Cref{p-ori = 2-gen set}, we multiply the generators from the right to obtain compatibility with this left action.

\begin{example}\label[example]{eierlegende wollmilchsau}
	We consider the quaternion group $Q_8\vcentcolon= \left\langle  i, j, k~|~i^2=j^2=k^2=ijk \right\rangle $ with $(i,j)$ as the pair of generators. $Q_8$ can be viewed as a group of units in the quaternion division algebra, where $ijk=-1$. Following the construction described in \Cref{p-ori = 2-gen set} i), we obtain a $2$-origami $\mathcal{W}$ with deck transformation group $Q_8$. In the picture below, the corners of the squares with the same color are identified, i.e., the cone angles at these points are larger than $2\pi$.
	\begin{figure}[ht]
	\centering
	    \begin{tikzpicture}[remember picture,scale=0.9]

\node[scale=0.9] 
(1) at (-0.5,0.5) {$1$};
\node[scale=0.9] 
($i$) at (0.5,0.5) {$i$};
\node[scale=0.9] 
($-1$) at (1.5,0.5) {$-1$};
\node[scale=0.9] 
($-i$) at (2.5,0.5) {$-i$};

\node[scale=0.9] 
(j) at (-0.5,1.5) {$j$};
\node[scale=0.9] 
(-k) at (0.5,-0.5) {$-k$};
\node[scale=0.9] 
($-k$) at (2.5,-0.5) {$k$};
\node[scale=0.9] 
($-j$) at (1.5,1.5) {$-j$};


\path[draw] (0,0) -- (1,0) -- (1,1) -- (0,1) -- (0,0);
\path[draw] (1,0) -- (2,0) -- (2,1) -- (1,1);
\path[draw] (2,0) -- (3,0) -- (3,1) -- (2,1);
\path[draw] (0,0) -- (-1,0) -- (-1,1) -- (0,1);

\path[draw] (0,0) -- (0,-1) -- (1,-1) -- (1,0);
\path[draw] (2,0) -- (2,-1) -- (3,-1) -- (3,0);

\path[draw] (1,1) -- (1,2) -- (2,2) -- (2,1);
\path[draw] (0,1) -- (0,2) -- (-1,2) -- (-1,1);

\fill[darkblue] (0,0) circle (0.1);
\fill[darkblue] (2,0) circle (0.1);
\fill[darkblue] (2,2) circle (0.1);
\fill[darkblue] (0,2) circle (0.1);

\fill[orangedot] (1,0) circle (0.1);
\fill[orangedot] (3,0) circle (0.1);
\fill[orangedot] (1,2) circle (0.1);
\fill[orangedot] (-1,2) circle (0.1);
\fill[orangedot] (-1,0) circle (0.1);

\fill[gray] (0,1) circle (0.1);
\fill[gray] (2,1) circle (0.1);
\fill[gray] (0,-1) circle (0.1);
\fill[gray] (2,-1) circle (0.1);

\fill[red] (1,-1) circle (0.1);
\fill[red] (3,-1) circle (0.1);
\fill[red] (1,1) circle (0.1);
\fill[red] (3,1) circle (0.1);
\fill[red] (-1,1) circle (0.1);


\path[draw] (-1.1,0.5) -- (-0.9,0.5);
\path[draw] (2.9,0.5) -- (3.1,0.5);

\path[draw] (-0.1,-0.45) -- (0.1,-0.45);
\path[draw] (-0.1,-0.55) -- (0.1,-0.55);

\path[draw] (-.1,1.45) -- (.1,1.45);
\path[draw] (-.1,1.55) -- (.1,1.55);

\path[draw] (0.9,-0.5) -- (1.1,-0.5);
\path[draw] (0.9,-0.4) -- (1.1,-0.4);
\path[draw] (0.9,-0.6) -- (1.1,-0.6);
\path[draw] (0.9,1.5) -- (1.1,1.5);
\path[draw] (0.9,1.4) -- (1.1,1.4);
\path[draw] (0.9,1.6) -- (1.1,1.6);

\path[draw] (1.9,1.6) -- (2.1,1.5) -- (1.9,1.4);

\path[draw] (1.9,-.6) -- (2.1,-.5) -- (1.9,-.4);

\path[draw] (-.9,1.6) -- (-1.1,1.5) -- (-.9,1.4);

\path[draw] (3.1,-.6) -- (2.9,-.5) -- (3.1,-.4);


\path[draw] (-0.55,1.9) -- (-0.45,2.1);
\path[draw] (1.55,.1) -- (1.45,-.1);

\path[draw] (-0.6,-.1) -- (-0.5,.1);
\path[draw] (-0.5,-.1) -- (-0.4,.1);

\path[draw] (1.4,1.9) -- (1.5,2.1);
\path[draw] (1.5,1.9) -- (1.6,2.1);

\path[draw] (.45,1.1) -- (.55,.9);
\path[draw] (2.45,-.9) -- (2.55,-1.1);

\path[draw] (2.4,1.1) -- (2.5,.9);
\path[draw] (2.5,1.1) -- (2.6,.9);

\path[draw] (.4,-.9) -- (.5,-1.1);
\path[draw] (.5,-.9) -- (.6,-1.1);
\end{tikzpicture}
	    \caption{The Eierlegende Wollmilchsau is a $2$-origami of degree $8$ with $4$ singularities of cone angle $4\pi$.}
	    \label[figure]{fig:my_label}
	\end{figure}
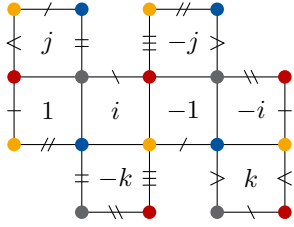
	This origami is called \textbf{Eierlegende Wollmilchsau}. It is a well-known and extensively studied example (see~\cite{HerSchmit}).
\end{example}

We call normal origamis $c_1:\OO_1\to \T$ and $c_2:\OO_2\to \T$ equal if there exists a homeomorphism $\alpha:\OO_1\to \OO_2$ such that $c_1=c_2\circ \alpha$. It is natural to ask when different pairs of generators for a given group describe the same origami. The following lemma answers this question. 

\begin{lemma}\label[lemma]{origamis are equal}
	Let $\mathcal{O}_1$ and $\mathcal{O}_2$ be normal origamis with deck transformation group $G$ defined by pairs of generators $(x_1,y_1)$ and $(x_2,y_2)$, respectively. Let  $m_i: F_2\to G$ with $m_i(a)=x_i$ and $m_i(b)=y_i$ denote the monodromy maps of $\mathcal{O}_i$ for $i=1,2$. Then the following are equivalent
		\begin{itemize}
			\item[(i)] the origamis $\mathcal{O}_1$ and $\mathcal{O}_2$ are equal,
			\item[(ii)] the kernels of the monodromy maps $m_1$ and $m_2$ are equal,
			\item[(iii)] there exists a group automorphism $\varphi:G\to G$ such that $(\varphi(x_1),\varphi(y_1))=(x_2,y_2)$.
	\end{itemize}
\end{lemma}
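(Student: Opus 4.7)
My plan is to organise the proof around two separate equivalences: (ii) $\Leftrightarrow$ (iii) as a purely group-theoretic statement, and (i) $\Leftrightarrow$ (iii) via the explicit reconstruction from \Cref{p-ori = 2-gen set}.

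For (iii) $\Rightarrow$ (ii), the idea is that if an automorphism $\varphi$ of $G$ sends $(x_1,y_1)$ to $(x_2,y_2)$, then $\varphi\circ m_1$ and $m_2$ agree on the free generators $a,b$ of $F_2$, and both are (anti-)homomorphisms out of $F_2$, so they coincide. Since $\varphi$ is a bijection, this forces $\ker(m_1)=\ker(m_2)$. For the converse (ii) $\Rightarrow$ (iii), I would set $N=\ker(m_1)=\ker(m_2)$; since $x_i,y_i$ generate $G$, each $m_i$ descends to an isomorphism $\overline{m_i}\colon F_2/N\xrightarrow{\sim} G$, and then $\varphi:=\overline{m_2}\circ\overline{m_1}^{-1}$ is the desired automorphism of $G$, satisfying $\varphi(x_1)=x_2$ and $\varphi(y_1)=y_2$ by construction.

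For (iii) $\Rightarrow$ (i), I would invoke \Cref{p-ori = 2-gen set} to label the squares of each $\OO_i$ bijectively by $G$ so that the right- and upper-neighbor of $g$ are $gx_i$ and $gy_i$. Mapping the square of $\OO_1$ labeled $g$ to the square of $\OO_2$ labeled $\varphi(g)$ via a translation defines a candidate homeomorphism $\alpha\colon\OO_1\to\OO_2$; compatibility with the gluings follows from $\varphi(g)x_2=\varphi(gx_1)$ and $\varphi(g)y_2=\varphi(gy_1)$, while $c_1=c_2\circ\alpha$ is automatic since every square projects to the single square of $\T$. Conversely, for (i) $\Rightarrow$ (iii), a homeomorphism $\alpha$ with $c_1=c_2\circ\alpha$ covers the identity of the torus, hence maps squares onto squares by translation. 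After post-composing with a suitable element of the deck group of $\OO_2$, I may arrange that $\alpha$ sends the identity square of $\OO_1$ to that of $\OO_2$; the induced permutation $\varphi\colon G\to G$ of labels is then a group homomorphism thanks to the equivariance $\alpha\circ h=\varphi(h)\circ\alpha$ (forced by $c_1=c_2\circ\alpha$), and the fact that $\alpha$ sends horizontal neighbors to horizontal neighbors and vertical to vertical yields $\varphi(x_1)=x_2$ and $\varphi(y_1)=y_2$.

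The most delicate part will be (i) $\Rightarrow$ (iii): one has to argue that an abstract homeomorphism $\alpha$ of covers is in fact cellular with respect to the square tilings and preserves the distinguished horizontal and vertical directions, and then translate this geometric rigidity into the algebraic statement that the induced permutation of $G$ is an automorphism matching both generators. All remaining implications reduce either to unwinding the construction of \Cref{p-ori = 2-gen set} or to the standard fact that maps out of a free group are pinned down by their values on a free basis.
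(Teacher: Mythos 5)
Your proposal is correct, but it organises the equivalence differently from the paper. The paper proves the single cycle (i) $\Rightarrow$ (ii) $\Rightarrow$ (iii) $\Rightarrow$ (i); you prove the two equivalences (ii) $\Leftrightarrow$ (iii) and (i) $\Leftrightarrow$ (iii). The implications you share with the paper, (ii) $\Rightarrow$ (iii) via $\varphi:=\overline{m_2}\circ\overline{m_1}^{-1}$ and (iii) $\Rightarrow$ (i) via relabelling squares, are essentially identical to the paper's. Where you diverge is in how condition (i) is connected to the others: the paper proves (i) $\Rightarrow$ (ii) by a soft path-lifting argument (a word $w\in\ker(m_1)$ lifts to loops in $\OO_1^*$, and $\alpha$ carries these to loops in $\OO_2^*$, so $w\in\ker(m_2)$), whereas you prove (i) $\Rightarrow$ (iii) directly by a geometric rigidity argument. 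The delicate step you flag is genuine but fillable: since $c_1=c_2\circ\alpha$ and the projection to the torus is injective on each open square, $\alpha$ is forced to be a translation in each square, hence cellular and direction-preserving, and $h\mapsto\alpha\circ h\circ\alpha^{-1}$ is the required automorphism of the deck group (after normalising $\alpha$ on the identity square by a deck transformation of $\OO_2$, which only changes $\varphi$ by an inner automorphism). Your route costs one extra implication ((iii) $\Rightarrow$ (ii), which is easy) and a harder geometric step, but in exchange it makes the purely group-theoretic equivalence (ii) $\Leftrightarrow$ (iii) self-contained and extracts the automorphism directly from the homeomorphism, which is conceptually transparent; the paper's cycle is shorter and avoids having to argue that $\alpha$ respects the tiling.
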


\begin{proof}
	We begin by showing that (i) implies (ii). Let $\alpha:\OO_1\to \OO_2$ be a homeomorphism such that $c_1=c_2\circ \alpha$. Recall that an element $w\in F_2$ defines a path $f$ on $\T^*$ passing through a chosen base point $q_1$. If $w$ lies in the kernel of $m_1$, each lift $\tilde{f}_i$ on $\OO_1^*$ of $f$ starts and ends at the same preimage of $q_1$. Further, each of the paths $\tilde{f}_i$ on $\OO_1^*$ induces a path $\alpha(\tilde{f}_i)$ which also has the same start and end point. This implies that $w$ lies in the kernel of $m_2$. Hence we obtain the inclusion $\ker(m_1)\subseteq \ker(m_2)$. Using the inverse $\alpha^{-1}$, a similar argument shows the inclusion $\ker(m_2)\subseteq \ker(m_1)$.
	
	Now we show that (ii) implies (iii). Suppose that the kernels of $m_1$ and $m_2$ are equal. We want to define a group isomorphism $\varphi: G\to G$ such that the following diagram commutes
	\begin{align*}
	\xymatrix{
		F_2 \ar[d]_-{\id} \ar[r]^-{m_{1}} & G\ar@{-->}[d]^-{\varphi} \\
		F_2 \ar[r]^-{m_2} & G.
	}
	\end{align*}
	Let $K$ denote the kernel $\ker(m_1)$. Since the kernels of $m_1$ and $m_2$ coincide, we obtain isomorphisms $\overline{m}_i:F_2/K \to G$ with $\overline{m}_i(\overline{a})=x_i$ and $\overline{m}_i(\overline{b})=y_i$ for $1\le i\le 2$. Define $\varphi$ as the composition $\overline{m}_2\circ(\overline{m}_1)^{-1}$. Then $\varphi$ is an isomorphism such that $(\varphi(x_1),\varphi(y_1))=(x_2,y_2)$.
	
	Finally, we assume statement (iii). Let $\varphi:G\to G$ be an automorphism such that $(\varphi(x_1),\varphi(y_1))=(x_2,y_2)$. Sending a square in the tiling of $\OO_1$ labeled by the deck transformation $g$ to the square in the tiling of $\OO_2$ labeled by $\varphi(g)$ defines a a map $\alpha: \OO_1\to \OO_2$. Observe that $\alpha$ maps neighboring squares in $\OO_1$ to neighboring squares in $\OO_2$ and thus $\alpha$ is a well-defined homeomorphism. The equation $c_1=c_2\circ\alpha$ implies the equality of the origamis.
\end{proof}

\begin{example}
	Recall that the automorphism group of the quaternion group $Q_8$ is isomorphic to the symmetric group $S_4$ and thus has order $24$. We further know that there are exactly $24$ pairs of generators $(x,y)$. Denote the set of generators by $E$. The restriction on $x,y$ is that $\{x,y\}\subseteq\{\pm i,\pm j,\pm k \}$ is a subset of size two not containing $\pm a$ for $a$ in $Q_8$. The automorphism group $\aut(Q_8)$ acts on $E$ by applying an automorphism to both components. Note that for each pair $(x,y)\in E$, the stabilizer under this action is trivial. Hence $\aut(Q_8)$ acts transitively on $E$, i.e., for any two pairs of generators $(x_1,y_1),~(x_2,y_2)$, we find an automorphism $\varphi$ of $Q_8$ such that $(\varphi(x_1),\varphi(y_1))$ is equal to $(x_2,y_2)$. By \Cref{origamis are equal}, the only $2$-origami with deck transformation group $Q_8$ is the Eierlegende Wollmilchsau (see~\Cref{eierlegende wollmilchsau}).
\end{example}

From now on, we denote a normal origami with deck transformation group $G$ defined by the pair of generators $(x,y)$ by $(G,x,y)$.

In the following, we introduce the essential definitions for this section.

\begin{definition}~
	\begin{enumerate}[\textbullet]
		\item For an origami $\mathcal{O}$ and a point $x\in\OO$ the cone angle at $x$ is $2\pi a$ for some natural number $a$. The \textbf{multiplicity} of $x$ is defined as $a$ and denoted by $\mult(x)$. A \textbf{singularity} $s\in \OO$ is a point with multiplicity larger than 1. Let $\Sigma$ denote the set of singularities.
		
		\item Let $(a_1,\ldots,a_m),(k_1,\ldots,k_m)\in \N^m$. The \textbf{stratum} $\HH({k_1}\times a_1,\ldots,{k_m}\times a_m)$ is the set of translation surfaces with $k_i$ singularities of multiplicity $a_i+1$ for $1\le i\le m$. The set of translation surfaces without a singularity, i.e., the set of tori, is denoted by $\HH(0)$. 
	\end{enumerate}
\end{definition}

Each translation surface $X$ comes with a natural holomorphic 1-form on $X$. The zeros of this 1-form are the singularities. If the order of a zero is $a$, the multiplicity of the singularity is $a+1$.

Since the squares in the tiling of an origami are glued by translations, only corners of the squares can appear as singularities. In particular, there are only finitely many singularities. We begin with a remark showing that all singularities of a normal origami have the same multiplicity. That gives a first restriction on the strata.

\begin{rem}\label[remark]{all singularities have same order}
	The deck transformation group of a normal origami acts transitively on the squares of the origami. Hence all singularities have the same multiplicity, and lie in a stratum of the form $\HH\left(0\right)$ or $\HH\left(k\times a\right)$ for $a,k\in\N$. Further, a normal origami with deck transformation group $G$ and set of singularities $\Sigma$ with $s\in \Sigma$ satisfies the following equation
	$$\sum_{s'\in \Sigma}\mult(s')=|\Sigma|\cdot \mult(s)=|G|.$$
	
	Note that either all corner points of a normal origami are singularities or all corner points are regular points.
\end{rem}

With the help of the next lemma we connect the multiplicity of singularities to a statement phrased in the language of group theory. For elements $x,y$ of a group, we denote their commutator $x\inv y\inv xy$ by $[x,y]$.

\begin{lemma}\label[lemma]{connection order singularities and commutator}
	Let $\mathcal{O}=(G,x,y)$ be a normal origami. The cover of the torus induced by the origami is unramified if and only if $[x,y]=1$. If the cover is ramified, the multiplicity of each singularity of $\mathcal{O}$ coincides with the order of $[x,y]$ in $G$.
\end{lemma}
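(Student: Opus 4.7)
The plan is to trace the cycle of squares encountered when rotating once around a corner of the square tiling and to identify the resulting element of $G$ as a conjugate of $[x,y]$. By \Cref{all singularities have same order}, every corner of a normal origami is either regular or singular with the same multiplicity, so it is enough to analyse the local picture at a single well-chosen corner; both claims of the lemma then follow at once from this local computation.

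For this, I fix a corner point $p$ and identify it with the bottom-right corner of the square labeled by an arbitrary element $g \in G$. Using the gluing convention from \Cref{p-ori = 2-gen set} (the right neighbour of a square $h$ is $hx$ and the upper neighbour is $hy$, so the left neighbour is $hx^{-1}$ and the lower neighbour is $hy^{-1}$), I determine the three other squares that meet $p$ locally. Starting inside the square $g$ and going counter-clockwise through the four quadrants around $p$, one successively crosses into the squares labeled $g y^{-1}$, $g y^{-1} x$, and $g y^{-1} x y$; after one full rotation of $2\pi$ one re-enters the north-west quadrant of $p$, but now inside the square labeled $g \cdot w$, where
$$w \;=\; y^{-1} x y x^{-1}.$$
This geometric computation is the core input of the proof.

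The second step is purely group-theoretic and rests on the identity
$$x \,[x,y]\, x^{-1} \;=\; x \,(x^{-1} y^{-1} x y)\, x^{-1} \;=\; y^{-1} x y x^{-1} \;=\; w,$$
so $w$ is conjugate to $[x,y]$ in $G$ and in particular has the same order. Iterating the $2\pi$ rotation, after $k$ rotations one sits in the square $g \cdot w^k$; the cone angle at $p$ is therefore $2\pi\cdot k_0$ for $k_0$ the smallest positive integer with $w^{k_0} = 1$, i.e.\ $k_0 = \ord(w) = \ord([x,y])$. Consequently every singularity has multiplicity $\ord([x,y])$, and the cover is unramified precisely when this order is $1$, that is, when $[x,y]=1$.

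I expect the only real obstacle to be bookkeeping: since by \Cref{p-ori = 2-gen set} the monodromy acts on squares from the left while neighbours are obtained by multiplying generators on the right, one must be careful with the direction of the rotation and with the order of the factors so that the element produced at the end comes out as a conjugate of $[x,y]$ and not, for instance, of $[y,x]$ or of $[x^{-1},y]$. Fixing once and for all one corner (the bottom-right) and one direction of rotation (counter-clockwise) keeps this overhead manageable and yields the stated formula on the nose.
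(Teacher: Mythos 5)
Your proof is correct and takes essentially the same approach as the paper: the paper traces the squares around the lower-left corner of the square labeled $1$ and obtains $[x,y]$ on the nose as the element effecting one full $2\pi$ rotation, then iterates and invokes \Cref{all singularities have same order}. Your choice of the bottom-right corner instead produces the conjugate $w=x[x,y]x^{-1}$, which you correctly identify as having the same order, so the two arguments differ only in this bookkeeping.
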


\begin{proof}
	Let $S$ denote the square labeled by the group element $1$. Then the deck transformation $[x,y]=x\inv y\inv xy$ sends the square $S$ to the one lying $2\pi$ (with respect to the lower left corner of $S$) above $S$.
    \begin{center}
	    \captionsetup{type=figure}
	    \begin{tikzpicture}[remember picture,scale=1.6]


\node[scale=0.9] 
(1) at (-0.5,0.5) {$x^{-1}y^{-1}$};
\node[scale=0.9] 
($i$) at (0.5,0.5) {$x^{-1}y^{-1}x$};
\node[scale=0.9] 
($i$) at (0.5,1.5) {$1$};
\node[scale=0.9] 
($i$) at (2,1.5) {$[x,y]$};
\node[scale=0.9] 
(j) at (-0.5,1.5) {$x^{-1}$};


\path[draw] (0,0) -- (1,0) -- (1,.95);
\path[draw] (0,1) -- (0,0);
\path[draw] (0,0) -- (-1,0) -- (-1,1) -- (0,1);
\path[draw] (0,1) -- (0,2) -- (-1,2) -- (-1,1);
\path[draw] (0,1) -- (1,1.05) -- (1,2) -- (0,2) -- (0,1);

\path[draw] (2.5,1) -- (2.5,2) -- (1.5,2) -- (1.5,1);

\draw[dashed, ultra thick, greenline] (1,.95) -- (0,1);
\draw[dashed, ultra thick, greenline] (2.5,1) -- (1.5,1);

\fill[darkblue] (0,1) circle (0.1);
\fill[darkblue] (1.5,1) circle (0.1);



\end{tikzpicture}
        \caption{The deck transformation $[x,y]$ maps the square labeled by $1$ to the square labeled by $[x,y]$.}
    \end{center}

    Hence the deck transformation $[x,y]^m$ sends the square $S$ to the one lying $2\pi m$ above $S$ for $m\in \N$. We conclude with \Cref{all singularities have same order} that the cone angle at each corner is $2\pi\cdot~$ord$([x,y])$.
\end{proof}

\begin{rem}\label[remark]{rem: connection stratum - 2-gen gps}
	By the lemma above and \Cref{p-ori = 2-gen set}, finding a normal origami of degree $d=(a+1)k$ in the stratum $\HH(k\times a)$ is equivalent to finding a $2$-generated group of order $d$ and a generating set of size two such that the commutator of the generators has order $k$.
\end{rem}

\begin{definition}
	Let $p$ be a prime number. An origami is called \textbf{$p$-origami} if it is normal and the deck transformation group of the corresponding cover is a finite $p$-group.
\end{definition}

We will study in which strata $p$-origamis occur.
\begin{rem}\label[remark]{all singularities have same order - p group}
	By \Cref{all singularities have same order}, all singularities of a $p$-origami have the same multiplicity. Each $p$-origami outside the stratum $\HH(0)$ satisfies the equation $d=(a+1)\cdot k$, where $d$ is the degree, $a+1$ is the multiplicity of each singularity, and $k$ is the number of singularities.
	
	This reduces the possible strata significantly. All $p$-origamis of degree $p^n$ (outside the stratum $\HH(0)$) have $p^{n-k}$ singularities of multiplicity $p^k$ for $1\le k\le n$, i.e., they lie in strata of the form $\HH\left( {p^{n-k}}\times\left( p^k-1\right) \right) $.
\end{rem}

In \Cref{section: results on p-oris}, we will use the connection between group theory and the type of singularities of normal $p$-origamis to derive conclusions about the stratum they lie in.

\section{Results on \texorpdfstring{$p$}{p}-groups}\label{section: results on p-groups}

\subsection{Bounds for the exponent of commutator subgroups of \texorpdfstring{$p$}{p}-groups}\label{section: gp thy for strata results}

The geometric setting in \Cref{section: geometric motivation} motivates the study of the following problem. Given a finite $p$-group $G$ of order $p^n$, find a bound for the order of commutators $[x,y]$ with $x,y\in G$. In this section, we answer a more general question. We give a sharp bound for the exponent of the commutator subgroup. This bound is different for the prime 2 and odd primes.

We recall some basic definitions and facts from the theory of $p$-groups. Let $G$ be a finite $p$-group.
The \textbf{order} $|G|$ of $G$ is the number of its elements.
For any element $x\in G$, the \textbf{order} $\ord(x)$ of $x$ is the smallest positive integer $n$ such that $x^n=1$.
The \textbf{exponent} $\exp(G)$ of $G$ is the greatest order of any element in $G$.
The \textbf{commutator subgroup} $G'$ of $G$ is the subgroup generated by all commutators
$$
  [x,y] = x\inv y\inv x y
  \quad\text{for }x,y\in G .
$$
The \textbf{center} $Z(G)$ of $G$ is the subgroup $\{x\in G ~|~ xy=yx \ \forall y\in G\}$.
The \textbf{Frattini subgroup} $\Phi(G)$ of $G$ is the intersection of all maximal subgroups of $G$.
For each $i\in \N$, we define the \textbf{omega} and \textbf{agemo} subgroup,
\begin{align*}
  \Omega_i(G)&\vcentcolon=\langle g ~|~g\in G, g^{p^i}=1\rangle,\\
  \mho^i(G)&\vcentcolon=\langle g^{p^i} ~|~g\in G \rangle.
\end{align*}

\begin{lemma}[{\cite[Proposition 1.2.4]{LGMcK}}]\label[lemma]{frattini group props}
	Let $G$ be a finite $p$-group.
		\begin{itemize}
			\item The Frattini subgroup $\Phi(G)$ equals $G'\mho^1(G)$, the group generated by all commutators and $p$-th powers. In particular, $G/\Phi(G)$ is elementary abelian (that is, abelian and of exponent $p$).
			\item Burnside's basis theorem: A set of elements of $G$ is a (minimal) generating set if and only if the images in $G/\Phi(G)$ form a (minimal) generating set of $G/\Phi(G)$. In particular, every generating set for $G$ contains a generating set with exactly $d(G)$ elements, where $d(G)$ is the rank of the elementary abelian quotient $G/\Phi(G)$.
		\end{itemize}
\end{lemma}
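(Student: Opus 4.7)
The plan is to prove the two parts in sequence, using the standard fact that in a finite $p$-group every maximal subgroup is normal of index $p$. First I would recall (or prove quickly, via the class equation applied to the action of $G$ on the cosets of a maximal subgroup $M$, combined with $M \subseteq N_G(M)$) that every maximal subgroup $M$ of $G$ is normal with $[G:M]=p$. Then $G/M$ is cyclic of order $p$, hence abelian and of exponent $p$, so both $G'\subseteq M$ and $\mho^1(G)\subseteq M$. Taking the intersection over all maximal $M$ yields $G'\mho^1(G)\subseteq \Phi(G)$.

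For the reverse inclusion, I would observe that the quotient $Q=G/(G'\mho^1(G))$ is abelian (because we quotient by $G'$) and every element has order dividing $p$ (because we quotient by $\mho^1(G)$); thus $Q$ is an elementary abelian $p$-group, i.e., an $\F_p$-vector space. In such a vector space, the intersection of all maximal subspaces (codimension-one hyperplanes) is trivial, and lifting these hyperplanes to $G$ produces maximal subgroups whose intersection is $G'\mho^1(G)$. Hence $\Phi(G)\subseteq G'\mho^1(G)$, giving equality, and the same computation shows $G/\Phi(G)$ is elementary abelian.

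For Burnside's basis theorem, one direction is trivial: if $x_1,\dots,x_n$ generate $G$, their images generate $G/\Phi(G)$. For the converse, suppose the images $\overline{x}_1,\dots,\overline{x}_n$ generate $G/\Phi(G)$, and set $H\vcentcolon=\langle x_1,\dots,x_n\rangle$; then $H\Phi(G)=G$. If $H\subsetneq G$, then $H$ would lie in some maximal subgroup $M$, but $M\supseteq \Phi(G)$, so $H\Phi(G)\subseteq M\subsetneq G$, a contradiction. Hence $H=G$. Finally, since $G/\Phi(G)$ is an $\F_p$-vector space of dimension $d(G)$, any generating set of it contains a basis of size $d(G)$; lifting such a basis produces a generating set of $G$ of size $d(G)$ inside any given generating set.

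The only genuinely delicate step is the initial fact that maximal subgroups of $p$-groups are normal of index $p$; everything else is linear algebra over $\F_p$ applied to the canonical quotient. I expect no real obstacle beyond citing that normality fact cleanly (for instance by induction on $|G|$, using that $Z(G)$ is nontrivial in any finite $p$-group and passing to $G/Z(G)$ when $Z(G)\not\subseteq M$).
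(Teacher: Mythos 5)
The paper does not prove this lemma at all: it is quoted directly from the cited reference \cite[Proposition 1.2.4]{LGMcK}, so there is no in-paper argument to compare yours against. Your proof is the standard textbook argument and is correct: maximal subgroups of a finite $p$-group are normal of index $p$ (hence contain $G'$ and $\mho^1(G)$), the quotient $G/G'\mho^1(G)$ is an $\F_p$-vector space whose hyperplanes pull back to maximal subgroups with trivial common intersection, and the non-generator property of $\Phi(G)$ gives Burnside's basis theorem. The only point you leave implicit is the ``minimal'' clause of Burnside's theorem (that a generating set of $G$ is minimal iff its image in $G/\Phi(G)$ is a minimal generating set, i.e.\ a basis), but this follows immediately from the equivalence you do prove, applied to proper subsets.
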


This can be used to establish first bounds for the exponent of a $p$-group which holds for all prime numbers.

\begin{proposition}\label[proposition]{prop. exp(G')<n-1}
    For a finite $p$-group $G$ of order $p^n$, $\exp(G')=1$ if $n\leq 2$ or otherwise $$\exp(G')\le p^{n-2}.$$
\end{proposition}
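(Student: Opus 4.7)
The plan is to use the fact that $G'\subseteq \Phi(G)$ together with a case split on whether $G$ is cyclic, leveraging \Cref{frattini group props}.

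First, I handle the small cases $n\le 2$. If $|G|\le p$, the group is cyclic, hence abelian, so $G'=1$. If $|G|=p^2$, a standard fact (which follows, for instance, from class equation considerations showing $Z(G)$ is nontrivial, forcing $G/Z(G)$ cyclic and thus $G$ abelian) gives that $G$ is abelian, so again $G'=1$ and $\exp(G')=1$.

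For $n\ge 3$, I split on whether $G$ is cyclic. If $G$ is cyclic, it is abelian and $G'=1$, so $\exp(G')=1\le p^{n-2}$ trivially. Otherwise, $G$ requires at least two generators, so $d(G)\ge 2$. By Burnside's basis theorem (\Cref{frattini group props}), the quotient $G/\Phi(G)$ is elementary abelian of rank $d(G)\ge 2$, hence
\[
  |G/\Phi(G)| = p^{d(G)} \ge p^2,
\]
which yields $|\Phi(G)|\le p^{n-2}$. Since $\Phi(G)=G'\mho^1(G)\supseteq G'$ by \Cref{frattini group props}, we obtain $|G'|\le |\Phi(G)|\le p^{n-2}$. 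The exponent of any finite group divides (and is therefore bounded by) its order, so
\[
  \exp(G')\le |G'|\le p^{n-2},
\]
finishing the proof.

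There is no real obstacle here; the key observation is simply that in the non-cyclic case, $G'$ sits inside $\Phi(G)$, which has index at least $p^2$, and this index bound controls the entire exponent. The bound should also be sharp, as witnessed for instance by a cyclic group $C_{p^{n-2}}$ sitting as the commutator subgroup inside a suitable extension (e.g., a semidirect product of $C_{p^{n-2}}$ with $C_p\times C_p$), which is the kind of construction the paper pursues in its later sharpness results.
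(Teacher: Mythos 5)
Your proof is correct and follows essentially the same route as the paper's: the inclusion $G'\subseteq\Phi(G)$ combined with Burnside's basis theorem forcing $|G/\Phi(G)|\geq p^2$ in the non-cyclic case, with the exponent bounded by the order of $G'$. One caveat on your closing aside only: the bound $p^{n-2}$ is sharp for $p=2$ (\Cref{constr of 2-gps}), but for odd primes it is far from sharp, since the paper later proves $\exp(G')^2<|G|$ (\Cref{thm bound for exp(G')}), so no construction of the kind you sketch can realize $\exp(G')=p^{n-2}$ for odd $p$ and $n\geq 5$.
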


\begin{proof}
    Any $p$-group of order $p$ is cyclic, in particular, abelian. Hence, $\exp(G')=1$, in this case.
   
	Suppose $G$ is a non-cyclic $p$-group of order $p^n$ with a minimal generating set of length $d\geq 2$. By Burnside's basis theorem, we obtain $|G/\Phi(G)|= p^d$ and thus $|\Phi(G)|= p^{n-d}$. The inclusion $G' \subseteq \Phi(G)$ implies the inequality $|G'|\le p^{n-d}$. In particular, the inequality $\exp(G')\le p^{n-d}\le p^{n-2}$ holds.
\end{proof}

\subsubsection{2-groups}
In this section, we show that the bound in \Cref{prop. exp(G')<n-1} is sharp for the prime 2. What is more, we construct $2$-generated $2$-groups with certain generators whose commutator has the desired order. These groups will be used to construct $2$-origamis in \Cref{section: strata of $2$-origamis}. We need the following lemma.

\begin{lemma}[{\cite[Hilfssatz III 1.11 a)]{hup}}] \label[lemma]{commutator-generated}
    For a group $G$ generated by a subset $S$, the commutator subgroup $G'$ is generated by the set $\{g\inv{}[x,y]g~|~x,y\in S,g\in G \}$.
\end{lemma}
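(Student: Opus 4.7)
The plan is to define $N$ as the subgroup of $G$ generated by the set $T \vcentcolon= \{g\inv[x,y]g : x,y \in S,\ g \in G\}$ and prove that $N = G'$ by showing both inclusions. The forward inclusion $N \subseteq G'$ is immediate: each generator $[x,y]$ of $T$ lies in $G'$, and $G'$ is a normal subgroup, so every conjugate $g\inv[x,y]g$ also lies in $G'$. Hence $T \subseteq G'$, which gives $N \subseteq G'$.

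For the reverse inclusion $G' \subseteq N$, I would first note that $N$ is normal in $G$ by construction, since the generating set $T$ is closed under conjugation by arbitrary elements of $G$ (conjugating $g\inv[x,y]g$ by $h \in G$ yields $(gh)\inv[x,y](gh) \in T$). Thus the quotient $G/N$ is a well-defined group. Next, for every $x,y \in S$, we have $[x,y] \in N$, so the images $\overline{x},\overline{y}$ in $G/N$ satisfy $[\overline{x},\overline{y}] = 1$, i.e.\ they commute. Since the canonical projection is surjective and $S$ generates $G$, the images of the elements of $S$ generate $G/N$. A group generated by pairwise commuting elements is abelian, so $G/N$ is abelian, which forces $G' \subseteq N$.

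Combining the two inclusions yields $N = G'$, which is exactly the claim. The argument is short and relies on no subtle computation — the only point requiring care is verifying that the generating set $T$ is indeed closed under conjugation, so that $N$ is automatically normal without invoking any auxiliary commutator identities (such as $[xy,z] = [x,z]^y[y,z]$). I do not anticipate a genuine obstacle; the key conceptual step is the passage to the quotient $G/N$, where the problem reduces to the trivial observation that a group generated by commuting elements is abelian.
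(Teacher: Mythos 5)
Your proof is correct. Note that the paper itself offers no proof of this lemma --- it is quoted directly from Huppert (Hilfssatz III 1.11 a)) as a known fact --- so there is no in-paper argument to compare against. Your argument is the standard one: the generating set $T$ is visibly closed under conjugation, so $N=\langle T\rangle$ is normal, the images of the elements of $S$ commute in $G/N$ and generate it, hence $G/N$ is abelian and $G'\subseteq N$; the reverse inclusion follows from normality of $G'$. All steps check out, including the small point that a group generated by pairwise commuting elements is abelian (commutation of generators passes to their inverses and hence to arbitrary words).
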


\begin{proposition}\label[proposition]{constr of 2-gps}
     Let $n,k\in \NN$ with $n>2$ and $k\le n-2$. There exists a $2$-generated $2$-group $G$ of order $2^n$ with generators $x,y\in G$ such that
     $$\ord([x,y])=\exp(G')=2^k.$$
\end{proposition}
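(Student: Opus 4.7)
The plan is to exhibit $G$ explicitly as a metacyclic $2$-group, namely the semidirect product $C_{2^{k+1}} \rtimes C_{2^{n-k-1}}$ in which the second factor acts on the first by inversion. Concretely, I would set
$$G = \langle a,b \mid a^{2^{k+1}} = b^{2^{n-k-1}} = 1,\ bab^{-1} = a^{-1}\rangle,$$
take $x=a$, $y=b$, and then verify the four required properties in turn. The hypothesis $k \leq n-2$ ensures $n-k-1 \geq 1$, so the inversion automorphism of $C_{2^{k+1}}$ (which has order at most $2$) lifts to an action of $\langle b\rangle$, making the semidirect product well-defined of order $2^{k+1}\cdot 2^{n-k-1} = 2^n$; by construction, $\{a,b\}$ is a generating set, so $G$ is a $2$-generated $2$-group of the required order.

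Next I would compute the commutator directly: the relation $bab^{-1}=a^{-1}$ implies $b^{-1}ab = a^{-1}$ as well, so $[a,b] = a^{-1}b^{-1}ab = a^{-2}$, which has order $2^k$ in $\langle a\rangle$ (including the degenerate case $k=0$, where $a^2 = 1$ makes $[a,b]=1$). To identify $\exp(G')$, I would observe that $G'$ is the normal closure of $[a,b]=a^{-2}$; since $b$ acts by inversion, $b a^{-2} b^{-1} = a^2$, so $G' = \langle a^2\rangle$ is cyclic of order $2^k$, yielding $\exp(G') = 2^k$ as required.

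The construction degenerates gracefully at both endpoints of the range: for $k = n-2$ it is the dihedral group $D_{2^n}$, and for $k = 0$ the action of $b$ on the order-$2$ group $\langle a\rangle$ is trivial, giving the abelian group $C_2 \times C_{2^{n-1}}$. I do not expect any real obstacle, since each verification is mechanical once the right presentation is fixed; the only thing worth pausing on is the observation that a single uniform family of semidirect products already realizes every value of $\exp(G')$ allowed by \Cref{prop. exp(G')<n-1}. In particular, combining with \Cref{commutator-generated} is not necessary here because $G'$ is already exhibited as a cyclic subgroup generated by the conjugates of $[a,b]$.
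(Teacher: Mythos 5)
Your construction is exactly the one in the paper: the semidirect product $C_{2^{k+1}}\rtimes C_{2^{n-k-1}}$ with the inversion action, the same generators, the computation $[a,b]=a^{-2}$, and the identification $G'=\langle a^2\rangle$ of order $2^k$. If anything, your justification that the normal closure of $a^{-2}$ is $\langle a^2\rangle$ (via $ba^{-2}b^{-1}=a^2$) is slightly more careful than the paper's, which asserts that $r^{-2}$ is central even though conjugation by $s$ inverts it; the conclusion is unaffected since the conjugates still lie in $\langle r^2\rangle$.
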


\begin{proof}
    Let $k$ be a natural number with $0\le k\le n-2$. We construct a group $G^2_{(n,k)}$ of order $2^n$ and a pair of generators $r,s$ whose commutator is of order $2^k$. The group $G^2_{(n,k)}$ is a semidirect product of two cyclic groups $C_{2^{k+1}}=\langle r \rangle$ and $C_{2^{n-k-1}}=\langle s \rangle$ of order $2^{k+1}$ and $2^{n-k-1}$, respectively. First, define the group automorphism $\alpha\colon C_{2^{k+1}}\to C_{2^{k+1}},~ r^m\mapsto r^{-m}$. Since $\alpha^{2^{n-k-1}}$ is the identity map on $C_{2^{k+1}}$, the map
	$$\varphi\colon C_{2^{n-k-1}}\to \aut(C_{2^{k+1}}),~ s^m \mapsto \alpha^m$$
	is a group homomorphism. Let $G_{(n,k)}^2$ be the semidirect product
	$$C_{2^{k+1}} \rtimes_\varphi C_{2^{n-k-1}}= \langle r,s ~|~ r^{2^{k+1}}=s^{2^{n-k-1}}=1,~s\inv rs=r\inv  \rangle.$$
	Then $G^2_{(n,k)}$ has order $2^n$.	Using the defining relations of $G^2_{(n,k)}$, we conclude
	$$[r,s]=r\inv s\inv rs=r^{-2}.$$
	Hence the commutator $[r,s]$ has order $2^{k}$.
	
	Finally we show that the commutator subgroup $(G^2_{(n,k)})'$ has exponent $2^k$. Since $G^2_{(n,k)}$ is generated by $\{r,s\}$, by \Cref{commutator-generated}, the commutator subgroup is generated by elements of the form $g\inv [r,s]g$ for $g\in G^2_{(n,k)}$. As $[r,s]=r^{-2}$ is central, each of these elements is contained in $\langle r^2 \rangle$. Hence $(G^2_{(n,k)})'$ is cyclic of order $2^k$.
\end{proof}

Note that all the $2$-groups constructed in the proof of \Cref{constr of 2-gps} are semidirect products of two cyclic groups.

\subsubsection{\texorpdfstring{$p$}{p}-groups for odd primes \texorpdfstring{$p$}{p}}

Throughout this section, let $p$ denote an odd prime. In \Cref{thm bound for exp(G')}, we introduce a much stronger bound on the exponent of the commutator subgroup which holds for odd primes. This is a generalization of a theorem by van der Waall (see~\cite[Theorem 1]{Waall}). There, the order of the commutator subgroup of finite $p$-groups is bounded under the condition that the commutator subgroup is cyclic.

Subsequently, we show in \Cref{group theory: strata p-origamis for p>2} that this bound is sharp. To this end, we construct $2$-generated $p$-groups and generators whose commutators have the desired orders. These groups are used to construct certain $p$-origamis in \Cref{section: strata of $p$-origamis}.

\begin{proposition}\label[proposition]{thm bound for exp(G')}
	For a non-trivial finite $p$-group $G$, $p$ odd, the following inequality holds
		\begin{align} \label{ineq bound for exp(G')}
		\exp(G')^2<|G|.
		\end{align}
\end{proposition}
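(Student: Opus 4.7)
The proof proceeds by strong induction on $|G|$; the base case $|G| \le p^2$ is immediate since every $p$-group of order at most $p^2$ is abelian, so $G'=1$. In the inductive step, I may assume $G' \neq 1$, so that $p^k := \exp(G') \ge p$, and I fix an element $c \in G'$ of order $p^k$.

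The overall strategy is to either descend to a proper quotient via the inductive hypothesis, or reduce to a quotient with cyclic commutator subgroup and invoke van der Waall's theorem, which asserts that a finite $p$-group (with $p$ odd) with cyclic commutator subgroup of order $p^m$ has order at least $p^{2m+1}$. The first reduction looks for a central subgroup $Z \le Z(G)$ of order $p$ with $\exp((G/Z)') = \exp(G')$. Since $(G/Z)' = G'Z/Z$ and an element of order $p^k$ in $G'$ has $p^{k-1}$-th power in $\mho^{k-1}(G')$, this exponent-preservation is equivalent to $\mho^{k-1}(G') \not\subseteq Z$. Such a $Z$ exists in the following sub-cases: when $\mho^{k-1}(G') \not\subseteq Z(G)$ (any central $Z$ of order $p$ works), when $|\mho^{k-1}(G')| \geq p^2$ (again any central $Z$ of order $p$), and when $|\mho^{k-1}(G')| = p$ with $|\Omega_1(Z(G))| \ge p^2$ (pick $Z \le \Omega_1(Z(G))$ distinct from $\mho^{k-1}(G')$). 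In all these cases, the inductive hypothesis applied to $G/Z$ yields $\exp(G')^2 = \exp((G/Z)')^2 < |G/Z| = |G|/p < |G|$.

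The remaining hard case is $|\Omega_1(Z(G))| = p$ and $\mho^{k-1}(G') = \Omega_1(Z(G))$; in particular $Z(G)$ is cyclic. Here the plan is to construct a normal subgroup $N \trianglelefteq G$ making $(G/N)' \cong \mathbb{Z}/p^k\mathbb{Z}$, so that van der Waall's theorem gives $|G/N| \ge p^{2k+1}$, hence $|G| > p^{2k} = \exp(G')^2$. Passing to the class-$2$ quotient $\bar G := G/\gamma_3(G)$, the subgroup $\bar G' = G'/\gamma_3(G)$ is abelian and central in $\bar G$, since $[G',G'] \le \gamma_3(G)$ and $[\bar G', \bar G] = \gamma_3(G)/\gamma_3(G) = 1$. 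If the image $\bar c$ of $c$ still has order $p^k$ in $\bar G'$, then by the structure theorem for finite abelian $p$-groups, the subgroup $\langle\bar c\rangle$ is a direct summand, $\bar G' = \langle \bar c\rangle \oplus B$; since $\bar G'$ is central, the complement $B$ is automatically $\bar G$-invariant, and its preimage $N$ in $G$ is normal and satisfies $N \cap \langle c\rangle = 1$ together with $G'/N = \langle cN\rangle \cong \mathbb{Z}/p^k\mathbb{Z}$, completing the construction and finishing the argument via van der Waall.

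The main obstacle is the subcase in which $\bar c$ has order strictly less than $p^k$ in $\bar G'$, that is, $c^{p^{k-1}} \in \gamma_3(G)$. In this situation the direct cyclic-quotient construction above fails, and one must either iterate the construction along the lower central series $\gamma_i(G)$ to find a deeper class-$2$ section in which a commutator of maximal order survives, or exploit the strong structural constraints of the hard case (cyclic $Z(G)$, and $\mho^{k-1}(G') = \Omega_1(Z(G))$) to select a different element of $G'$ of order $p^k$ whose $p^{k-1}$-th power lies outside $\gamma_3(G)$. I expect pinning down this reduction—ensuring that some element of maximal order in $G'$ can always be chosen to survive to the maximal abelian central quotient of $G'$—to be the crux of the proof.
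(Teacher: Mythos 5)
Your reduction steps are sound as far as they go: the analysis of when a central subgroup $Z$ of order $p$ preserves $\exp(G')$ is correct, and the construction of $N$ via a direct complement of $\langle\bar c\rangle$ in the central subgroup $\bar G'\leq G/\gamma_3(G)$ does yield $(G/N)'\cong C_{p^k}$, to which van der Waall's theorem applies. But the proof has a genuine gap, and you have located it yourself: the sub-case $c^{p^{k-1}}\in\gamma_3(G)$ is left unresolved, and it is not a peripheral technicality. In your "hard case" one has $\mho^{k-1}(G')=\Omega_1(Z(G))$ of order $p$, so \emph{every} element $c\in G'$ of order $p^k$ satisfies $\langle c^{p^{k-1}}\rangle=\mho^{k-1}(G')$; hence whenever $\Omega_1(Z(G))\subseteq\gamma_3(G)$ (which happens, for instance, for groups of maximal class of order at least $p^4$, where $Z(G)=\gamma_{n-1}(G)\leq\gamma_3(G)$), the image of $c$ in $G/\gamma_3(G)$ has order strictly less than $p^k$ for every admissible choice of $c$. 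So your first fallback (choosing a different element of maximal order) provably cannot work in general, and your second (iterating along the lower central series) is not carried out and faces a real obstruction: van der Waall's bound applies to the commutator subgroup of a group, not to an arbitrary class-$2$ section $\gamma_i(G)/\gamma_{i+2}(G)$, so it is not clear how a "deeper" cyclic section would be realized as $(G/N)'$ for a quotient whose order you control. As it stands, the argument does not prove the proposition.

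For comparison, the paper's proof is a three-line induction resting on a single structural fact (Huppert, Lemma III.7.5): for odd $p$, a non-cyclic finite $p$-group has a normal subgroup $N\cong C_p\times C_p$. Passing to $H=G/N$ costs at most a factor of $\exp(N)=p$ in the exponent of the commutator subgroup (if $\varphi(g)$ has order at most $\exp(H')$ then $g$ has order at most $p\cdot\exp(H')$), while the group order drops by $|N|=p^2$; so $\exp(G')^2\leq p^2\exp(H')^2<p^2|H|=|G|$. This neatly sidesteps all the case analysis around $Z(G)$ and avoids van der Waall's theorem entirely (indeed the paper presents the proposition as a generalization of van der Waall's result rather than a consequence of it). If you want to salvage your approach, the missing ingredient is precisely some replacement for the $C_p\times C_p$ normal subgroup in the case where $\Omega_1(Z(G))$ has order $p$.
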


\begin{proof}
	As the inequality holds for all cyclic $p$-groups, we may use an induction and consider a group $G$ such that the inequality holds for all groups of smaller order. 
	
	By Lemma~III 7.5 in \cite{hup}, a finite $p$-group $G$ for $p$ odd is either cyclic, or it has a normal subgroup $N\trianglelefteq G$ isomorphic to $C_p\times C_p$. In the first case, again \eqref{ineq bound for exp(G')} holds. So without loss of generality, we may assume that there exists a normal subgroup $N\trianglelefteq G$ isomorphic to $C_p\times C_p$. Define $H\vcentcolon= G/N$. By the induction hypothesis we have $\exp(H')^2<|H|$. Consider the canonical epimorphism $\varphi:G\to H$. It maps commutators of $G$ to commutators of $H$, so for $g\in G'$, the image $\varphi(g)$ lies in $H'$. Thus $\varphi(g)$ has order at most $\exp(H')$, and $g$ has order at most $\exp(N)\cdot\exp(H')=p\cdot\exp(H')$ in $G$. Hence we obtain the desired inequality
	\begin{align*}
	\exp(G')^2\le p^2\cdot\exp(H')^2 <p^2\cdot |H|=|N|\cdot|H|=|G|.
	\end{align*}
\end{proof}

\begin{corollary}\label[corollary]{cor bound for ord xy}
    Let $G$ be a $2$-generated $p$-group, $p$ odd, of order $p^n$. For generators $x,y$ of $G$, the order of their commutator obeys the inequality
    $$\ord([x,y]) < p^{\frac{n}{2}}.$$
\end{corollary}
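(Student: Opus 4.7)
The plan is to deduce this directly from \Cref{thm bound for exp(G')}. By definition the commutator $[x,y]$ lies in the commutator subgroup $G'$, so its order divides the exponent of $G'$, giving the elementary bound
$$\ord([x,y]) \leq \exp(G').$$
Since $G$ is a non-trivial finite $p$-group (being $2$-generated of order $p^n$ with $n\geq 1$), \Cref{thm bound for exp(G')} applies and yields $\exp(G')^2 < |G| = p^n$. Chaining these two inequalities gives
$$\ord([x,y])^2 \;\leq\; \exp(G')^2 \;<\; p^n,$$
and taking positive square roots produces the desired strict inequality $\ord([x,y]) < p^{n/2}$.

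There is no real obstacle: the corollary is just the specialization of the exponent bound in \Cref{thm bound for exp(G')} to a single element of $G'$, namely $[x,y]$. The only edge case worth a sentence is that $G$ might be cyclic, in which case $[x,y]=1$ and the inequality is trivial; this is consistent with the fact that for $n\geq 1$ and odd $p$ we have $p^{n/2}\geq \sqrt{3}>1$.
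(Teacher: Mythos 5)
Your proposal is correct and follows essentially the same route as the paper: both deduce the bound by combining $\ord([x,y])\le\exp(G')$ (since $[x,y]\in G'$) with the inequality $\exp(G')^2<|G|$ from \Cref{thm bound for exp(G')}. The paper phrases the final step in terms of the exponents $m,k$ of $p$ rather than taking square roots, but the argument is identical.
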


\begin{proof}
	Let $\exp(G')=p^m, |G|=p^n$ and ord$([x,y])=p^k$. Since $k\le \exp(G')$, it is sufficient to show that $m<\frac{n}{2}$. This is equivalent to $2m<n$. By \Cref{thm bound for exp(G')}, we have $\exp(G')^2<|G|$ and thus the inequality $2m<n$ holds.
\end{proof}

As in the case of $2$-origamis, we construct for natural numbers $n,k$ with $k<\frac{n}{2}$ a $p$-group which is a semidirect product of two cyclic groups, in order to show that the proven bound is sharp. The construction given in \Cref{group theory: strata p-origamis for p>2} works similarly as the one for $2$-origamis in the proof of \Cref{strata 2-origamis}. However, the group homomorphism defining the semidirect products needs to be chosen more carefully for odd primes.

We begin with a purely number-theoretic observation which will be useful when constructing the semidirect products.

\begin{lemma}\label[lemma]{p+1 has order p^k}
	Let $p$ be an odd prime and let $k$ be a positive natural number. Then $p+1$ has order $p^k$ in $\left(\Z/p^{k+1}\Z\right) ^*$.
\end{lemma}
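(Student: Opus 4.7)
The plan is to show directly that $(p+1)^{p^k}\equiv 1\pmod{p^{k+1}}$ while $(p+1)^{p^{k-1}}\not\equiv 1\pmod{p^{k+1}}$. Since the order of $p+1$ in $(\Z/p^{k+1}\Z)^*$ must divide $\varphi(p^{k+1})=p^k(p-1)$ and, being a power of an element congruent to $1$ modulo $p$, must be a power of $p$, these two congruences will force the order to be exactly $p^k$.

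The main computational tool will be the $p$-adic valuation identity
\[
v_p\bigl((p+1)^{p^j}-1\bigr)=j+1 \qquad \text{for all } j\geq 0,
\]
which I would establish by induction on $j$. The base case $j=0$ is just $v_p(p)=1$. For the inductive step, assume $(p+1)^{p^{j}}=1+p^{j+1}u$ with $u$ coprime to $p$. Then write
\[
(p+1)^{p^{j+1}}=\bigl(1+p^{j+1}u\bigr)^p = 1 + p\cdot p^{j+1}u + \sum_{i=2}^{p}\binom{p}{i}(p^{j+1}u)^i.
\]
The first non-trivial term contributes $p^{j+2}u$, and the remaining binomial terms have $p$-adic valuation at least $\min\bigl(1+2(j+1),\,p(j+1)\bigr)$. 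Here I would use that $p$ is odd and $j\ge 0$ to conclude $1+2(j+1)\ge j+3$ and $p(j+1)\ge 3(j+1)\ge j+3$, so every tail term is divisible by $p^{j+3}$. Reducing modulo $p^{j+3}$ leaves $(p+1)^{p^{j+1}}\equiv 1+p^{j+2}u\pmod{p^{j+3}}$ with $u$ coprime to $p$, which gives $v_p((p+1)^{p^{j+1}}-1)=j+2$ as required.

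With the valuation formula in hand, setting $j=k$ gives $(p+1)^{p^k}\equiv 1\pmod{p^{k+1}}$, and setting $j=k-1$ gives $(p+1)^{p^{k-1}}-1$ divisible by exactly $p^k$, hence not by $p^{k+1}$. Therefore the order of $p+1$ modulo $p^{k+1}$ divides $p^k$ but not $p^{k-1}$, so it equals $p^k$.

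The only delicate step is verifying that the $p$-adic valuations of the omitted binomial-expansion terms are large enough; this is exactly where odd $p$ matters (for $p=2$ the analogous statement fails since $\binom{2}{2}(p^{j+1}u)^2$ contributes at the same valuation as the linear term). Apart from that, the argument is a routine application of the binomial theorem and induction.
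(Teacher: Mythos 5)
Your proof is correct and follows essentially the same route as the paper: an induction establishing that $(p+1)^{p^j}-1$ has $p$-adic valuation exactly $j+1$ (the paper phrases this as the congruence $(1+p)^{p^m}\equiv 1+p^{m+1} \bmod p^{m+2}$), with the binomial-expansion tail terms controlled using the oddness of $p$ in exactly the same way. The only cosmetic difference is that you conclude via the single exponent $j=k-1$ rather than checking all $1\le m<k$ as the paper does, which is a slightly cleaner finish but not a different argument.
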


\begin{proof}
	First, we prove by induction on $m$ that for each $m\ge 0$, the following congruence holds
	\begin{align}\label{congruence relation}
	(1+p)^{p^m} \equiv 1+p^{m+1} \mod p^{m+2}.
	\end{align}
	This is clear for $m=0$. We assume that the congruence holds for some $m$, i.e., we have
	$$(1+p)^{p^m}=1+p^{m+1}(1+pq)$$
	for some $q\in\NN$. Using this, we compute
	\begin{align*}
	(1+p)^{p^{m+1}}&=(1+p^{m+1}(1+pq))^p\\
	&=1+ \underbrace{p\cdot p^{m+1}(1+pq)}_{\equiv~ p^{m+2}\mod p^{m+3}}+\sum_{i=2}^{p}\underbrace{\binom{p}{i}p^{(m+1)\cdot i}}_{\equiv~ 0\mod p^{m+3}}{(1+pq)^i}\\
	&\equiv 1+p^{m+2} \mod p^{m+3}.
	\end{align*}
	This shows that the congruence relation \eqref{congruence relation} is true for $m+1$. By induction, it holds for all $m\in \NN$.
	
	Choosing $m=k$ we get
	\begin{align*}
	(1+p)^{p^k}&\equiv 1+p^{k+1}\mod p^{k+2}\\
	&\equiv 1\mod p^{k+1}.
	\end{align*}
	In particular, the order of $1+p$ in $\left(\Z/p^{k+1}\Z\right) ^*$ divides $p^k$. For $1\le m<k$, we have
	\begin{align*}
	(1+p)^{p^m}&\equiv 1+p^{m+1}\mod p^{m+2}\\
	&\not\equiv 1\mod p^{m+2}.
	\end{align*}
	We conclude that
	$$(1+p)^{p^m}\not\equiv 1\mod p^{k+1}$$
	for $1\le m<k$. Thus the order of $1+p$ is $p^k$.
\end{proof}

\begin{proposition}\label[proposition]{group theory: strata p-origamis for p>2}
	Let $n,k\in \NN$ with $k<\frac{n}{2}$. There exists a $2$-generated $p$-group $G$ of order $p^n$ with generators $x,y\in G$ such that
     $$\ord([x,y])=\exp(G')=p^k.$$
\end{proposition}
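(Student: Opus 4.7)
The plan is to follow the same strategy as in \Cref{constr of 2-gps}, namely to realize $G$ as a semidirect product of two cyclic groups, but where the action by conjugation must be chosen more carefully since for odd $p$ the element $-1$ no longer produces commutators of large $p$-power order. The previous \Cref{p+1 has order p^k} tells us that $1+p$ has order $p^k$ in $(\Z/p^{k+1}\Z)^*$, which is exactly what is needed to replace the inversion automorphism used for $p=2$.

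Concretely, I would define $C_{p^{k+1}} = \langle r \rangle$, $C_{p^{n-k-1}} = \langle s \rangle$, and consider the automorphism $\alpha \colon C_{p^{k+1}} \to C_{p^{k+1}}$, $r^m \mapsto r^{m(1+p)}$. By \Cref{p+1 has order p^k}, $\alpha$ has order $p^k$ in $\aut(C_{p^{k+1}})$. Since $k < n/2$ implies $2k \le n-1$ and hence $k \le n-k-1$, the order $p^k$ divides $p^{n-k-1}$, so the map $\varphi \colon C_{p^{n-k-1}} \to \aut(C_{p^{k+1}})$, $s^m \mapsto \alpha^m$, is a well-defined homomorphism. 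Set
$$G^p_{(n,k)} \vcentcolon= C_{p^{k+1}} \rtimes_\varphi C_{p^{n-k-1}} = \langle r, s \mid r^{p^{k+1}} = s^{p^{n-k-1}} = 1,\ s\inv r s = r^{1+p}\rangle,$$
which has order $p^n$ and is generated by $\{r,s\}$.

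The key verifications are then: first, using $s\inv r s = r^{1+p}$, compute
$$[r,s] = r\inv s\inv r s = r\inv r^{1+p} = r^p,$$
which has order $p^k$ in $\langle r \rangle$. Second, to identify $\exp(G')$, note that $\langle r \rangle \trianglelefteq G^p_{(n,k)}$ and the quotient is the abelian group $C_{p^{n-k-1}}$, hence $G' \subseteq \langle r \rangle$. By \Cref{commutator-generated}, $G'$ is generated by conjugates of $[r,s] = r^p$; but every such conjugate is of the form $s^{-m} r^p s^m = r^{p(1+p)^m} \in \langle r^p \rangle$, so $G' = \langle r^p \rangle$ is cyclic of order $p^k$, giving $\exp(G') = p^k = \ord([r,s])$ as required.

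I expect no serious obstacle in this construction; the only place that requires genuine input is verifying that $\alpha$ has the right order, which is exactly the content of \Cref{p+1 has order p^k}, and checking the arithmetic condition $k \le n-k-1$, which is precisely equivalent to the hypothesis $k < n/2$ for integer $k,n$. The remaining steps — computing the commutator and identifying $G'$ — are routine once the semidirect product is set up correctly.
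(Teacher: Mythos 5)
Your proposal is correct and follows essentially the same route as the paper: the same semidirect product $C_{p^{k+1}} \rtimes_\varphi C_{p^{n-k-1}}$ with conjugation acting by $r\mapsto r^{1+p}$, the same appeal to \Cref{p+1 has order p^k} to justify well-definedness of $\varphi$ via $k\le n-k-1$, and the same computations $[r,s]=r^p$ and $G'=\langle r^p\rangle$. The only (harmless) cosmetic difference is that you additionally note $G'\subseteq\langle r\rangle$ via the abelian quotient, which the paper skips since the conjugate computation already suffices.
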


\begin{proof}
    Fix a positive natural number $n$ and let $k$ be an integer with $0\le k<\frac{n}{2}$. The group $G^p_{(n,k)}$ is constructed as a semidirect product of two cyclic groups $C_{p^{k+1}}=\langle r \rangle$ and $C_{p^{n-k-1}}=\langle s \rangle$ of order $p^{k+1}$ and $p^{n-k-1}$, respectively. First, consider the automorphism group of $C_{p^{k+1}}$. From elementary group theory, we know that
	$$\aut(C_{p^{k+1}}) \cong \left( \Z/p^{k+1}\Z \right)^*\cong C_{\varphi(p^{k+1})}= C_{p^k(p-1)}.$$
	The map
	$$\alpha\colon C_{p^{k+1}}\to C_{p^{k+1}}, r^m\mapsto r^{m\cdot(p+1)}$$
	defines a group automorphism, since $p$ and $p+1$ are coprime.
	
	Now we consider the map
	$$\varphi\colon C_{p^{n-k-1}}\to \aut(C_{p^{k+1}}), s^m\mapsto \alpha^m.$$
	We claim that $\varphi$ is a well-defined group homomorphism. To see this, we need show that $\alpha^{p^{n-k-1}}$ is the identity map on $C_{p^{k+1}}$. This follows, since $k\leq \frac{n-1}2$ implies $n-k-1\geq k$, so $(p+1)^{p^{n-k-1}}\equiv 1$ mod $p^{k+1}$, again using \Cref{p+1 has order p^k}. The congruence follows because $p+1$ has order $p^k$ in $\left( \Z/p^{k+1}\Z \right)^*$ by \Cref{p+1 has order p^k}. Write
	$$\left( p+1\right) ^{p^{n-k-1}}= j\cdot p^{k+1}+1$$
	for some natural number $j$. Then we have
	\begin{align*}
	\alpha^{p^{n-k-1}}(r^i)&=r^{i\cdot(p+1)^{p^{n-k-1}}}\\
	&=r^{i\cdot j\cdot p^{k+1}}\cdot r^i\\
	&=r^i
	\end{align*}
	for each $1\le i\le p^{k+1}$. Since $r$ has order $p^{k+1}$ in $C_{p^{k+1}}$, the last equality follows. We conclude that $\alpha^{p^{n-k-1}}=\id$.
	
	Let $G^p_{(n,k)}$ be the semidirect product
	$$C_{p^{k+1}} \rtimes_\varphi C_{p^{n-k-1}}= \langle r,s ~|~ r^{p^{k+1}}=s^{p^{n-k-1}}=1,~s\inv rs=r^{p+1} \rangle.$$
	Then $G^p_{(n,k)}$ has order $p^n$. We claim that $G^p_{(n,k)}$ together with the pair of generators $(r,s)$ has the desired properties. Using the defining relations of $G^p_{(n,k)}$, we conclude
	$$[r,s]=r\inv s\inv rs=r^{p}.$$
	In particular, the commutator $[r,s]$ has order $p^{k}$.
	
	Finally we show that the commutator subgroup of $G^p_{(n,k)}$ has exponent $p^k$. Since $G^p_{(n,k)}$ is generated by $\{r,s\}$, the commutator subgroup is generated by elements of the form $g\inv [r,s]g$ for $g\in G^p_{(n,k)}$ by \Cref{commutator-generated}. Each of these elements is contained in $\langle r^p \rangle$. Hence $(G^p_{(n,k)})'$ is cyclic of order $p^k$.
\end{proof}

\subsection{On the order of certain commutators in \texorpdfstring{$2$}{2}-generated \texorpdfstring{$p$}{p}-groups}\label{section: gp thy iso deck groups}

In this section, we study a second question that arises from the geometric setting in \Cref{section: geometric motivation}. Recall that the group of deck transformations of a normal origami is always a finite $2$-generated group and that two normal origamis with isomorphic group of deck transformations $(G,x_1,y_1)$ and $(G,x_2,y_2)$ lie in the same stratum if and only if the orders of the commutators $[x_1,y_1]$ and $[x_2,y_2]$ agree.

We first note the the possible strata for normal origamis with a fixed deck group depend only on its isoclinism class. We recall that isoclinism is an equivalence relation for groups generalizing isomorphism. For its definition we use the observation that the commutator in any group $G$ induces a well-defined map $G/Z(G)\times G/Z(G)\to G$.

\begin{definition}[\cite{Hall}] Two groups $G_1,G_2$ are \textbf{isoclinic} if there are isomorphisms $\phi\colon G_1/Z(G_1)\to G_2/Z(G_2)$ and $\psi\colon G'_1\to G'_2$ which are compatible with the commutator maps in the sense that the following is a commutative diagram:
\begin{align*}
\xymatrix{
	G_1/Z(G_1)\times G_1/Z(G_1)
	\ar[d]_{\phi\times\phi}
	\ar[r]^{~~~~~~~~~~~~~[\cdot,\cdot]}
	&
	G'_1
	\ar[d]_{\psi}
    \\
	G_2/Z(G_2)\times G_2/Z(G_2)
	\ar[r]^{~~~~~~~~~~~~[\cdot,\cdot]}
	&
	G'_2
}.
\end{align*}
\end{definition}

In particular, all abelian groups are isoclinic.

\begin{lemma}\label[lemma]{lem-isoclinic-groups} The set of possible commutator orders $\ord([x,y])$ for generators $x,y$ of a $2$-generated group $G$ depends only on the isoclinism class of $G$.
\end{lemma}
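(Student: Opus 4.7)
The plan is to transport commutators from $G_1$ to $G_2$ via the isoclinism, and to arrange that the transported classes lift to a generating pair. Fix isoclinic $2$-generated groups $G_1,G_2$ with isoclinism data $\phi\colon G_1/Z(G_1)\to G_2/Z(G_2)$ and $\psi\colon G_1'\to G_2'$. It suffices to show that for any generators $x_1,y_1$ of $G_1$ there exist generators $x_2,y_2$ of $G_2$ satisfying $\ord([x_2,y_2])=\ord([x_1,y_1])$; equality of the two sets of commutator orders then follows by the symmetric argument applied to $\phi^{-1},\psi^{-1}$.

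First, I would record the standard observation that $[xz,y]=[x,y]=[x,yz]$ for any $z\in Z(G)$, so the commutator descends to a well-defined map $G/Z(G)\times G/Z(G)\to G'$. The compatibility diagram in the definition of isoclinism therefore yields $[\tilde u,\tilde v]=\psi([u,v])$ for all $u,v\in G_1$ and any lifts $\tilde u,\tilde v\in G_2$ of $\phi(\bar u),\phi(\bar v)$. Since $\psi$ is an isomorphism of groups, $\ord([\tilde u,\tilde v])=\ord([u,v])$.

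Given generators $x_1,y_1$ of $G_1$, their images $\bar x_1,\bar y_1$ generate $G_1/Z(G_1)$, so $\phi(\bar x_1),\phi(\bar y_1)$ generate $G_2/Z(G_2)$. The decisive step is to lift these two classes to a generating pair of $G_2$. For this I would invoke Gasch\"utz's lemma: if $N$ is an abelian normal subgroup of a finite group $H$ which is $d$-generated, then every generating set of $H/N$ of size $d$ lifts to a generating set of $H$ of size $d$. Applied to $H=G_2$, $N=Z(G_2)$, and $d=2$, this yields generators $x_2,y_2\in G_2$ whose images in $G_2/Z(G_2)$ are $\phi(\bar x_1),\phi(\bar y_1)$. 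Combining with the previous paragraph gives $\ord([x_2,y_2])=\ord(\psi([x_1,y_1]))=\ord([x_1,y_1])$, as desired.

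The main obstacle is precisely the lifting step: arbitrary preimages of $\phi(\bar x_1),\phi(\bar y_1)$ in $G_2$ only satisfy $\langle x_2,y_2\rangle\cdot Z(G_2)=G_2$, which does not automatically force $\langle x_2,y_2\rangle=G_2$ when $Z(G_2)$ is not contained in $\Phi(G_2)$. In the finite $p$-group setting of interest one could alternatively argue directly via Burnside's basis theorem (\Cref{frattini group props}): it suffices to adjust lifts by central elements so that their images in $G_2/\Phi(G_2)$ form an $\F_p$-basis, and this reduces to a short case analysis on the position of $Z(G_2)\Phi(G_2)/\Phi(G_2)$ inside the rank-$2$ space $G_2/\Phi(G_2)$ (with the case $Z(G_2)\Phi(G_2)=G_2$ forcing $G_2$ to be abelian, in which every commutator has order $1$ and the claim is trivial). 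Either route establishes one inclusion, and symmetry completes the proof.
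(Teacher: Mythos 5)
Your argument is correct, and its skeleton coincides with the paper's: use that the commutator descends to a well-defined map $G/Z(G)\times G/Z(G)\to G'$, transport $[x_1,y_1]$ through the isoclinism data so that any lifts $x_2,y_2$ of $\phi(\bar x_1),\phi(\bar y_1)$ satisfy $[x_2,y_2]=\psi([x_1,y_1])$ (hence have the same order), and conclude by symmetry. The two proofs diverge only at the step you correctly single out as the crux, namely arranging that the lifts actually generate $G_2$. The paper handles this with a dichotomy rather than a lifting theorem: either $Z(G_2)\subseteq\Phi(G_2)$, in which case \emph{arbitrary} preimages already generate $G_2$ because their images generate $G_2/\Phi(G_2)$, or some central element lies outside $\Phi(G_2)$, in which case it extends to a two-element generating set, so $G_2$ (and hence $G_1$, via $G_1'\cong G_2'=1$) is abelian and the claim is trivial. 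You instead invoke Gasch\"utz's lemma, which produces the generating lift uniformly with no case split; this is a clean and fully adequate alternative for finite $2$-generated groups, at the price of importing a nontrivial external result where the paper only needs the Frattini/Burnside argument it has already recorded in \Cref{frattini group props}. Your fallback sketch is essentially the paper's route; the only imprecision there is that already $Z(G_2)\not\subseteq\Phi(G_2)$ (not merely $Z(G_2)\Phi(G_2)=G_2$) forces $G_2$ abelian, which is in fact what makes that case trivial. Both arguments implicitly assume finiteness, which is harmless in the paper's setting.
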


\begin{proof} Assume $G_1,G_2$ are isoclinic groups and $[x,y]$ has order $n$ for generators $x,y$ of $G_1$. Then $n$ is also the order of $\psi([x, y])=[\phi(\overline x),\phi(\overline y)]$, where $\overline x,\overline y$ are the images in $G_1/Z(G_1)$. Thus $\ord([x',y'])=\ord([x,y])$ for any $x',y'$ in $G_2$ such that $x'\equiv\phi(x)$ and $y'\equiv\phi(y)$ modulo $Z(G_2)$.

If $Z(G_2)$ does not lie in $\Phi(G_2)$, then a central element can be chosen as one of two generators of $G_2$ (see \Cref{frattini group props}), so $G_2$ is abelian and $G_1$ is abelian, and the only possible commutator order is $1$ in each of these groups.

Otherwise, $Z(G_2)$ does lie in $\Phi(G_2)$ and the images of $x',y'$ in $G_2/\Phi(G_2)$ generate the quotient, so $x',y'$ are generators of $G_2$ and the order of $[x',y']$ equals the order of $[x,y]$. Since isoclinism is a reflexive relation, this proves the assertion.
\end{proof}

\begin{corollary}\label[corollary]{cor-dihedral-groups} For each $n\geq 1$, the dihedral group, the generalized quaternion group, and the semidihedral group with $2^n$ elements have the same set of possible commutator orders $\ord([x,y])$ for generators $x,y$.
\end{corollary}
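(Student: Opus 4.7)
The plan is to apply \Cref{lem-isoclinic-groups}, which reduces the claim to showing that for each $n$ the three groups under consideration are pairwise isoclinic. For $n\le 2$ the groups that exist are abelian, so isoclinism is automatic, and for $n=3$ only the pair $D_8, Q_8$ arises. The substantive case is thus $n\ge 3$, where each group is a $2$-group of maximal class of order $2^n$.

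I would then exploit the fact that each such $G$ admits a presentation
\[
  G=\langle r,s \mid r^{2^{n-1}}=1,\ srs\inv=r^{\varepsilon},\ s^2=z\rangle
\]
with $z\in\langle r^{2^{n-2}}\rangle$ and $\varepsilon\in\{-1,2^{n-2}-1\}$, and that in every case $\varepsilon^2\equiv 1\pmod{2^{n-1}}$, $Z(G)=\langle r^{2^{n-2}}\rangle$ has order $2$, $G'=\langle r^2\rangle$ is cyclic of order $2^{n-2}$, and $G/Z(G)\cong D_{2^{n-1}}$ with $\overline{s}$ inverting $\overline{r}$. From this common structure, for any two of the three groups $G_1, G_2$, I would define the isoclinism data as $\phi\colon G_1/Z(G_1)\to G_2/Z(G_2)$ sending $\overline{r}\mapsto\overline{r}$, $\overline{s}\mapsto\overline{s}$, together with $\psi\colon G_1'\to G_2'$ sending $[r,s]_{G_1}\mapsto[r,s]_{G_2}$. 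The latter is well-defined because $[r,s]=r^{\varepsilon-1}$ is a generator of $G'$ in each case: both $-2$ and $2^{n-2}-2=2(2^{n-3}-1)$ are equal to $2$ times a unit modulo $2^{n-1}$.

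The hard part will be verifying the commutator-compatibility of $(\phi,\psi)$ on all element pairs $r^i s^a, r^j s^b$, not merely on $(r,s)$. A direct expansion using the presentation and the identity $\varepsilon^2\equiv 1\pmod{2^{n-1}}$ shows that $[r^i s^a, r^j s^b]=r^{(\varepsilon-1)\cdot Q(i,j,a,b)}$ for an integer $Q$ that is independent of $\varepsilon$; explicitly, $Q\in\{0,\,i,\,-j,\,j-i\}$ in the four cases $(a,b)\in\{0,1\}^2$. Since $\psi$ rescales $r^{\varepsilon_1-1}$ to $r^{\varepsilon_2-1}$ and exponents are taken modulo the common order $2^{n-2}$ of $G'$, the map $\psi$ will intertwine these commutators across all three groups. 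This establishes pairwise isoclinism, and \Cref{lem-isoclinic-groups} then yields the corollary.
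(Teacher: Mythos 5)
Your proof is correct and follows the paper's approach: both reduce the statement to \Cref{lem-isoclinic-groups} by showing the three groups are isoclinic. The only difference is that the paper simply cites the isoclinism as well known (Berkovich, §29 Exercise 4), whereas you verify it explicitly from the presentations; your computation of $[r^is^a,r^js^b]=[r,s]^{Q(i,j,a,b)}$ with $Q$ independent of $\varepsilon$ checks out and makes the argument self-contained.
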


\begin{proof} They are well-known to be isoclinic, see for instance \cite[§29 Exercise 4]{Berkovich}.
\end{proof}

We will return to these groups, recall their definition, and compute the set of possible commutator orders in \Cref{maximal p-groups}.

We noted that the possible strata of $p$-origami with a given deck group depends on the possible commutator orders for pairs of generators of this group. We will see that, in fact, for many groups there is only one stratum possible. To study such groups, we first translate this property into the language of group theory.

\begin{definition}\label[definition]{def property C}
We say that a finite $2$-generated group $G$ has \textbf{\property}, if there exists a natural number $n$ such that for each $2$-generating set $\{x,y\}$ of $G$ the order of $[x,y]$ equals $n$.
\end{definition}

\begin{question}\label[question]{qu gp th iso gp same stratum}
	Which finite $2$-generated $p$-groups have \property?
\end{question}

For a large class of $p$-groups we prove \property. However, in \Cref{section: counterexample} we construct for each prime $p$ a finite $p$-group with generating sets $\{x,y\}$ and $\{x',y'\}$ such that $\ord([x,y])\neq\ord([x',y'])$.

In a first example, we show that most alternating groups -- which are not $p$-groups -- do not have \property. We use this example to construct origamis in \Cref{section: results on p-oris}.

\begin{example}\label[example]{ex. iso deck groups not same stratum}
	For $n\in \mathbb{N}_{\ge5}$, we consider the alternating group $A_n$ with the following pairs of generators: $((1,2,\dots,n-1,n), (1,2,3))$ and $((3,4,\dots, n-1,n), (1,3)(2,4))$. The orders of the commutators
	\begin{align*}
	    [(1,2,\dots,n-1,n),(1,2,3)]&=(1,2,4)\\
	    [((3,4,\dots, n-1,n),(1,3)(2,4))]&=(1,2,5,4,3)
	\end{align*}
	are $3$ and $5$, respectively. Hence there are two pairs of generators such that the order of their commutator is different and $A_n$ does not have \property for $n\geq 5$. Notice that those $A_n$ are not $p$-groups, since their order is $\tfrac{n!}{2}$.
	
	Further, note that we multiply permutations from the left because we label the squares of a normal origami by multiplying generators of the deck group from the right. 
\end{example}

In the following, we will prove \property for certain families of $p$-groups.

\subsubsection{Regular and order-closed \texorpdfstring{$p$}{p}-groups} \label{sec-order-closed}

We begin by stating some basic properties of regular $p$-groups. Recall that a $p$-group $G$ is \textbf{regular} if for each $g,h\in G$ and $i\in \N$, there exists some $c\in \left\langle g,h\right\rangle$ such that
\begin{align*}
(gh)^{p^i}=g^{p^i}h^{p^i}c.
\end{align*} 
Note that the commutator subgroup of a regular $p$-group is regular.

We call a $p$-group $G$ \textbf{weakly order-closed} if the product of elements of order at most $p^k$ has order at most $p^k$ for any $k\geq 0$. In the literature, $p$-groups for which all sections (i.e., subquotients) are weakly order-closed according to our definition have been studied and are called \textbf{order-closed} $p$-groups (see \cite{Mann}). Clearly, order-closed $p$-groups are weakly order-closed, and one can verify that all subgroups of a weakly order-closed group are so, as well. Hence a $p$-group is order-closed if and only if all its subgroups are weakly order-closed. The class of $p$-groups we call weakly order-closed has been called $\mathcal{O}_p$ in \cite{wil}.

\begin{lemma}[see~\cite{LGMcK}, Lemma 1.2.13]\label[lemma]{props of regular gps}~
	\begin{itemize}
		\item[1)] Any regular $p$-group is order-closed, and hence also weakly order-closed.
		\item[2)] A $2$-group is regular if and only if it is abelian.
	\end{itemize}
\end{lemma}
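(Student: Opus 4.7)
The plan for both parts rests on the standard refinement of the regularity identity: if $G$ is regular, then for any $g,h\in G$ and any $k\geq 0$, the defining equation $(gh)^{p^k}=g^{p^k}h^{p^k}c$ can be strengthened so that $c$ lies in $\mho^k(\langle g,h\rangle')$, i.e., $c$ is a product of $p^k$-th powers of elements of the commutator subgroup of $\langle g,h\rangle$. I would establish this refinement first (it is standard, going back to Ph.~Hall), and also invoke the well-known fact that subgroups and homomorphic images of regular $p$-groups are again regular, so that every section of a regular $p$-group is regular.

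For Part 1, the aim is to show that every section of $G$ is weakly order-closed. By the stability of regularity under passing to sections, it suffices to show that any regular $p$-group is itself weakly order-closed. Fix $g,h\in G$ with $\ord(g),\ord(h)\leq p^k$. Applying the refined identity gives $(gh)^{p^k}=c$ with $c\in\mho^k(\langle g,h\rangle')$. The subgroup $H:=\langle g,h\rangle'$ is a proper regular subgroup of $\langle g,h\rangle$ (trivially so unless $\langle g,h\rangle$ is abelian, in which case $c=1$ directly), and I would proceed by induction on $|G|$: by the inductive hypothesis, $H$ is itself weakly order-closed, and writing $c$ as a product of $p^k$-th powers of elements of $H$ lets one iterate the identity inside $H$ and conclude $c=1$. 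Hence $\ord(gh)\leq p^k$, which gives weak order-closedness of $G$, and then of every section.

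For Part 2, the ``if'' direction is immediate: in an abelian group the regularity identity holds with $c=1$. For ``only if'', I would argue by contradiction. Suppose $G$ is a regular nonabelian $2$-group. Applying the refined regularity identity with $p=2$ and $k=1$ to arbitrary $g,h\in G$ yields $(gh)^2=g^2h^2c$ with $c\in\mho^1(\langle g,h\rangle')$. Expanding $(gh)^2=g\cdot(hg)\cdot h$ and comparing shows that the commutator $[g,h]$ is forced into $\mho^1(G')$; ranging over all pairs, this gives $G'=\mho^1(G')$. But $G'$ is a nontrivial finite $2$-group, so the descending chain $G'\supseteq\mho^1(G')\supseteq\mho^2(G')\supseteq\cdots$ eventually reaches the identity, contradicting $G'=\mho^1(G')$. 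Therefore $G$ must be abelian. An alternative, more computational route would be to pass to a minimal nonabelian subgroup (which for $2$-groups is classified as dihedral, generalized quaternion, or modular) and verify by direct inspection that none of these is regular.

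The main obstacle in both parts is the careful handling of the error term $c$: the version of the regularity identity given informally in the paper does not by itself force $c$ to lie in $\mho^k(\langle g,h\rangle')$, and it is precisely this strengthening that drives the induction in Part 1 and yields the crucial containment $[g,h]\in\mho^1(G')$ in Part 2. Establishing and using this strengthened form is where the real content lies; once it is in hand, both conclusions follow by short inductions.
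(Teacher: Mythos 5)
The paper does not prove this lemma at all: it is imported verbatim from Leedham-Green--McKay \cite{LGMcK}, Lemma 1.2.13, so there is no in-paper argument to compare against. Judged on its own terms, your Part 2 is essentially the classical proof and is sound: from $(gh)^2=g^2h^2c$ with $c\in\mho^1(\langle g,h\rangle')$ one extracts $[g,h]=hch^{-1}\in\mho^1(\langle g,h\rangle')$, hence $G'\subseteq\mho^1(G')$, which is impossible for a nontrivial finite $2$-group $G'$. Do prefer the containment $\mho^1(G')\le\Phi(G')\lneq G'$ to your descending chain, though: $G'=\mho^1(G')$ does not formally give $G'=\mho^2(G')$ without an extra word, whereas the Frattini argument closes immediately. (Also, your parenthetical classification of minimal nonabelian $2$-groups as dihedral, quaternion, or modular is not correct --- that is the list of $2$-groups with a cyclic subgroup of index $2$ --- but since that was only an alternative route, it does not affect the main argument.)

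Part 1, however, has a genuine gap at the step where you conclude $c=1$. You have $(gh)^{p^k}=c$ with $c\in\mho^k(H)$ for $H=\langle g,h\rangle'$, and you invoke the inductive hypothesis that $H$ is weakly order-closed. But weak order-closedness of $H$ says nothing about the orders of the elements of $H$: to get $\mho^k(H)=1$ you need $\exp(H)\le p^k$, and for that you must know that the generators of $H$ --- the conjugates of $[g,h]$ --- have order at most $p^k$. That is precisely the nontrivial point, and it is not supplied by the refined regularity identity alone. The standard way to fill it is to first prove that in a regular $p$-group $a^{p^k}=1$ implies $[a,b]^{p^k}=1$, which is usually deduced from the auxiliary lemma that $x^{p^k}=y^{p^k}$ if and only if $(xy^{-1})^{p^k}=1$ (itself an induction using the refined identity). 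With that in hand, $H$ is generated by elements of order at most $p^k$, the inductive hypothesis gives $\exp(H)\le p^k$, hence $\mho^k(H)=1$ and $c=1$. Without it, ``iterating the identity inside $H$'' only rewrites $c$ as a $p^k$-th power of some element of $H$; it does not kill it.
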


\begin{corollary}
	Let $G$ be a weakly order-closed $p$-group. If $x_1,\dots,x_r\in G$ generate $G$, then the exponent of $G$ is equal to the maximum of the orders $ \mathrm{ord}(x_i), ~1\le i\le r$.
\end{corollary}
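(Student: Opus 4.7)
The plan is to show the two inequalities $\exp(G)\ge \max_i\ord(x_i)$ and $\exp(G)\le \max_i\ord(x_i)$ separately, where the first is immediate and the second follows from iterating the weakly order-closed property along a word expressing an arbitrary group element in terms of the generators.

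More precisely, set $M \vcentcolon= \max_{1\le i\le r}\ord(x_i)$, which is a power of $p$ since $G$ is a $p$-group and the maximum of finitely many $p$-powers is again a $p$-power; write $M=p^k$. The inequality $\exp(G)\ge M$ is clear since each $x_i$ belongs to $G$. For the reverse inequality, I would take an arbitrary $g\in G$ and express it as a word $g=y_1 y_2\cdots y_\ell$, where each $y_j\in\{x_1^{\pm1},\dots,x_r^{\pm1}\}$. Because inversion preserves order, each $y_j$ has order at most $p^k$.

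Then I would proceed by induction on the word length $\ell$: the base case $\ell=1$ is trivial, and for the induction step, writing $g=(y_1\cdots y_{\ell-1})\cdot y_\ell$ as the product of two elements of order at most $p^k$ (the first by induction hypothesis, the second by the previous paragraph), the weakly order-closed property of $G$ yields that $g$ has order at most $p^k=M$. Thus every element of $G$ has order at most $M$, so $\exp(G)\le M$, completing the proof.

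I do not anticipate any real obstacle here: the argument is a direct unfolding of the definition of weakly order-closed once one observes that a generating set for a group yields every element as a finite word in the generators and their inverses, and that inversion preserves order. The only thing to be mildly careful about is that the definition of weakly order-closed as stated refers to products (of two elements), which is exactly what is needed to drive the induction.
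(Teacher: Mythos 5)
Your proof is correct and takes essentially the same route as the paper: the paper's own argument simply notes that inversion preserves order and that every element is a word in the generators and their inverses, leaving the iteration of the weakly order-closed property implicit. Your version just makes the induction on word length explicit.
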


\begin{proof}
	Note that the orders of $x_i$ and $x_i\inv $ are equal. As every group element can be written as a word in $\{x_i,~x_i\inv ~|~1\le i \le r\}$, the claim follows.
\end{proof}

\begin{proposition}\label[proposition]{iso regular gp same stratum}\label[proposition]{G' weakly order-closed implies property}
	Any finite $2$-generated $p$-group with weakly order-closed commutator subgroup has \property. In particular, any finite $2$-generated $p$-group with regular commutator subgroup has \property.
\end{proposition}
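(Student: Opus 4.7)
The plan is to show that for any generating pair $(x,y)$ of $G$, the commutator order $\ord([x,y])$ equals $\exp(G')$; then this common value serves as the constant $n$ required by \property.

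First I would invoke \Cref{commutator-generated}: since $G=\langle x,y\rangle$, the commutator subgroup $G'$ is generated by the set of conjugates $\{g\inv[x,y]g : g\in G\}$. Conjugation preserves order, so every element in this generating set has order exactly $\ord([x,y])$. In particular, $G'$ admits a generating set all of whose elements have order at most $\ord([x,y])$.

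Next I would use the weak order-closedness hypothesis on $G'$: a product of elements of $G'$ each of order at most $p^k$ again has order at most $p^k$. Applied to arbitrary words in the conjugate generators above (and their inverses, which have the same order), this forces every element of $G'$ to have order at most $\ord([x,y])$, hence $\exp(G')\le \ord([x,y])$. The reverse inequality is trivial since $[x,y]\in G'$, so $\exp(G')=\ord([x,y])$. Because this identity holds for every generating pair, the order of $[x,y]$ is the same number $n=\exp(G')$ for all such pairs, which is exactly \property.

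For the ``in particular'' clause, if $G'$ is regular then by \Cref{props of regular gps}(1) it is order-closed, and every order-closed $p$-group is weakly order-closed by definition, so the first part applies. There is no real obstacle here beyond correctly combining the commutator-generation lemma with the defining property of weak order-closedness; the key observation is simply that in a $2$-generated group, $G'$ is normally generated by the single commutator $[x,y]$, which together with weak order-closedness pins down $\exp(G')$ to $\ord([x,y])$ without any further freedom.
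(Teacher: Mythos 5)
Your proposal is correct and follows essentially the same route as the paper: both reduce \property to the identity $\ord([x,y])=\exp(G')$ via \Cref{commutator-generated} and the defining property of weak order-closedness (the paper packages the step ``exponent equals the maximal order of a generating set'' as a separate corollary, which you simply inline). The handling of the ``in particular'' clause via \Cref{props of regular gps} also matches the paper.
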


\begin{proof}
	Let $G$ be a finite $2$-generated $p$-group with regular commutator subgroup. Further, let $x,y$ and $x',y'$ be two pairs of generators of $G$. Hence, by \Cref{commutator-generated}, the commutator subgroup $G'$ is generated by each of the sets
	\begin{align*}
	&\left\lbrace g\inv [x,y]g~|~g\in G \right\rbrace,\\
	&\left\lbrace g\inv [x',y']g~|~g\in G \right\rbrace.
	\end{align*}
	We have
	\begin{align*}
	\mathrm{ord}(g\inv [x,y]g)&=\mathrm{ord}([x,y]),\\ \mathrm{ord}(g\inv [x',y']g)&=\mathrm{ord}([x',y']).
	\end{align*}
	By the previous lemma, $G'$ being weakly order-closed implies that
	$$\mathrm{ord}([x,y])=\mathrm{exp}(G')=\mathrm{ord}([x',y']).$$
	Hence the orders of the commutators $[x,y]$ and $[x',y']$ coincide.
\end{proof}

As we will discuss in \Cref{section infinite origamis}, a version of this result holds for pro-$p$ groups.

\subsubsection{\texorpdfstring{$p$}{p}-groups of maximal class}\label{maximal p-groups}
In this section, we prove that \property holds for $p$-groups of maximal class.

We recall that the \textbf{lower central series} of a group $G$ is the series of subgroups
\begin{align*}
\gamma_1(G)\ge \gamma_2(G)\ge \gamma_3(G)\ge \dots,
\end{align*}
where $\gamma_1(G)\vcentcolon=G$ and $\gamma_i(G)\vcentcolon=[\gamma_{i-1}(G),G]$ for $i>1$. The \textbf{nilpotency class} of $G$ is $c$ if $$\gamma_c(G) \gneq \gamma_{c+1}(G)=\left\langle 1\right\rangle .$$

A $p$-group of order $p^n$ has nilpotency class at most $n-1$, and is called of \textbf{maximal class} in that case.

We treat 2-groups separately from the $p$-groups for odd primes $p$. The $2$-groups of maximal class are completely classified by the following theorem.

\begin{proposition}[{\cite[Kapitel~III, Satz~11.9]{hup}}]\label[proposition]{2-gps of max class}
	Each $2$-group of maximal class and order $2^n$ is isomorphic to one of the following groups
		\begin{itemize}
			\item a dihedral group, i.e., a group given by the presentation
			\begin{align*}
			D_{2^n}=\left\langle r,s~|~r^{2^{n-1}}=s^2=1,~s\inv rs=r\inv \right\rangle,
			\end{align*}
			\item a generalized quaternion group, i.e., a group given by the presentation
			\begin{align*}
			Q_{2^n}=\left\langle r,s~|~r^{2^{n-1}}=1,~s^2=r^{2^{n-2}},~s\inv rs=r\inv \right\rangle,
			\end{align*}
			\item a semidihedral group, i.e., a group given by the presentation
			\begin{align*}
			SD_{2^n}=\left\langle r,s~|~r^{2^{n-1}}=s^2=1,~s\inv rs=r^{2^{n-2}-1}\right\rangle.
			\end{align*}
		\end{itemize}
\end{proposition}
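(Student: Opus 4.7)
The plan is to follow the classical approach, which proceeds by induction on $n$ and reduces the problem to the classification of automorphisms of cyclic $2$-groups. Throughout, I would use the standard facts about $p$-groups of maximal class: the lower central quotients $\gamma_i(G)/\gamma_{i+1}(G)$ all have order $p$ for $1\le i\le n-1$, so in particular $|G'|=2^{n-2}$, $|G/G'|=4$, and $Z(G)=\gamma_{n-1}(G)$ has order $2$. In particular, $G$ is $2$-generated by \Cref{frattini group props}. Small cases ($n\le 3$) can be checked directly.

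The central step is to show that $G$ contains a cyclic subgroup of index $2$. By the inductive hypothesis applied to $G/Z(G)$, which is of maximal class and order $2^{n-1}$, that quotient is dihedral, semidihedral, or generalized quaternion, and in each case contains an element of order $2^{n-2}$. A careful lift of such an element to $G$, together with the observation that $|Z(G)|=2$ and some bookkeeping (possibly multiplying a candidate lift by a suitable element of $Z(G)$), yields an element $r\in G$ of order exactly $2^{n-1}$. This is the technical heart of the proof and the place where the induction really does work.

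Once $C=\langle r\rangle$ is a cyclic subgroup of index $2$, pick any $s\in G\setminus C$. Conjugation by $s$ defines an involution of $C$, so $s^{-1}rs=r^k$ with $k^2\equiv 1\pmod{2^{n-1}}$. For $n\ge 4$, $\mathrm{Aut}(C_{2^{n-1}})\cong C_2\times C_{2^{n-3}}$ has exactly four involutions, corresponding to
\[
k\in\{1,\ -1,\ 2^{n-2}-1,\ 2^{n-2}+1\}\pmod{2^{n-1}}.
\]
The case $k=1$ makes $G$ abelian, and a short commutator computation shows that $k=2^{n-2}+1$ gives $[r,s]=r^{2^{n-2}}\in Z(G)$, so $|G'|=2$ and $G$ has nilpotency class $2$, contradicting maximality of the class for $n\ge 4$. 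Thus only the cases $k=-1$ and $k=2^{n-2}-1$ remain.

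The final step is to determine $s^2$. Since $s^2\in C$ and $s^2$ is centralized by $s$, one checks that $s^2\in Z(G)=\langle r^{2^{n-2}}\rangle$, giving only two possibilities: $s^2=1$ or $s^2=r^{2^{n-2}}$. Replacing $s$ by $sr^j$ for a suitable $j$ absorbs ambiguities and shows that, up to isomorphism, the pair $(k,s^2)$ yields precisely the three presentations listed: $(k,s^2)=(-1,1)\rightsquigarrow D_{2^n}$, $(-1,r^{2^{n-2}})\rightsquigarrow Q_{2^n}$, and $(2^{n-2}-1,1)\rightsquigarrow SD_{2^n}$; the case $(2^{n-2}-1,r^{2^{n-2}})$ can be shown to be isomorphic to $SD_{2^n}$ via a change of generator $s\mapsto sr^j$. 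The main obstacle is clearly the first step—producing an element of order $2^{n-1}$—while the remainder is a finite case analysis once a cyclic maximal subgroup is in hand.
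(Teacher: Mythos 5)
The paper does not actually prove this statement---it is imported verbatim from Huppert (Kapitel III, Satz 11.9)---so your outline has to stand on its own. The second half of your argument is correct and standard: once a cyclic subgroup $C=\langle r\rangle$ of index $2$ is in hand, the analysis of $k$ with $k^2\equiv 1\pmod{2^{n-1}}$, the exclusion of $k=1$ and $k=2^{n-2}+1$ (the latter because it forces $|G'|=2$), the observation that $s^2\in C_C(s)=\langle r^{2^{n-2}}\rangle$, and the normalization $s\mapsto sr^j$ identifying $(2^{n-2}-1,\,r^{2^{n-2}})$ with $(2^{n-2}-1,\,1)$ all check out.

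The gap is precisely at the step you call the technical heart, and the repair you propose cannot work. Suppose $\bar r\in G/Z(G)$ has order $2^{n-2}$ and a lift $r$ satisfies $r^{2^{n-2}}=1$. The only other lift is $rz$ with $z$ the central involution, and $(rz)^{2^{n-2}}=r^{2^{n-2}}z^{2^{n-2}}=r^{2^{n-2}}=1$ because $2^{n-2}$ is even; so no adjustment by elements of $Z(G)$ ever raises the order. In that situation \emph{every} generator of the cyclic maximal subgroup $\bar C\leq G/Z(G)$ lifts to an element of order $2^{n-2}$, and the preimage $C$ of $\bar C$ is abelian (since $C/Z(G)$ is cyclic and $Z(G)\leq Z(C)$) but isomorphic to $C_{2^{n-2}}\times C_2$ rather than cyclic. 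This possibility has to be excluded by a structural argument, not bookkeeping. One standard way: since $C/Z(G)$ is cyclic one can write $C=\langle a\rangle\times Z(G)$ with $\ord(a)=2^{n-2}$ and $Z(G)=\langle t\rangle$; for $s\in G\setminus C$ the map $c\mapsto[c,s]$ is a homomorphism $C\to C$ with kernel $C_C(s)=Z(G)$, so its image $G'$ is cyclic of order $2^{n-2}$, and every cyclic subgroup of $C$ of that order has $a^{2^{n-3}}\neq t$ as its unique involution---contradicting $Z(G)=\gamma_{n-1}(G)\leq G'$. Without an argument of this kind the induction never produces the cyclic maximal subgroup, so as written the proof is incomplete.
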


We can now strengthen the result in \Cref{cor-dihedral-groups}. There we already noted that in dihedral groups, generalized quaternion groups, and semidihedral groups of order $2^n$, i.e., all $2$-groups of maximal class, the commutator of any pair of generators has a unique order, since these groups are isoclinic.

\begin{lemma}\label[lemma]{iso 2-gp of max class same stratum}
	Any $2$-generated $2$-group of maximal class and of order $2^n$ has \property and for each pair of generators $x,y,$ the order of the commutator $[x,y]$ is $2^{n-2}$.
\end{lemma}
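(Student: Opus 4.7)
The plan is to combine the classification of $2$-groups of maximal class (\Cref{2-gps of max class}) with the order-closedness machinery of \Cref{sec-order-closed}. By \Cref{2-gps of max class}, any such $G$ of order $2^n$ is isomorphic to $D_{2^n}$, $Q_{2^n}$, or $SD_{2^n}$. In each presentation $r$ has order $2^{n-1}$, and the relation $s\inv r s = r^{\epsilon}$ with $\epsilon\in\{-1,\,2^{n-2}-1\}$ yields $[r,s]\in\langle r^2\rangle$. Since $\{r,s\}$ generates $G$, the commutator subgroup $G'$ is generated by the conjugates of $[r,s]$ (\Cref{commutator-generated}), so $G'\subseteq\langle r^2\rangle$. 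For the reverse inclusion, note that $G/\langle r^2\rangle$ is generated by the two commuting involutions $\overline{r},\overline{s}$ and is therefore abelian, whereas $G$ itself is non-abelian; hence $G'=\langle r^2\rangle$, a cyclic group of order $2^{n-2}$.

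Since $G'$ is cyclic, it is abelian and therefore regular by \Cref{props of regular gps}(2), and so weakly order-closed by \Cref{props of regular gps}(1). \Cref{G' weakly order-closed implies property} then yields \property for $G$; moreover, inspecting the proof of that proposition shows that for every generating pair $x,y$ the order of $[x,y]$ equals $\exp(G')$, which is $2^{n-2}$ by the previous paragraph. This establishes both claims of the lemma.

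I do not expect any serious obstacle: the classification reduces the problem to three explicit families, each of which has a cyclic commutator subgroup that is handled directly by the framework of \Cref{sec-order-closed}. As a conceptually cleaner alternative one could invoke \Cref{cor-dihedral-groups} together with \Cref{lem-isoclinic-groups} to reduce all three cases to a single computation in, say, $D_{2^n}$, but the underlying calculation of $[r,s]$ is essentially the same.
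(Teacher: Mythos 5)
Your overall route is the same as the paper's: invoke the classification in \Cref{2-gps of max class}, show that $G'$ is cyclic (hence abelian, hence regular by \Cref{props of regular gps}, hence weakly order-closed), and conclude via \Cref{G' weakly order-closed implies property}. There is, however, one genuine gap, and it sits exactly where the quantitative claim of the lemma is decided. You establish $G'\subseteq\langle r^2\rangle$ correctly, but your argument for the reverse inclusion --- ``$G/\langle r^2\rangle$ is abelian whereas $G$ is non-abelian, hence $G'=\langle r^2\rangle$'' --- is not a valid deduction. Abelianness of the quotient only re-proves $G'\subseteq\langle r^2\rangle$, and non-abelianness of $G$ only gives $G'\neq 1$; together these are still compatible with $G'$ being a proper nontrivial subgroup of the cyclic group $\langle r^2\rangle$, such as $\langle r^4\rangle$. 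Since the value $\exp(G')=2^{n-2}$ is what feeds your final sentence, the second assertion of the lemma is not actually established by what you wrote: you would only get $\ord([x,y])=\exp(G')\le 2^{n-2}$.

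The fix is short and is precisely what the paper's proof does: compute the order of $[r,s]$ itself. In $D_{2^n}$ and $Q_{2^n}$ one has $[r,s]=r^{-2}$, which generates $\langle r^2\rangle$ and has order $2^{n-2}$; in $SD_{2^n}$ one has $[r,s]=r^{2^{n-2}-2}$, and since $2^{n-2}-2=2\,(2^{n-3}-1)$ with $2^{n-3}-1$ odd, $\gcd(2^{n-2}-2,\,2^{n-1})=2$, so again $[r,s]$ generates $\langle r^2\rangle$ and has order $2^{n-2}$. This gives $\langle r^2\rangle=\langle [r,s]\rangle\subseteq G'$, hence $G'=\langle r^2\rangle$ of exponent $2^{n-2}$, and the rest of your argument goes through. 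Your closing remark that one could reduce all three families to a single case via \Cref{cor-dihedral-groups} and \Cref{lem-isoclinic-groups} is a sound observation (and is how the paper frames this lemma as strengthening that corollary), but it does not remove the need for this one explicit order computation in at least one of the groups.
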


\begin{proof}
	Let $G$ be a $2$-group of maximal class and of order $2^n$. By \Cref{2-gps of max class}, it is isomorphic to a dihedral group, quaternion group or semidihedral group. In each case, we show that the commutator subgroup is regular and that for a pair of generators $r,s$ the commutator $[r,s]$ has order $2^{n-2}$. By \Cref{iso regular gp same stratum}, this proves the claim.
	
	For the semidihedral group $SD_{2^n}$, the commutator subgroup is generated by the set $\{ g\inv [r,s]g~|~g\in SD_{2^n}\}$. Since the commutator $[r,s]$ equals $r^{2^{n-2}-2}$, the commutator subgroup is cyclic. In particular, it is abelian and thus regular. The order of the commutator $[r,s]$ is $2^{n-2}$.
	
	For the dihedral group $D_{2^n}$ and the generalized quaternion group $Q_{2^n}$, the commutator subgroup is generated by $[r,s]=r^{-2}$. Hence the commutator subgroup is abelian and thus regular. The commutator $[r,s]$ has order $2^{n-2}$.
\end{proof}

We turn our attention to the odd primes.

\begin{lemma}\label[lemma]{lem maximal class p^n 5<=n<=p+1}
	Let $G$ be a $2$-generated $p$-group of maximal class and order $p^n$ for $5\le n\le {p+1}$ and an odd prime $p$. Then $G$ has \property and for each pair of generators $x,y,$ the order of the commutator $[x,y]$ equals $p$.
\end{lemma}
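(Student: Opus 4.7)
The plan is to apply \Cref{G' weakly order-closed implies property}, which reduces the lemma to showing that $G'$ is weakly order-closed and that $\exp(G')=p$. For the first ingredient, note that since $G$ has maximal class, its nilpotency class is $n-1$. A straightforward induction using $\gamma_i(G')=[\gamma_{i-1}(G'),G']\le[\gamma_i(G),G]=\gamma_{i+1}(G)$ shows $\gamma_i(G')\le\gamma_{i+1}(G)$ for every $i\ge 1$, so $G'$ has nilpotency class at most $n-2\le p-1<p$. By the classical result that every $p$-group of nilpotency class strictly less than $p$ is regular (see e.g.\ \cite[Chapter~3]{LGMcK}), $G'$ is regular, and hence weakly order-closed by \Cref{props of regular gps}. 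Applying \Cref{G' weakly order-closed implies property} then yields that $G$ has \property and $\ord([x,y])=\exp(G')$ for every generating pair $x,y$.

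It remains to compute $\exp(G')=p$. For the lower bound $\exp(G')\ge p$, observe that by maximal class the chief factor $G'/\gamma_3(G)$ is cyclic of order $p$, and the image of $[x,y]$ must generate it (else $G'=\gamma_3(G)$, contradicting maximal class). For the upper bound $\exp(G')\le p$, I would use the $p$-power calculus in regular $p$-groups: the inclusion $\mho^1(\gamma_i(H))\subseteq\gamma_{i+p-1}(H)$ holds for any regular $p$-group $H$. When $n\le p$, $G$ itself is regular (its class $n-1$ is less than $p$), and applying this with $H=G$ and $i=2$ gives $(G')^p\subseteq\gamma_{p+1}(G)\subseteq\gamma_n(G)=1$, the last inclusion using $n\le p+1$. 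For the boundary case $n=p+1$, $G$ has class exactly $p$ and so is not automatically regular; here one instead uses a careful Hall--Petresco expansion of $[x,y]^p$, killing terms of weight $\ge p+1$ via $\gamma_{p+1}(G)\subseteq\gamma_n(G)=1$ and controlling the remaining terms of weight $\le p$ (each appearing with exponent divisible by $p$ except for the top weight) via the regularity of $G'$ established in the first step.

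The main obstacle is precisely this boundary case $n=p+1$: neither $G$ itself nor $\gamma_p(G)$ is regular or trivial outright, so the Hall--Petresco expansion of $[x,y]^p$ has to be analyzed with care using only the regularity of $G'$. Once $\exp(G')=p$ is established, the remaining content of the lemma follows immediately from the reduction in the first step.
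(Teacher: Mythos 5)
Your first step is correct (and in fact more elaborate than necessary): the induction $\gamma_i(G')\le\gamma_{i+1}(G)$ does give that $G'$ has nilpotency class at most $n-2\le p-1$, hence is regular; but once one knows $\exp(G')=p$, every non-identity element of $G'$ has order exactly $p$, so the reduction through \Cref{G' weakly order-closed implies property} is not needed at all --- the paper simply observes that $[x,y]\neq 1$ because $G$ is non-abelian and $G'$ is the normal closure of $[x,y]$.

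The genuine gap is in your proof of the upper bound $\exp(G')\le p$. The inclusion $\mho^1(\gamma_i(H))\subseteq\gamma_{i+p-1}(H)$ is \emph{not} a valid statement for arbitrary regular $p$-groups $H$; the paper's own groups from \Cref{group theory: strata p-origamis for p>2} provide counterexamples. Take $p=5$, $n=5$, $k=2$, so $G=C_{125}\rtimes C_{25}$ with $s\inv rs=r^{6}$: this group has nilpotency class $3<p$ and is therefore regular, yet $\mho^1(\gamma_2(G))=\mho^1(\langle r^5\rangle)=\langle r^{25}\rangle\neq 1$ while $\gamma_{p+1}(G)=\gamma_6(G)=1$. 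The inclusion you invoke is a theorem about $p$-groups of \emph{maximal class} specifically --- it relies on the structure of the fundamental subgroup and on all lower central factors having order $p$ --- and it is exactly the content of Huppert's Hilfssatz III.14.14, which is what the paper cites to get $\exp(G')=p$ for $5\le n\le p+1$ in a single step. So your case $n\le p$ rests on a false general lemma, and your boundary case $n=p+1$ is only a sketch of a Hall--Petrescu computation which, if carried out, would amount to reproving that Hilfssatz. To close the gap, replace the second half of your argument by a citation (or a genuine proof) of the maximal-class statement $\mho^1(\gamma_i(G))\le\gamma_{i+p-1}(G)$; the lower bound $\ord([x,y])\ge p$ as you argue it is fine.
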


\begin{proof}
	Assume $G$ is a group of order $p^n$ of maximal class for $5\le n\le p+1$. Then the commutator subgroup $G'$ has exponent $p$ by \cite[Kapitel~III, Hilfssatz~14.14]{hup}. Hence all elements of $G'$ have either order $p$ or order 1. Let $(x,y)$ be a pair of generators of $G$. Then $G'$ is generated by elements of the form $g\inv [x,y]g$ for $g\in G$. Since $G$ is of maximal class, it is non abelian. We conclude that $[x,y]\ne 1$ has order $p$.
\end{proof}

\begin{lemma}\label[lemma]{iso p-gp of max class same stratum p odd}
	Any finite $2$-generated $p$-group of maximal class for $p$ odd has \property.
\end{lemma}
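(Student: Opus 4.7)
The plan is to reduce the assertion to the regularity of the commutator subgroup $G'$, after which \Cref{G' weakly order-closed implies property} immediately yields \property. I split the argument into two ranges according to the order $p^n$ of $G$.

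For $n \le p+1$, the nilpotency class of $G$ equals $n-1 \le p$, so the nilpotency class of $G'=\gamma_2(G)$ is at most $n-2 \le p-1$, i.e.\ strictly less than $p$. A classical theorem of P.~Hall (see e.g.\ \cite[Kapitel~III, Satz~10.2]{hup}) then guarantees that any $p$-group of nilpotency class less than $p$ is regular, so $G'$ is regular.

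For $n \ge p+2$, I invoke the classical structure theory of $p$-groups of maximal class for odd $p$. Let $G_1 := C_G(\gamma_2(G)/\gamma_4(G))$ denote the two-step centralizer: by Blackburn's theorem, $G_1$ is a uniquely determined maximal subgroup of $G$ of index $p$. In this range, $G$ has positive degree of commutativity, which gives the inclusion $[\gamma_2(G),\gamma_2(G)] \subseteq \gamma_4(G)$, and hence $G'=\gamma_2(G) \subseteq G_1$. Moreover, Blackburn's results imply that $G_1$ is itself regular for $n\ge p+2$ (cf.\ \cite[Chapter~3]{LGMcK} or \cite[Kapitel~III, \S14]{hup}), and subgroups of regular $p$-groups are regular, so $G'$ is regular in this case as well.

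In either range, $G'$ is a regular (in particular, weakly order-closed) $p$-group, and \Cref{G' weakly order-closed implies property} yields \property. The main obstacle is pinpointing and correctly invoking Blackburn's structure theorem in the regime $n \ge p+2$, specifically the positive degree of commutativity and the regularity of $G_1$; once these standard but technical ingredients from the theory of $p$-groups of maximal class are available, the argument is a clean reduction to the machinery already developed in \Cref{sec-order-closed}.
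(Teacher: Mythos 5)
Your proof is correct and follows essentially the same strategy as the paper's: both reduce \property to the regularity of $G'$ (and then invoke \Cref{G' weakly order-closed implies property}), treating small orders by a direct regularity criterion and orders $p^n$ with $n\geq p+2$ via a regular maximal subgroup of $G$ that contains $G'$. The only differences are cosmetic: you unify the paper's two small cases ($n\le 4$ and $5\le n\le p+1$) by bounding the nilpotency class of $G'$ by $p-1$ and applying Hall's criterion, and in the large case the inclusion $G'\subseteq G_1$ already follows from $G'\subseteq\Phi(G)\subseteq G_1$, so the appeal to positive degree of commutativity is superfluous.
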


\begin{proof}
	Let $G$ be any group of order $p^n$. If $n\le4$ then the order of the commutator subgroup $G'$ is smaller than $p^4$. Hence we have $|G'|\le p^p$. By \cite[Kapitel III Satz 10.2 b)]{hup}, the commutator subgroup is regular. Using \Cref{props of regular gps}, the claim follows from \Cref{G' weakly order-closed implies property}. In \Cref{lem maximal class p^n 5<=n<=p+1}, we showed the claim for groups of order $p^n$ of maximal class with $5\le n\le p+1$.
	
	Now assume $G$ has order $p^n$ and is of maximal class for $n> p+1$. Then there exists a maximal subgroup $H$ of $G$ which is regular (see~\cite[Kapitel~III, Satz~14.22]{hup}). Recall that the Frattini subgroup $\Phi(G)$ is the intersection of the set of maximal subgroups. We have the following inclusions
	$$ G'\subseteq \Phi(G)\subseteq H.$$
	Since subgroups of regular groups are regular as well, the commutator subgroup $G'$ is regular. By \Cref{iso regular gp same stratum}, the claim follows.
\end{proof}

We obtain the following proposition from  \Cref{iso 2-gp of max class same stratum} and \Cref{iso p-gp of max class same stratum p odd}.

\begin{proposition}\label[proposition]{iso p-gp of max class same stratum}
	Any finite $2$-generated $p$-group of maximal class has \property.
\end{proposition}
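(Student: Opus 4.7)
The plan is straightforward: this proposition is simply a bundling together of the two preceding lemmas, which handle the even and odd prime cases separately. The two lemmas together cover every possible prime $p$, and in each case they establish exactly \property for any finite $2$-generated $p$-group of maximal class. So the proof I would write amounts to a case split on whether $p$ equals $2$ or is odd, invoking the appropriate lemma in each case.

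More concretely, I would proceed as follows. Let $G$ be a finite $2$-generated $p$-group of maximal class. First I would dispose of the case $p = 2$: here \Cref{iso 2-gp of max class same stratum} applies directly and even gives the stronger conclusion that every commutator of a generating pair has order $2^{n-2}$, where $|G| = 2^n$. Then I would treat the case of odd $p$: here \Cref{iso p-gp of max class same stratum p odd} directly asserts \property.

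There is really no obstacle here, since all the work has already been done in the two cited lemmas; the proposition is just their conjunction. The only thing worth being careful about is to phrase the case split cleanly and to make clear that the two cases together exhaust all primes. If anything, one might remark in passing that the underlying arguments for the two cases are structurally similar (both ultimately rely on \Cref{G' weakly order-closed implies property} applied to a regular commutator subgroup, together with the classification of $2$-groups of maximal class in the even case and Huppert's structural results on maximal-class $p$-groups in the odd case), but no additional proof ingredient is needed.
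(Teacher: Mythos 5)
Your proposal is correct and matches the paper exactly: the paper likewise obtains this proposition as an immediate consequence of \Cref{iso 2-gp of max class same stratum} (for $p=2$) and \Cref{iso p-gp of max class same stratum p odd} (for odd $p$), with no further argument needed.
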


\subsubsection{Powerful \texorpdfstring{$p$}{p}-groups}

We introduce some further basics from the theory of $p$-groups to show that \property holds for the class of powerful $p$-groups.

A $p$-group $G$ is called \textbf{powerful} if either $p$ is odd and $G'\subseteq \mho^1(G)$, or $p=2$ and $G'\subseteq \mho^2(G)=\left\langle g^4~|~g\in G \right\rangle$. A normal subgroup $N\trianglelefteq G$ is \textbf{powerfully embedded} in $G$ if either $p$ is odd and $[N,G]\subseteq\mho^1(N)$, or $p=2$ and $[N,G]\subseteq\mho^2(N)$. In particular, any powerfully embedded $p$-group is powerful.

\begin{corollary}
	Any finite $2$-generated powerful $p$-group $G$ has a cyclic commutator subgroup $G'$.
\end{corollary}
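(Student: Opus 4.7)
My plan is to show that $G'$ is itself a powerful $p$-group whose Frattini quotient is cyclic, whence Burnside's basis theorem forces $G'$ to be cyclic. The key external ingredient is the standard theorem that the commutator subgroup of a powerful $p$-group is powerfully embedded in it (see, e.g., \cite{LGMcK}). To treat both primes uniformly, let $e=1$ when $p$ is odd and $e=2$ when $p=2$; powerfulness of $G$ then reads $G'\subseteq\mho^e(G)$, and powerful embedding of $G'$ in $G$ reads $[G',G]\subseteq\mho^e(G')$.

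First, I would observe that $G'$ is itself powerful, since $G''\subseteq[G',G]\subseteq\mho^e(G')$ is exactly the defining condition at the appropriate parity. Hence $\Phi(G')=G''\cdot\mho^1(G')=\mho^1(G')$ by \Cref{frattini group props}, so Burnside's basis theorem gives $d(G')=d(G'/\mho^1(G'))$.

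Second, I would pass to the quotient $\overline{G}\vcentcolon= G/\mho^e(G')$. Powerful embedding of $G'$ in $G$ forces $\overline{G'}$ to lie in the center of $\overline G$, so $\overline G$ is nilpotent of class at most $2$. In any class-$2$ nilpotent group generated by two elements $u,v$, the commutator map descends to a bilinear pairing on the abelian quotient by the center, valued in the central subgroup $\overline{G'}$; so $\overline{G'}=\langle[u,v]\rangle$ is cyclic. Applied to $\overline G$ with $u,v$ the images of a pair of generators $x,y$ of $G$, this shows $G'/\mho^e(G')$ is cyclic.

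Since $\mho^e(G')\subseteq\mho^1(G')$, the further quotient $G'/\mho^1(G')$ is cyclic as well, and combining with the first step yields $d(G')\le 1$, i.e., $G'$ is cyclic. The principal obstacle is justifying the powerful embedding of $G'$ in $G$, which is not proved in the preceding text but is foundational in the theory of powerful $p$-groups; it can be established by a standard induction along the lower central series, invoking the three-subgroup lemma together with the commutator identities available in powerful $p$-groups. If the authors prefer to avoid citing this result, an alternative is to prove by induction on $|G|$ that the defining inclusion for powerful embedding propagates from $\overline G/\mho^e(\overline{G'})$ back up to $G$ itself.
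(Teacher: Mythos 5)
Your proof is correct, but it takes a genuinely different route from the paper's. Both arguments rest on the same external fact — that $G'$ is powerfully embedded in $G$ (\cite[Theorems 1.1 and 4.1.1]{LM}) — but from there the paper simply invokes a second Lubotzky--Mann theorem (\cite[Theorems 1.10 and 4.1.10]{LM}): a powerfully embedded subgroup that is the normal closure of a subset is already generated by that subset. Combined with \Cref{commutator-generated}, this immediately gives $G'=\langle[x,y]\rangle$. You instead replace that second citation by an elementary argument: reducing modulo $\mho^e(G')$ makes $\overline{G'}$ central, so $\overline{G}$ has class at most $2$ and $\overline{G'}=\langle[\overline x,\overline y]\rangle$ is cyclic (here you could even shortcut the bilinearity discussion by quoting \Cref{commutator-generated} directly, since all conjugates of a central commutator coincide); then $G'/\mho^1(G')$, hence $G'/\Phi(G')$, is cyclic, and Burnside's basis theorem finishes. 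This is essentially an unpacking of the special case of \cite[Theorem 1.10]{LM} that the paper uses, so your version is longer but more self-contained and makes the mechanism visible; the paper's is shorter at the cost of a heavier citation. A minor simplification: you do not actually need the observation that $G'$ is powerful with $\Phi(G')=\mho^1(G')$, since $\mho^1(G')\subseteq\Phi(G')$ always holds, so cyclicity of $G'/\mho^1(G')$ already forces $d(G')\le 1$. All steps check out, including the uniform treatment of $p=2$ via $e=2$ and the inclusion $\mho^2(G')\subseteq\mho^1(G')$.
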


\begin{proof}
    Let $G$ be a powerful $p$-group with $2$-generating set $\{x,y\}$. Then the commutator subgroup $G'$ is powerfully embedded in $G$ by \cite[Theorem 1.1. and Theorem 4.1.1.]{LM}. Further, $G'$ is generated by all elements of the form $g\inv [x,y] g$ for $g\in G$ by \Cref{commutator-generated}. Now \cite[Theorem 1.10. and Theorem 4.1.10.]{LM} state that if a powerfully embedded subgroup of a powerful $p$-group is the normal closure of a subset, then it is generated by this subset. Thus, we conclude that $G'$ is generated by $[x,y]$, i.e., $G'$ is cyclic.
\end{proof}

\begin{corollary}\label[corollary]{iso powerful p-gp same stratum}
	Any finite $2$-generated powerful $p$-group has \property.
\end{corollary}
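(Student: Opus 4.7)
The plan is to deduce this corollary directly from the preceding corollary, which establishes that the commutator subgroup $G'$ of any finite $2$-generated powerful $p$-group $G$ is cyclic. Since a cyclic group is in particular abelian, it is regular (using that abelian $p$-groups are regular via \Cref{props of regular gps}, or directly), and regular $p$-groups are weakly order-closed by \Cref{props of regular gps}. Hence $G'$ is weakly order-closed, and \Cref{G' weakly order-closed implies property} applies to give \property.

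Alternatively, and perhaps more transparently, one can extract the conclusion already from the argument in the proof of the previous corollary. There it is shown that for \emph{any} pair of generators $x,y$ of $G$, the commutator subgroup $G'$ is the normal closure of $[x,y]$, and since $G'$ is powerfully embedded in $G$, one concludes by \cite[Theorem~1.10. and Theorem~4.1.10.]{LM} that $G'=\langle[x,y]\rangle$. Consequently $\ord([x,y])=|G'|$, which is a number depending only on $G$ and not on the choice of generators. This immediately gives \property with $n=|G'|$.

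There is no real obstacle here; the content of the statement has essentially been proven in establishing the cyclicity of $G'$, and the present corollary is just the translation of that fact into the language of \Cref{def property C}. I would write the proof as a single short paragraph, choosing the first route (invoking \Cref{G' weakly order-closed implies property}) to emphasize the structural parallel with the previous subsections.
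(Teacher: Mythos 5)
Your first route is exactly the paper's proof: the paper argues that $G'$ is cyclic, hence regular, and then invokes \Cref{iso regular gp same stratum} (which is the same proposition as \Cref{G' weakly order-closed implies property}). The alternative you sketch via \cite[Theorem~1.10]{LM} is also valid, but since you say you would write up the first route, your proposal matches the paper's argument.
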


\begin{proof}
    $G'$ is cyclic, so in particular, $G'$ is regular. Using \Cref{iso regular gp same stratum}, the claim follows.
\end{proof}

\begin{proposition}\label[proposition]{iso p-gp powerful G' same stratum}
	Let $G$ be a finite $2$-generated $p$-group such that $G'$ is powerful. Then $G$ has \property.
\end{proposition}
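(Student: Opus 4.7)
The plan is to reduce this proposition to the already-proven \Cref{G' weakly order-closed implies property} by showing that every powerful $p$-group is weakly order-closed. Under the hypothesis, $G'$ is powerful, so this reduction will make $G'$ weakly order-closed, and then \property for $G$ follows immediately.

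The key input is a classical fact from the Lubotzky--Mann theory of powerful $p$-groups: for any powerful $p$-group $H$ and any $i\ge 0$, the set
\[
\{h\in H~|~h^{p^i}=1\}
\]
is actually a subgroup of $H$ (it coincides with $\Omega_i(H)$ in the sense of the subgroup generated by elements of order dividing $p^i$). This is established, for $p$ odd, via the identity $[\Omega_i(H),H]\subseteq\mho^i(\Omega_i(H))$ (and its $p=2$ analogue), which ensures that the product of two elements killed by $p^i$ is again killed by $p^i$; see the relevant statements in \cite{LM} (specifically the chapters on omega and agemo subgroups of powerful $p$-groups). This is precisely the statement that a powerful $p$-group is weakly order-closed in the sense of our \Cref{sec-order-closed}.

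Applying this to $G'$: because $G'$ is powerful by hypothesis, for each $k\ge 0$ the set of elements of $G'$ of order at most $p^k$ is closed under multiplication, so $G'$ is weakly order-closed. Now \Cref{G' weakly order-closed implies property} applies and shows that $G$ has \property, finishing the argument.

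There is no genuine obstacle here; the only substantive step is the citation of the Lubotzky--Mann fact on $\Omega_i$ in powerful $p$-groups. Conceptually, the proof traces exactly the path indicated in the diagram of \Cref{thm-groups-B} running through the intermediate node ``$G'$ weakly order-closed'' from ``$G'$ powerful'' to ``$G$ has \property''.
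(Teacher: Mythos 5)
Your reduction takes a genuinely different route from the paper's, and it has a real gap at its only substantive step. The fact you invoke --- that every powerful $p$-group $H$ is weakly order-closed, i.e.\ that $\{h\in H \mid h^{p^i}=1\}$ is a subgroup, equivalently $\exp\Omega_i(H)\le p^i$ --- is \emph{not} a classical statement from \cite{LM}; the Lubotzky--Mann papers control the agemo subgroups of powerful groups, not the omega subgroups. For odd $p$ the statement you want is a much later and harder theorem (H\'ethelyi--L\'evai for $i=1$, Fern\'andez-Alcober in general), so even in that case your ``routine citation'' is the entire content of a separate paper. Worse, for $p=2$ the statement is \emph{false}: Fern\'andez-Alcober exhibits powerful $2$-groups (with $G'\subseteq\mho^2(G)$) in which $\Omega_1$ has exponent $4$, so a product of two involutions can have order $4$. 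Hence ``powerful $\Rightarrow$ weakly order-closed'' fails at the prime $2$ and your reduction to \Cref{G' weakly order-closed implies property} collapses there. This is also why the diagram in \Cref{thm-diagram} draws the arrow from ``$G'$ powerful'' directly to \property rather than factoring it through ``$G'$ weakly order-closed''.

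The paper's proof avoids omega subgroups entirely and works uniformly in $p$. It uses the genuinely classical agemo result \cite[Theorems 1.9 and 4.1.9]{LM}: in a powerful $p$-group, $\mho^m$ is generated by the $p^m$-th powers of any generating set. Since $G'$ is generated by the conjugates of $[x,y]$ (\Cref{commutator-generated}), all of order $p^m\vcentcolon=\ord([x,y])$, one gets $\mho^m(G')=\{1\}$ and hence $\exp(G')=p^m$; thus $\ord([x,y])=\exp(G')$ for every pair of generators, which is \property. If you want to keep your route, you must either restrict to odd $p$ and cite the omega-subgroup theorem properly, or find a different argument for $p=2$.
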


\begin{proof}
    Let $G$ be a finite $2$-generated $p$-group such that $G'$ is powerful. Let $\{x,y\}$ be a generating set. Denote the order of $[x,y]$ by $p^m$. Recall that $G'$ is generated by the set $\{g\inv [x,y]g~|~g\in G\}$ by \Cref{commutator-generated}. Since $G'$ is powerful, we apply \cite[Theorem 1.9. and Theorem 4.1.9.]{LM} stating that in a powerful $p$-group, any agemo subgroup is generated by the corresponding powers of a given set of generators, and deduce
    \begin{align*}
        \mho^m(G')&
        =\langle  g^{p^m}~|~g\in G'\rangle
        =\langle \left(g\inv [x,y] g\right)^{p^m}|~g\in G\rangle
        =\{1\}.
    \end{align*}
    In particular, the exponent of $G'$ equals ${p^m}$, and hence the order of $[x,y]$. As $\{x,y\}$ was an arbitrary pair of generators of $G$, this proves the claim.
\end{proof}

As the commutator subgroup of a powerful $p$-group is powerful by \cite[Theorem 1.1]{LM}, \Cref{iso powerful p-gp same stratum} is also a consequence of \Cref{iso p-gp powerful G' same stratum}.

We will see in \Cref{section infinite origamis}, that the result of the proposition can be extended to pro-$p$ groups.

\subsubsection{Power-closed \texorpdfstring{$p$}{p}-groups} Recall from \Cref{sec-order-closed} that we call a $p$-group weakly order-closed, if products of elements of order at most $p^k$ have order at most $p^k$ for all $k\geq 0$. We call a $p$-group \textbf{weakly power-closed}, if products of $p^k$-th powers are $p^k$-th powers for all $k\geq 0$. Such groups generalize \textbf{power-closed} $p$-groups (\cite{Mann}), for which all sections (i.e., subquotients) have to be weakly power-closed in our sense. As quotients of weakly power-closed groups are automatically weakly power-closed, a $p$-group is power-closed if and only if all its subgroups are weakly power-closed. In \cite{wil}, the class of $p$-groups we call weakly power-closed has been called $\mathcal{P}_p$. 

It is known that order-closed $p$-groups generalize regular $p$-groups, and power-closed $p$-groups generalize order-closed $p$-groups (see \cite{Mann}).

Recall that in \Cref{G' weakly order-closed implies property}, we have shown that $2$-generated $p$-groups $G$ for which $G'$ is weakly order-closed have \property.

\begin{example}\label[example]{ex. power-closed gp not property C} Using the \texttt{GAP} code in \Cref{code-powerclosed} (in \Cref{sec-code}), we find instances of a $2$-group $G$ such that $G'$ is weakly power-closed, but which does have generators $x,y$ such that $\ord([x,y])\neq\ord([x,y^3])$. As $G$ is a $2$-group, $x,y^3$ is a pair of generators, as well, so $G$ does not have \property.

An example for such a situation is the case where $x,y\in S_{16}$ are the permutations 
\begin{align*}
    (1,13,2,14)(3,16,4,15)(5,9,7,11,6,10,8,12)
    \ ,\\
    (1,16,6,11,4,14,7,9,2,15,5,12,3,13,8,10)
    \ ,
\end{align*}
and $G=\langle x,y\rangle$; then
\begin{align*}
[x,y] &= (1,5,2,6)(3,7,4,8)(9,13,10,14)(11,16,12,15) 
\ ,\\
[x,y^3] &= (1,6)(2,5)(3,7)(4,8)(9,15)(10,16)(11,14)(12,13)
\end{align*}
have order $4$ and $2$, respectively.

However, this counterexample is not a power-closed $p$-group, since it has subgroups which are not weakly power-closed. Such subgroups can be found using computer algebra codes, as the ones described in \Cref{sec-code}.
\end{example}

\begin{corollary}\label[corollary]{cor. weakly power-closed gp not property C}
    There are $2$-groups with weakly power-closed commutator subgroup which do not have \property.
\end{corollary}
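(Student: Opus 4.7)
The plan is to observe that the corollary follows directly from the preceding \Cref{ex. power-closed gp not property C}. The example exhibits a specific $2$-group $G=\langle x,y\rangle\subset S_{16}$ together with explicit permutations $x,y$, and states (on the basis of the \texttt{GAP} computation in \Cref{code-powerclosed}) that $G'$ is weakly power-closed while $\ord([x,y])\neq\ord([x,y^3])$. So to deduce the corollary, I only need to verify that this really does refute \property for $G$.

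The one small gap to bridge is that \property is a statement about pairs of generators, whereas the example only compares $[x,y]$ and $[x,y^3]$. Since $G$ is a finite $2$-group, $\ord(y)$ is a power of $2$, so $\gcd(3,\ord(y))=1$ and therefore $\langle y^3\rangle=\langle y\rangle$. Hence $\langle x,y^3\rangle=\langle x,y\rangle=G$, i.e., $\{x,y^3\}$ is indeed another $2$-generating set of $G$. Since the orders of the corresponding commutators are $4$ and $2$, the two generating sets yield commutators of different orders, so $G$ fails \property in the sense of \Cref{def property C}.

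Combining this with the fact (asserted in the example and to be read off from the \texttt{GAP} verification) that $G'$ is weakly power-closed, $G$ is a $2$-group witnessing the statement. The only substantive step in the argument is the computer-algebraic check behind \Cref{ex. power-closed gp not property C}; everything else is the simple observation about $y$ and $y^3$ generating the same cyclic subgroup in a $2$-group. I expect no real obstacle beyond trusting the \texttt{GAP} code, which is exactly what the preceding example and \Cref{sec-code} are designed to provide.
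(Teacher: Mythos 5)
Your proposal is correct and matches the paper exactly: the corollary is an immediate consequence of \Cref{ex. power-closed gp not property C}, and the paper itself supplies the same bridging remark that, since $G$ is a $2$-group, $\{x,y^3\}$ is again a generating pair, so the differing commutator orders refute \property. Nothing further is needed.
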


However, let us contrast this with the case of minimal non-power-closed or minimal non-order-closed $p$-groups, where minimal means that all proper sections are power-closed or order-closed, respectively.

\begin{lemma}\label[lemma]{minimal non-P_i gps}
	Let $G$ be a minimal non-power-closed $p$-group or a minimal non-order-closed $p$-group. Then $G$ has \property and for each pair of generators $x,y,$ the order of the commutator $[x,y]$ equals $p$.
\end{lemma}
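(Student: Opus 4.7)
The plan is to argue both cases in parallel by exploiting the minimality condition. First I would observe that $G$ must be non-abelian, since every abelian $p$-group is regular and hence both order-closed and power-closed (\Cref{props of regular gps}), so $G$ itself could not violate the hypothesis. In particular, $G'\neq 1$; and since $G/G'\neq 1$ in a $p$-group, we have $G'\subsetneq G$, so $G'$ is a proper section of $G$. By the minimality hypothesis, $G'$ is itself order-closed (resp.\ power-closed).

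In the minimal non-order-closed case, order-closedness of $G'$ implies in particular weak order-closedness, and \Cref{G' weakly order-closed implies property} directly yields that $G$ has \property, with common commutator order $\exp(G')$. To pin down this common value to $p$, I would argue by contradiction. Suppose $\exp(G')\ge p^2$; then $\mho^{1}(G')$ is a nontrivial characteristic subgroup of $G'$, hence normal in $G$, and $G/\mho^{1}(G')$ is a proper quotient. By minimality this quotient is order-closed. On the other hand, $G$ itself fails to be weakly order-closed, so there exist elements $a,b\in G$ with $\ord(a),\ord(b)\le p^k$ but $\ord(ab)>p^k$; by minimality $\langle a,b\rangle = G$ (else this proper subgroup would be a proper section, hence weakly order-closed, contradicting the choice of $a,b$). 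Lifting the corresponding relation from $G/\mho^{1}(G')$, where order-closedness does hold, and comparing it with the order of $ab$ in $G$ yields a contradiction with $\exp(G')\ge p^2$.

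In the minimal non-power-closed case, a direct appeal to \Cref{G' weakly order-closed implies property} is unavailable, since power-closedness is strictly weaker than order-closedness (cf.\ \Cref{ex. power-closed gp not property C}). Here I would establish $\exp(G')=p$ first by an analogous contradiction argument: assume $\exp(G')\ge p^2$, and use that every proper section of $G$ is weakly power-closed while $G$ itself is not. Choosing a witnessing pair $a,b\in G$ of minimal exponent $k$ for which $a^{p^k}b^{p^k}$ is not a $p^k$-th power, minimality again forces $\langle a,b\rangle = G$, so $(a,b)$ is a generating pair; examining an appropriate proper quotient (e.g.\ $G/\mho^{1}(G')$ or $G/Z$ for a central subgroup $Z\le G'$ of order $p$) and lifting the $p^k$-th-power expression available there, one derives a contradiction with $\exp(G')\ge p^2$. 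Once $\exp(G')=p$ is obtained, every nontrivial element of $G'$ has order $p$, and since $G$ is non-abelian, $[x,y]\neq 1$ for any generating pair $(x,y)$; hence $\ord([x,y])=p$ for all generating pairs, which simultaneously establishes \property and fixes the common commutator order to $p$.

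The main obstacle is the exponent-reduction step. Leveraging the minimality hypothesis to force $\exp(G')=p$ requires a careful interplay between the fact that every proper section satisfies the closure property and the fact that $G$ does not; the delicate point is choosing the right normal subgroup (a central subgroup of order $p$, or $\mho^{1}(G')$) so that the lifting of the closure relation from the proper quotient clashes with the witnesses of non-closure in $G$. I expect this to be the most technical piece, and it may be streamlined by invoking structural results on minimal non-order-closed and minimal non-power-closed $p$-groups from the literature.
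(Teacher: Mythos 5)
Your overall architecture is right, and the final step matches the paper exactly: once $\exp(G')=p$ is known, $G'$ is regular (exponent-$p$ groups are regular, and for $p=2$ they are abelian), so \Cref{G' weakly order-closed implies property} gives \property in both cases, and non-abelianness of $G$ forces $[x,y]\neq 1$, hence $\ord([x,y])=p$. This also dissolves your worry in the power-closed case: you do not need $G'$ to be weakly order-closed a priori; you get it for free once the exponent is pinned down.

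The genuine gap is precisely the step you flag as "the most technical piece": the proof that $\exp(G')=p$. Your sketched contradiction does not close. In the order-closed case, passing to $G/\mho^{1}(G')$ and using order-closedness there only tells you that $(ab)^{p^k}\in\mho^{1}(G')$ for your witnessing pair $a,b$ with $\langle a,b\rangle=G$; there is no evident clash between this membership statement and the assumption $\exp(G')\geq p^2$, and you do not say how the "lifting" would produce one. The same issue recurs in the power-closed case. The paper does not attempt this reduction at all: it invokes the structural results of Mann (\cite[Theorems 3 and 6]{Mann}) on minimal non-power-closed and minimal non-order-closed $p$-groups, which assert directly that $\Phi(G)$ has exponent $p$ and that $Z(G)\neq G$; since $G'\subseteq\Phi(G)$ by \Cref{frattini group props}, this immediately yields $\exp(G')=p$. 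So the heart of the lemma is exactly the external input you defer to "structural results from the literature"; without it (or a genuine replacement argument), your proof is incomplete.
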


\begin{proof}
    Let $G$ be a minimal non-power-closed $p$-groups or minimal non-order-closed $p$-group. By \cite[Theorem 3 and Theorem 6]{Mann}, the Frattini group $\Phi(G)$ has exponent $p$ and the center $Z(G)\neq G$. As $G'$ is contained in $\Phi(G)$, we conclude that $G'$ has exponent $p$. Hence $G'$ is regular. \Cref{iso regular gp same stratum} implies that $G$ has \property. For a pair of generators $x,y$ of $G$, the commutator subgroup $G'$ is generated by all conjugates of $[x,y]$. Since $G'$ is not trivial, the commutator $[x,y]$ has exponent $p$.
\end{proof}

Let us summarize our results on groups with \property.

\begin{theorem}\label[theorem]{thm-diagram} The implications and the nonimplication visualized in the following diagram are true for any finite $2$-generated $p$-group $G$. In particular, all $p$-groups falling into the classes above the gray line have \property.
\diagram
\end{theorem}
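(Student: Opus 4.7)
The plan is to verify each arrow of the diagram individually, by either citing a previously proven result or by supplying a short argument from the definitions; the theorem is essentially a compilation of facts that have already been established throughout \Cref{section: gp thy iso deck groups}.

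First I would dispatch the internal hierarchy arrows among \emph{regular}, \emph{order-closed}, and \emph{power-closed}. The implication "regular $\Rightarrow$ order-closed'' (for $G$ and analogously for $G'$) is \Cref{props of regular gps}(1), and "order-closed $\Rightarrow$ power-closed'' was noted in the paragraph introducing power-closed groups with reference to \cite{Mann}. The vertical arrows from a property of $G$ to the same property of $G'$ are handled as follows: for regularity, it was recorded in the text directly after the definition that the commutator subgroup of a regular $p$-group is regular; for order-closed and power-closed, both pass to arbitrary subgroups, since any section of $G'$ is a section of $G$. The arrows "$G'$ order-closed $\Rightarrow$ $G'$ weakly order-closed'' and "$G'$ power-closed $\Rightarrow$ $G'$ weakly power-closed'' are immediate from the definitions, as order-closed/power-closed groups are those for which all sections, in particular the group itself, are weakly order-closed/power-closed.

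Next I would handle the arrows leaving the classes \emph{maximal class} and \emph{powerful}. The implication "$G$ of maximal class $\Rightarrow$ $G'$ regular'' is extracted from the proofs of \Cref{iso 2-gp of max class same stratum} (for $p=2$, where $G'$ is shown to be cyclic, hence abelian, hence regular) and \Cref{iso p-gp of max class same stratum p odd} (for odd $p$, where the regularity of $G'$ is obtained via Huppert's results). The implication "$G$ powerful $\Rightarrow$ $G'$ regular'' follows from the observation in the proof of \Cref{iso powerful p-gp same stratum} that $G'$ is cyclic (and cyclic groups are regular for any prime), and "$G$ powerful $\Rightarrow$ $G'$ powerful'' is the cited result \cite[Theorem 1.1]{LM}.

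Finally I would verify the arrows pointing into \property and the one nonimplication. The two positive implications "$G'$ weakly order-closed $\Rightarrow$ $G$ has \property'' and "$G'$ powerful $\Rightarrow$ $G$ has \property'' are \Cref{G' weakly order-closed implies property} and \Cref{iso p-gp powerful G' same stratum}, respectively. The crossed-out arrow "$G'$ weakly power-closed $\not\Rightarrow$ $G$ has \property'' is \Cref{cor. weakly power-closed gp not property C}, substantiated by the explicit \texttt{GAP}-generated counterexample in \Cref{ex. power-closed gp not property C}. The main obstacle is really only this nonimplication, since producing a concrete $2$-group whose commutator subgroup is weakly power-closed yet which admits two pairs of generators with commutators of different orders is not obvious from first principles; this hurdle is overcome by the computer search recorded in the example.
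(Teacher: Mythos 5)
Your proposal is correct and follows essentially the same route as the paper: the theorem is proved by assembling the previously established lemmas and cited facts arrow by arrow, with the same references for each implication (the hierarchy from \cite{Mann} and \Cref{props of regular gps}, inheritance by subgroups for the vertical arrows, the regularity/powerfulness of $G'$ from the maximal-class and powerful cases, \Cref{G' weakly order-closed implies property} and \Cref{iso p-gp powerful G' same stratum} for the arrows into \property, and \Cref{cor. weakly power-closed gp not property C} for the crossed-out arrow). No substantive difference from the paper's proof.
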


\begin{proof} The implications regular $\Rightarrow$ order-closed $\Rightarrow$ power-closed are due to \cite{Mann}. Each of these properties is inherited by subgroups, so by $G'$ from $G$. By our definition, the properties weakly order-closed and power-closed generalize order-closed and power-closed, respectively. $p$-groups of maximal class or powerful $p$-groups have regular commutator subgroups as explained in the proof of \Cref{iso p-gp of max class same stratum p odd} and \Cref{iso powerful p-gp same stratum}. The commutator subgroup of a powerful $p$-group is powerful by \cite[Theorem 1.1, Theorem 4.1.1]{LM}.

The results that $G'$ powerful or weakly order-closed imply \property, are \Cref{iso p-gp powerful G' same stratum} and \Cref{G' weakly order-closed implies property}. The fact that $G'$ weakly power-closed does not imply \property is \Cref{cor. weakly power-closed gp not property C}. 
\end{proof}

\begin{rem}\label[remark]{rem-small-groups} We note that, in particular, all $2$-generated $p$-groups of order at most $p^{p+2}$ or of nilpotency class at most $p$ have \property. In the first case, $G'$ has order at most $p^p$, because it lies in the Frattini subgroup which has index $p^2$ in $G$, so $G'$ is regular by \cite[Satz 10.2 b)]{hup}. Similarly, in the second case, $G$ is regular by \cite[Satz 10.2 a)]{hup}.
\end{rem}

It is an open questions, whether $G$ or $G'$ being power-closed implies \property for a $p$-group $G$. To study this and similar questions, we offer a reduction argument.

\begin{lemma}\label[lemma]{2 cyclic subgps of order p}
    Let $\mathcal{F}$ be a family of finite $2$-generated $p$-groups which is closed under quotients and such that \property holds for all members of $\mathcal{F}$ with cyclic center. Then \property holds for all members of $\mathcal{F}$.
\end{lemma}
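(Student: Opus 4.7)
The plan is to argue by induction on $|G|$, reducing to the case of cyclic center. Let $G\in\mathcal F$ and suppose inductively that every member of $\mathcal F$ of smaller order has \property. If $Z(G)$ is cyclic, then $G$ has \property by hypothesis. Otherwise, as $Z(G)$ is a noncyclic finite abelian $p$-group, its socle contains a copy of $C_p\times C_p$, so $Z(G)$ contains at least two distinct subgroups $N_1,N_2$ of order $p$; being distinct subgroups of prime order, $N_1\cap N_2=\{1\}$. Each $N_i$ is normal in $G$ (being central), so $G/N_1,G/N_2$ lie in $\mathcal F$ by closure under quotients, and both have strictly smaller order than $G$, so by induction each has \property (trivially with $n=1$ in case it happens to be cyclic).

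Now fix two pairs of generators $(x_1,y_1),(x_2,y_2)$ of $G$. Their images in $G/N_i$ still generate, and by the inductive hypothesis the two commutators have a common order $p^{a_i}$ in $G/N_i$ for some $a_i\geq 0$. Since $|N_i|=p$, each order in $G$ can differ from its image by a factor of at most $p$, which gives for all $i,j\in\{1,2\}$ the two-sided squeeze
\[
\ord([x_j,y_j])\in\{p^{a_i},p^{a_i+1}\}.
\]

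It then remains to conclude $\ord([x_1,y_1])=\ord([x_2,y_2])$ from these constraints. If $a_1\neq a_2$, say $a_1<a_2$, then combining $\ord([x_j,y_j])\leq p^{a_1+1}$ with $\ord([x_j,y_j])\geq p^{a_2}$ forces $a_2=a_1+1$, and both orders equal $p^{a_2}$. If instead $a_1=a_2=a$ and the orders disagreed, say $\ord([x_1,y_1])=p^a$ and $\ord([x_2,y_2])=p^{a+1}$, then $[x_2,y_2]^{p^a}$ would be a nontrivial element of $G$ whose images in $G/N_1$ and $G/N_2$ both vanish, so it would lie in $N_1\cap N_2=\{1\}$, a contradiction. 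This finishes the inductive step. The main obstacle is precisely this squeezing argument: it succeeds cleanly only because the two chosen order-$p$ subgroups of $Z(G)$ have trivial intersection, which is exactly the content of the noncyclicity of the center.
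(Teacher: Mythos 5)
Your proof is correct and follows essentially the same strategy as the paper's: induct on the order, and for noncyclic center pick two trivially intersecting central subgroups $N_1,N_2$ of order $p$ and exploit \property of the quotients $G/N_i$. The only (immaterial) difference is in the endgame: the paper selects the one quotient $G/N_i$ in which the longer commutator keeps its order and reads off equality there, whereas you run a two-sided squeeze over both quotients and rule out the bad case via $N_1\cap N_2=\{1\}$ — which also lets you avoid treating the abelian case separately.
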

	
\begin{proof}
    Assume \property holds for all abelian groups and all members of $\mathcal{F}$ with cyclic center, and consider a non-abelian group $G\in\mathcal{F}$ whose center is not cyclic such that \property holds for any member of $\mathcal{F}$ of smaller cardinality.
    
	Since the center of $G$ has rank at least two, we can choose two trivially intersecting central cyclic subgroups $N_1,N_2$ of order $p$. We consider pairs of generators $x,y$ and $x',y'$ of $G$, and we define
	\begin{align*}
	&c\vcentcolon=[x,y],~~\phantom{..} m\vcentcolon= \ord([x,y]), \\
	&c'\vcentcolon=[x',y'],~ m'\vcentcolon= \ord([x',y']).
	\end{align*} 
	Without loss of generality, we may assume $m\geq m'\geq p$.
	Now $c^{\frac{m}{p}}$ has order $p$. Since $N_1$ and $N_2$ intersect trivially, $c^{\frac{m}{p}}$ lies in at most one of these subgroups. We may assume that $c^{\frac{m}{p}}\notin N_1$. Consider the canonical epimorphism $\overline{\phantom{X}}:G\to G/N_1$. The order of $\overline{c}$ is equal to $m$ because $c^{\frac{m}{p}}\notin N_1$. Now $\overline{x}, \overline{y}$ and $\overline{x'}, \overline{y'}$ are pairs of generators of the $p$-group $G/N_1$. By our assumptions, $G/N_1$ has \property, so
	$$m=\ord(\overline{c})=\ord(\overline{c'}).$$
	Thus we conclude that the $m'$ is either $m$ or $m\cdot p$, so as $m\geq m'$, we conclude that $m$ and $m'$ coincide, and $G$ has \property.
	
	An induction using this argument proves the assertion.
\end{proof}

\begin{corollary} $G$ being power-closed or $G'$ being power-closed implies \property if it implies \property together with the assumption that $G$ has cyclic center.
\end{corollary}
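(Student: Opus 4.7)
The plan is to deduce this corollary directly from \Cref{2 cyclic subgps of order p}. Given that the hypothesis is phrased as a conditional, all we need to do is exhibit a family $\mathcal{F}$ of finite $2$-generated $p$-groups, closed under quotients, which contains all groups under consideration and to which the lemma applies.

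First, suppose that $G$ being power-closed implies \property whenever $G$ has cyclic center. I would take $\mathcal{F}$ to be the class of all finite $2$-generated power-closed $p$-groups. To invoke \Cref{2 cyclic subgps of order p}, I need that $\mathcal{F}$ is closed under quotients. This is immediate from the definition of power-closed: by \cite{Mann}, power-closedness is defined via all sections being weakly power-closed, and a section of a quotient of $G$ is in particular a section of $G$. Moreover, any quotient of a $2$-generated group is $2$-generated. Hence $\mathcal{F}$ is closed under quotients, and the hypothesis guarantees \property for all members of $\mathcal{F}$ with cyclic center. The lemma then delivers \property for all of $\mathcal{F}$.

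Next, suppose that $G'$ being power-closed implies \property whenever $G$ has cyclic center. I would take $\mathcal{F}'$ to be the class of all finite $2$-generated $p$-groups $G$ such that $G'$ is power-closed. I again need quotient-closure: for a normal subgroup $N\trianglelefteq G$, the commutator subgroup of $G/N$ is $(G/N)'=G'N/N\cong G'/(G'\cap N)$, which is a quotient of $G'$. Since power-closedness passes to quotients (as quotients are sections), $(G/N)'$ is again power-closed, so $G/N\in\mathcal{F}'$. Applying \Cref{2 cyclic subgps of order p} finishes the argument.

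The statement is essentially a reformulation packaged through the reduction lemma, so I do not anticipate any real obstacle beyond verifying the quotient-closure of the respective families; both verifications are routine given the definitions in \Cref{sec-order-closed} and the cited results of \cite{Mann}.
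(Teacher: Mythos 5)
Your proof is correct and follows essentially the same route as the paper: both arguments invoke \Cref{2 cyclic subgps of order p} and reduce to checking that the family of power-closed $p$-groups, respectively the family of $p$-groups with power-closed commutator subgroup, is closed under quotients, using that sections of quotients are sections and that $(G/N)'$ is a quotient of $G'$. Your write-up merely spells out these verifications in more detail than the paper does.
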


\begin{proof} The family of power-closed $p$ groups is closed under taking arbitrary quotients. The same is true for the family of $p$-groups with power-closed commutator subgroup, since for any group, the commutator subgroup of a quotient is a quotient of the commutator subgroup. 
\end{proof}

\subsubsection{A family of counterexamples}\label{section: counterexample} We conclude our discussion of \property by constructing $2$-generated $p$-groups for all primes $p$ which do not have \property: for each such group, we exhibit two pairs of generators with different commutator order.

As these will be realized as certain subgroups of a $p$-Sylow subgroup of the symmetric group on $p^r$ letters, for some $r\geq 0$, let us first recall a description of the relevant Sylow subgroups. Recall that we multiply permutations from the left.

\begin{definition} For any prime $p$ and any $r\geq 0$, let $\mathbf{P_{p,r}}$ be the subgroup of the symmetric group $S_{p^r}$ generated by the elements
$$\{e_{i,j} \mid 1\leq i\leq r, 1\leq j\leq p^{i-1}\}\ ,$$
where each $e_{i,j}$ is a product of disjoint $p$-cycles defined by
\begin{equation}\label{eq-def-eij}
e_{i,j} :=
\prod_{k=p^{r+1-i} (j-1)+1}
^{p^{r+1-i} (j-1)+p^{r-i}}
(k, k+p^{r-i}, \dots,  k+p^{r-i}(p-1) )
\ .
\end{equation}
\end{definition}

We can think of the permutations $e_{i,j}$ as graph automorphisms of the perfect $p$-ary tree with $p^r$ leaves (as in \Cref{fig:tree} below for $p=r=3$). Let $N$ be the $j$-th node of this tree at level $i-1$ (where the root node is at level $0$), then we define a tree automorphism by fixing all nodes which are not descendants of $N$, while rotating the subtrees starting at the $p$ direct descendants of $N$ cyclically to the right. The action of this automorphism on the $p^r$ leaves of the tree corresponds exactly to the action of $e_{i,j}$.
\begin{figure}[ht]
    \centering
\begin{tikzpicture}[remember picture,draw=black]
\def\r{4}
\def\p{3}
\newcommand\pos[2]{(%
{  ( #2 - 1 - (pow(\p,#1-1)-1)/2 ) * pow(\p,\r-(#1)) * 0.4  },%
{  -(#1) * 1  }%
)}
\foreach \i in {1,...,\r} {
  \pgfmathtruncatemacro\n{pow(\p,\i-1)}
  \foreach \j in {1,...,\n} {
    \ifnum\i>1
      \begin{scope}[on background layer]
      \draw \pos{\i}{\j} -- \pos{\i-1}{1+div(\j-1,\p)};
      \end{scope}
    \fi
    \draw[fill=white] \pos{\i}{\j} circle (0.6mm);
  }
}
\end{tikzpicture}
    \caption{A perfect tertiary tree with $3^3$ leaves. Its automorphism group contains a group isomorphic to $P_{3,3}$.}
    \label[figure]{fig:tree}
\end{figure}

From \Cref{eq-def-eij} we get the conjugation relation 
\begin{equation}\label{eq-conjugation-eij}
    e_{i,j}\inv e_{k,l} e_{i,j} = e_{k,l'}
\end{equation}
for $i\leq k$, where $l'$ is defined by
$$ (j-1)p^{k-i} < l' \leq jp^{k-i}
\quad\text{and}\quad
l'-l \equiv p^{k-i-1} \mod p^{k-i}
$$
if $i<k$ and $(j-1)p^{k-i} < l \leq jp^{k-i}$, and $l=l'$ otherwise.

This implies that $P_{p,r}$ is, in fact, generated by $(e_{i,1})_{1\leq i\leq r}$. We can check that $P_{p,r}$ is isomorphic to the $r$-fold wreath product of $C_p$ by identifying the respective generators. So $P_{p,r}$ is a Sylow $p$-subgroup of $S_{p^r}$.

\begin{lemma}\label[lemma]{gp without property (C)}
For any prime $p$, there exist elements $x,x',y\in P_{p,4}$ such that
$$ H_p\vcentcolon=\langle x,y\rangle = \langle x',y\rangle \quad\text{and}\quad \ord([y,x']) = p \neq p^2 = \ord([y,x])
\ .
$$
In particular, the $p$-group $H_p$ does not have \property.
\end{lemma}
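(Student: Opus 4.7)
My plan is to give a uniform, explicit construction of the triple $(x,x',y)$ inside $P_{p,4}$ as words in the generators $e_{i,j}$, and then verify its properties using the conjugation relation~\eqref{eq-conjugation-eij}. Since all generators $e_{i,j}$ have order $p$, any ``high'' commutator order must be built up by how conjugation moves cycles between different levels of the perfect $p$-ary tree; the entire construction is really a combinatorial statement about this tree action.

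First, I would choose $y$ to involve a top-level factor (so that conjugation by $y$ nontrivially permutes deeper-level generators via~\eqref{eq-conjugation-eij}) together with a factor at some deeper level $i_0$. I would then pick $x$ supported at levels strictly below that of $y$'s top-level factor, chosen so that expanding $[y,x] = y\inv x\inv y x$ via~\eqref{eq-conjugation-eij} yields a product of disjoint $p$-cycles acting at two distinct levels whose contributions do not commute, so that $[y,x]^p$ is a nontrivial element supported at the deepest level and hence $\ord([y,x]) = p^2$. The element $x'$ would be obtained from $x$ by multiplication with a carefully chosen $z \in H_p$ that cancels the contribution at the higher of the two levels in the commutator with $y$, while leaving the contribution at the deeper level intact; then $[y,x']$ should only consist of cycles at that deepest level, giving $\ord([y,x'])=p$.

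Second, I would verify $\langle x,y\rangle = \langle x',y\rangle$. One inclusion is immediate from $x' \in \langle x,y\rangle$. For the reverse, I would exhibit $z$, and hence $x = x'z\inv$, as a word in $x'$ and $y$; concretely, since modifying $x$ by $z$ only cancels the top-level part of $[y,x]$, the element $z$ should itself be recoverable from $[y,x']$ together with suitable conjugates under $y$, all of which already lie in $\langle x',y\rangle$.

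Third, I would carry out the explicit computation of $\ord([y,x])$ and $\ord([y,x'])$ by repeatedly applying the conjugation relation~\eqref{eq-conjugation-eij}; these are finite bookkeeping computations in the tree-automorphism picture. The main obstacle is designing the construction itself: the requirement $\langle x,y\rangle = \langle x',y\rangle$ places a delicate constraint, because elements killed under the quotient $P_{p,4} \to P_{p,4}/\Phi(P_{p,4})$ are the only candidates for $z$ if we want $x,x'$ to have the same image modulo the Frattini subgroup, yet $z$ must also be ``visible'' from $x'$ and $y$ through commutators so that $x$ is recoverable. Balancing these two demands, so that the cancellation in $[y,x']$ happens at exactly the right level while the generating property is preserved, is the technical heart of the proof; the fact that the group has to live in $P_{p,4}$ rather than $P_{p,3}$ reflects that at least four tree levels are needed to accommodate both the order-$p^2$ commutator and the cancellation mechanism.
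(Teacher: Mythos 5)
Your proposal is a design outline rather than a proof: no explicit elements $x,x',y$ are ever exhibited, the claimed commutator orders are never computed, and the step you yourself identify as ``the technical heart'' --- arranging $\langle x,y\rangle=\langle x',y\rangle$ while changing $\ord([y,\cdot])$ --- is left unresolved. That is precisely the part where the paper's proof differs most and where your plan is at risk. The paper does not take $x'=xz$ for a correction element $z$ that must then be ``recovered'' from $x'$ and $y$; it simply sets $x:=e_{1,1}e_{2,1}$, $y:=e_{1,1}e_{2,1}e_{3,1}e_{4,1+p^2}$, and $x':=x^2$ (respectively $x':=x^3$ for $p=2$). Since $x$ has $p$-power order and the exponent is coprime to $p$, the equality $\langle x,y\rangle=\langle x^2,y\rangle$ is immediate, and all that remains is the finite bookkeeping with the relation \eqref{eq-conjugation-eij} showing $\ord([y,x])=p^2$ while $[y,x^2]$ is a product of disjoint $p$-cycles. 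Your route forgoes this shortcut entirely, so the burden of proof you have set up is genuinely heavier than necessary and is not discharged.

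There is also a conceptual muddle in your discussion of the generation constraint. You invoke the quotient $P_{p,4}\to P_{p,4}/\Phi(P_{p,4})$, but the relevant Frattini subgroup is that of $H_p=\langle x,y\rangle$, not of the ambient Sylow subgroup. Moreover, if you do choose $z\in\Phi(H_p)$, then Burnside's basis theorem (\Cref{frattini group props}) already gives $\langle xz,y\rangle=\langle x,y\rangle$ with no need for $z$ to be ``recoverable from $[y,x']$ together with suitable conjugates''; conversely, if $z\notin\Phi(H_p)$, the recoverability argument is the whole proof and you have not supplied it. Either commit to the Frattini route (which would be a legitimately different and workable mechanism, provided you can actually produce a $z\in\Phi(H_p)$ that lowers the commutator order) or adopt the paper's power trick; as written, the proposal establishes neither the existence of suitable elements nor the two order computations, so the lemma is not proved.
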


\begin{proof}

We define
$$ x := e_{1,1} e_{2,1} \ ,\quad y:= e_{1,1} e_{2,1} e_{3,1} e_{4,1+p^2} \quad \in P_{p,4}
\ .
$$

Assume $p>3$, then by \Cref{eq-conjugation-eij},
\begin{align*}
   x\inv y x
   &= e_{2,1}\inv (e_{1,1} e_{2,2} e_{3,1+p} e_{4,1+2p^2}) e_{2,1}
   = e_{1,1} e_{2,1} e_{3,1+p} e_{4,1+2p^2}
   \ ,
   \\
   x^{-2} y x^2
   &= e_{2,1}\inv (e_{1,1} e_{2,2} e_{3,1+2p} e_{4,1+3p^2}) e_{2,1} 
   = e_{1,1} e_{2,1} e_{3,1+2p} e_{4,1+3p^2}
   \ ,
   \\
   \text{so}\quad
   [y,x]
   &= y\inv x\inv y x = e_{4,1+p^2}\inv e_{3,1}\inv e_{3,1+p} e_{4,1+2p^2}
   \ ,
   \\
   [y,x^2]
   &= y\inv x^{-2} y x^2 = e_{4,1+p^2}\inv e_{3,1}\inv e_{3,1+2p} e_{4,1+3p^2}
   \ .
\end{align*}

We compute the restriction of the permutation $[y,x]$ to the set $S:=\{p^3+1,\dots, p^3+p^2\}$:
\begin{align*}
   [y,x] |_S
&= (e_{4,1+p^2}\inv e_{3,1}\inv e_{3,1+p} e_{4,1+2p^2}) |_S
= (e_{4,1+p^2}\inv e_{3,1+p} ) |_S
\\
&= (p^3+1, p^3+2, \dots, p^3+p)\inv
(p^3+1, p^3+p+1, \dots, p^3+p(p-1)+1) \\
&\quad\quad\dots (p^3+p, p^3+2p, \dots, p^3+p^2)
\\
&= (
p^3+p, p^3+2p-1, \dots, p^3+p^2-1,  \\
&\quad p^3+p-1, p^3+2p-2, \dots, p^3+p^2-2, \\
&\quad \dots, \\
&\quad p^3+1, p^3+2p, \dots, p^3+p^2)
\ ,
\end{align*}
a $p^2$-cycle. Hence $\ord([y,x])\geq p^2$. As all $p$-cycles occurring in $e_{3,1}$ or $e_{4,1+2p^2}$ are disjoint from each other and from the set $S$, indeed, $\ord([y,x])=p^2$. At the same time, our formula for $[y,x^2]$ exhibits it as a product of disjoint $p$-cycles, hence $\ord([y,x^2])=p$. So we have shown
$$
\ord([y,x^2])=p \neq p^2 = \ord([y,x])
\ ,
$$
and since $p\neq 2$, $\langle x,y\rangle = \langle x^2,y\rangle$. This proves the assertion with $x':=x^2$.

It can be verified that the same is true for $p=3$, even though the formulae for $x^{-2} y x^2$ and $[y,x^2]$ become slightly different:
\begin{align*}
 x^{-2} y x^2 
 &= e_{2,1}\inv (e_{1,1} e_{2,2} e_{3,1+2p} e_{4,1}) e_{2,1} 
   = e_{1,1} e_{2,1} e_{3,1+2p} e_{4,1+p}\ ,
 \\
 [y,x^2] 
 &= e_{4,1+p^2}\inv e_{3,1}\inv e_{3,1+2p} e_{4,1+p}
 \ .
\end{align*}
All conclusions, however, remain valid.

For $p=2$, take
\begin{align*}
x &:= e_{1,1} e_{2,1} e_{3,1} e_{4,1} = (1,9,5,13,3,11,7,15,2,10,6,14,4,12,8,16)
\ ,\\
x' &:= x^3 = (1,13,7,10,4,16,5,11,2,14,8,9,3,15,6,12)
\ ,\\
y &:=  e_{1,1} e_{3,4} e_{4,1} = (1,9,2,10)(3,11)(4,12)(5,15,7,13)(6,16,8,14)
\ ,
\end{align*}
then some computations show $\langle x,y\rangle=\langle x',y\rangle$, but $\ord([y,x])=2\neq 4=\ord([y,x'])$.
\end{proof}

\section{Results on strata of \texorpdfstring{$p$}{p}-origamis}\label{section: results on p-oris}

The goal of this section is to derive results about $p$-origamis from the results about $p$-groups in \Cref{section: results on p-groups}. In \Cref{subsection: strata of p-origamis}, we answer the question in which strata $p$-origamis occur. Subsequently, we study in \Cref{section: isomorphic deck transformation groups} under which conditions $p$-origamis with isomorphic deck groups lie in the same stratum.

\subsection{Strata of \texorpdfstring{$p$}{p}-origamis}\label{subsection: strata of p-origamis}

The answer to the question in which strata $p$-origamis occur depends on whether the considered prime is 2 or not. We begin this section with some facts that hold for all primes. In \Cref{section: strata of $2$-origamis} and \Cref{section: strata of $p$-origamis}, we consider $2$-origamis and $p$-origamis for odd primes $p$, respectively.

\begin{lemma}\label[lemma]{p-orgamis with abelian deck trafo gp}
	All normal origamis with abelian deck group lie in the stratum $\HH(0)$.
\end{lemma}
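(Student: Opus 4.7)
The plan is to apply \Cref{connection order singularities and commutator} directly. By \Cref{p-ori = 2-gen set}, any normal origami with deck group $G$ is of the form $(G, x, y)$ for some pair of generators $x, y \in G$. Since $G$ is abelian, the commutator $[x,y] = x^{-1}y^{-1}xy$ is trivial.

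By \Cref{connection order singularities and commutator}, the vanishing of $[x,y]$ is equivalent to the covering $\mathcal{O} \to \T$ being unramified. An unramified finite covering of the torus has no singular points (the cone angle at every corner of the tiling is $2\pi$), and by the Riemann--Hurwitz formula the Euler characteristic is zero, so $\mathcal{O}$ has genus $1$. Thus $\mathcal{O}$ is itself a torus and lies in the stratum $\HH(0)$, which by definition consists of translation surfaces without singularities.

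There is no real obstacle here; the lemma is essentially an immediate corollary of \Cref{connection order singularities and commutator} combined with the observation that a commutator in an abelian group is trivial.
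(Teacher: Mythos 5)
Your proposal is correct and follows essentially the same route as the paper: the commutator of any pair of generators is trivial in an abelian group, so \Cref{connection order singularities and commutator} gives an unramified cover and hence the stratum $\HH(0)$. The extra remark about Riemann--Hurwitz and genus $1$ is harmless but not needed, since $\HH(0)$ is defined simply as the set of translation surfaces without singularities.
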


\begin{proof}
	Let $\mathcal{O}=(G,x,y)$ be a normal origami with abelian deck group $G$. Since $G$ is abelian, the commutator $[x,y]$ is trivial. By \Cref{connection order singularities and commutator}, the cover induced by the origami $\OO$ is unramified, i.e., $\OO$ lies in the stratum $\HH(0)$.
\end{proof}

In particular, the previous lemma shows that all $p$-origamis with abelian deck transformation group lie in the stratum $\HH(0)$.

Since all abelian groups are isoclinic, the following can be viewed as a generalization of the previous observation:

\begin{corollary} 
    The type of singularity of a normal origami depends only on the isoclinism class of its deck group. In particular, normal origamis with isoclinic deck groups of the same order lie in the same stratum.
\end{corollary}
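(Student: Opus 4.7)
The plan is to combine the geometric setup from \Cref{section: geometric motivation} with the group-theoretic \Cref{lem-isoclinic-groups}. First I would recall that, by \Cref{p-ori = 2-gen set}, every normal origami arises from a triple $(G,x,y)$ with $\{x,y\}$ a generating pair of a two-generated group $G$, and that by \Cref{all singularities have same order} and \Cref{connection order singularities and commutator}, the stratum of $(G,x,y)$ is completely encoded by the pair $\bigl(|G|,\ord([x,y])\bigr)$: writing $a:=\ord([x,y])$, the stratum is $\HH(0)$ if $a=1$ and $\HH\bigl(\tfrac{|G|}{a}\times(a-1)\bigr)$ otherwise. In particular, the multiplicity of each singularity of such an origami is the single invariant $\ord([x,y])\in\NN$.

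Now I would invoke \Cref{lem-isoclinic-groups}, which asserts that the set
\[
  \bigl\{\,\ord([x,y])\;\bigm|\;\{x,y\}\text{ generates }G\,\bigr\}
\]
depends only on the isoclinism class of $G$. Combining this with the previous paragraph, the set of multiplicities of singularities occurring in normal origamis with deck group $G$ depends only on the isoclinism class of $G$; this is the content of the first sentence of the corollary. For the second sentence, if additionally $G_1$ and $G_2$ are isoclinic groups of the \emph{same} order, then the relation between multiplicity and number of singularities ($|\Sigma|\cdot\mult(s)=|G|$ from \Cref{all singularities have same order}) promotes the equality of multiplicity sets to an equality of the sets of realized strata. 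Thus every stratum containing a normal origami with deck group $G_1$ also contains one with deck group $G_2$, and vice versa.

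There is no real obstacle here: the whole statement is a formal consequence of the already-established dictionary between stratum data and commutator orders together with \Cref{lem-isoclinic-groups}. The only subtlety worth flagging in the write-up is that the phrase ``lie in the same stratum'' in the corollary should be understood at the level of the sets of strata realizable for each deck group (since within a fixed $G$ different generating pairs may yield different strata when $G$ lacks \property), so I would phrase the conclusion as an equality of sets of realized strata rather than as a single-origami assertion.
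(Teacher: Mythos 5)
Your proposal is correct and follows essentially the same route as the paper: combine \Cref{lem-isoclinic-groups} (the set of commutator orders is an isoclinism invariant) with the dictionary from \Cref{connection order singularities and commutator} and \Cref{all singularities have same order} identifying the stratum with the pair $(|G|,\ord([x,y]))$. Your remark that the conclusion should be read as an equality of the \emph{sets} of realized strata is a sensible clarification that is consistent with, though not spelled out in, the paper's one-line argument.
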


\begin{proof} By \Cref{lem-isoclinic-groups}, the set of possible commutator orders for pairs of generators coincides for isoclinic $2$-generated groups. This determines the degree of the singularity. The group order together with this degree determines the stratum. 
\end{proof}

\begin{rem}\label[remark]{rem-stratum-maximal-class-2gps} As an example, we have seen that for each $n\geq 1$, there is a stratum containing all $2$-origamis with the dihedral group, the generalized quaternion group, or the semidihedral group of order $2^n$ as the group of deck transformations (see \Cref{cor-dihedral-groups}). It was computed to be $\HH(4\times \left( 2^{n-2}-1\right))$ in \Cref{iso 2-gp of max class same stratum}.
\end{rem}

\subsubsection{Strata of \texorpdfstring{$2$}{2}-origamis}\label{section: strata of $2$-origamis}

In this section, we classify the strata of $2$-origamis. We will see in \Cref{section: strata of $p$-origamis} that the occurring strata differ from the ones of $p$-origamis for odd primes $p$.

\begin{theorem}\label[theorem]{strata 2-origamis}
	Let $n\in \Z_{\geq0}$. For $2$-origamis of degree $2^n$, exactly the following strata appear
	\begin{enumerate}[\textbullet]
			\item $\HH(0),$
			\item $\mathcal{H}\left({2^{n-k}}\times\left(2^k-1\right)\right),$ where $1\le k\le n-2.$
	\end{enumerate}
\end{theorem}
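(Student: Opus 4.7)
The plan is to prove the theorem in two directions: first, that every $2$-origami of degree $2^n$ lies in one of the listed strata (upper bound), and second, that each listed stratum is realized by some $2$-origami of degree $2^n$ (existence).

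For the upper bound, let $\mathcal{O}=(G,x,y)$ be a $2$-origami of degree $2^n$. By \Cref{all singularities have same order - p group}, $\mathcal{O}$ lies either in $\HH(0)$ or in $\HH(2^{n-k}\times(2^k-1))$ for some $1\le k\le n$. In the ramified case, \Cref{connection order singularities and commutator} identifies the multiplicity $2^k$ with $\ord([x,y])$. Since $[x,y]\in G'$, we have $\ord([x,y])\le \exp(G')$, which by \Cref{prop. exp(G')<n-1} is at most $2^{n-2}$ when $n\ge 3$, and equals $1$ when $n\le 2$ (in which case the only possibility is $\HH(0)$). Hence $k\le n-2$, as required.

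For the existence direction, the stratum $\HH(0)$ is realized by any $2$-generated abelian $2$-group of order $2^n$ (for instance, $C_2\times C_{2^{n-1}}$ for $n\ge 2$, $C_2$ for $n=1$, and the trivial group for $n=0$); by \Cref{p-orgamis with abelian deck trafo gp}, the resulting normal origami is unramified and hence lies in $\HH(0)$. For each $1\le k\le n-2$, \Cref{constr of 2-gps} supplies a $2$-generated $2$-group $G$ of order $2^n$ together with a pair of generators $x,y$ satisfying $\ord([x,y])=2^k$. Invoking \Cref{p-ori = 2-gen set} turns this data into a normal $2$-origami with deck group $G$, and combining \Cref{connection order singularities and commutator} with the equation $|\Sigma|\cdot\mathrm{mult}(s)=|G|=2^n$ from \Cref{all singularities have same order - p group} shows that the origami lies in $\HH(2^{n-k}\times(2^k-1))$.

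The technical heart of the argument is already handled by \Cref{prop. exp(G')<n-1} (which constrains $k$ from above) and by the explicit semidirect-product construction in \Cref{constr of 2-gps} (which realizes every admissible $k$). Once these are in place, the remainder is a direct translation between group theory and geometry via \Cref{p-ori = 2-gen set} and \Cref{connection order singularities and commutator}; the main care required is in treating the small cases $n\le 2$, where the range $1\le k\le n-2$ is empty and only $\HH(0)$ occurs, consistently on both sides of the argument.
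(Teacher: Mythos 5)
Your proposal is correct and follows essentially the same route as the paper: the upper bound via \Cref{all singularities have same order - p group}, \Cref{connection order singularities and commutator}, and the exponent bound of \Cref{prop. exp(G')<n-1}, and the realization of each stratum via the semidirect products of \Cref{constr of 2-gps}. The only cosmetic differences are that the paper disposes of $n\le 2$ by noting all such groups are abelian (rather than via $\exp(G')=1$) and realizes $\HH(0)$ using the $k=0$ case of the same construction rather than a separately chosen abelian group; neither affects the argument.
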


\begin{proof}
	For $n\leq2$, all groups of order $2^n$ are abelian, so the corresponding $2$-origamis lie in the trivial stratum by \Cref{p-orgamis with abelian deck trafo gp}.
	
	Let $n,k\in \NN$ with $n>2$ and $k\le n-2$. By \Cref{constr of 2-gps}, there exists a $2$-generated group $G$ of order $2^n$ and generators $x,y$ such that $\ord([x,y])=2^k$. Hence the origami $(G,x,y)$ lies in the stratum $\HH(0)$ for $k=0$ and in $\HH\left({2^{n-k}}\times\left(2^k-1\right)\right)$ for $k>0$.
	
	It remains to prove that other strata cannot occur. Let $\OO=(G,x,y)$ be a $2$-origami of degree $2^n$. By \Cref{all singularities have same order - p group}, the only possible strata are of the form $\HH(k\times(a-1))$ where $a$ is the multiplicity of each singularity and $k$ is the number of singularities. Using \Cref{prop. exp(G')<n-1}, we deduce that the inequality $\exp(G')\le 2^{n-2}$ holds. By \Cref{connection order singularities and commutator}, the multiplicity of each singularity equals $\ord([x,y])$. Since $\ord([x,y])\le 2^{n-2},$ the claim follows.
\end{proof}

\begin{rem}\label[remark]{constr 2-oris gps}
    We recall the definition of the series of $2$-groups in the proof of \Cref{constr of 2-gps}. For $n,k\in \NN$ with $k\le n-2$, we define the semidirect product
	$$G_{(n,k)}^2 \vcentcolon=C_{2^{k+1}} \rtimes_\varphi C_{2^{n-k-1}}= \langle r,s ~|~ r^{2^{k+1}}=s^{2^{n-k-1}}=1,~s\inv rs=r\inv  \rangle.$$
    The commutator $[r,s]$ has order $2^k$ and thus the $2$-origami $(G_{(n,k)}^2 ,r,s)$ lies in the stratum $\HH\left({2^{n-k}}\times\left(2^k-1\right)\right)$.
    
    In particular, this shows that in each of the occurring strata there exists a $2$-origami with a semidirect product of two cyclic groups as deck transformation group.
\end{rem}

\begin{example}\label[example]{example-sl2-G2-3-1}
	For $n=3$ and $k=1$, we obtain the group
	$$G^2_{(3,1)}=C_{4} \rtimes_\varphi C_{2}= \langle r,s ~|~ r^{4}=s^{2}=1,~s\inv rs=r\inv  \rangle.$$
	This group is isomorphic to the dihedral group $D_8$. We consider the $2$-origami $\OO=(G^2_{(3,1)},r,s)$. The commutator $[r,s]$ has order $2$ and thus $\OO$ lies in $\HH(4\times 1)$.
	\begin{center} 
    	\captionsetup{type=figure}
	    \begin{tikzpicture}[remember picture,scale=0.9]


\fill[green,opacity=0.4](-1,0)--(3,0)--(3,1)--(-1,1);
\fill[darkblue,opacity=0.3](-1,1)--(0,1)--(0,2)--(-1,2);
\fill[darkblue,opacity=0.3](1,1)--(2,1)--(2,2)--(1,2);
\fill[darkblue,opacity=0.3](0,-1)--(1,-1)--(1,0)--(0,0);
\fill[darkblue,opacity=0.3](2,-1)--(3,-1)--(3,0)--(2,0);

\node[scale=0.9] 
(1) at (-0.5,0.5) {$1$};
\node[scale=0.9] 
($i$) at (0.5,0.5) {$r$};
\node[scale=0.9] 
($-1$) at (1.5,0.5) {$r^2$};
\node[scale=0.9] 
($-i$) at (2.5,0.5) {$r^3$};

\node[scale=0.9] 
(j) at (-0.5,1.5) {$s$};
\node[scale=0.9] 
(-k) at (0.5,-0.5) {$rs$};
\node[scale=0.9] 
($-j$) at (1.5,1.5) {$r^2s$};
\node[scale=0.9] 
($-k$) at (2.5,-0.5) {$r^3s$};


\draw (0,0) -- (1,0) -- (1,1) -- (0,1) -- (0,0);
\draw (1,0) -- (2,0) -- (2,1) -- (1,1);
\draw (2,0) -- (3,0) -- (3,1) -- (2,1);
\draw (0,0) -- (-1,0) -- (-1,1) -- (0,1);

\draw (0,0) -- (0,-1) -- (1,-1) -- (1,0);
\draw (2,0) -- (2,-1) -- (3,-1) -- (3,0);

\draw (1,1) -- (1,2) -- (2,2) -- (2,1);
\draw (0,1) -- (0,2) -- (-1,2) -- (-1,1);

\fill[darkblue] (0,0) circle (0.1);
\fill[darkblue] (2,0) circle (0.1);
\fill[darkblue] (2,2) circle (0.1);
\fill[darkblue] (0,2) circle (0.1);

\fill[orangedot] (1,0) circle (0.1);
\fill[orangedot] (3,0) circle (0.1);
\fill[orangedot] (1,2) circle (0.1);
\fill[orangedot] (-1,2) circle (0.1);
\fill[orangedot] (-1,0) circle (0.1);

\fill[gray] (0,1) circle (0.1);
\fill[gray] (2,1) circle (0.1);
\fill[gray] (0,-1) circle (0.1);
\fill[gray] (2,-1) circle (0.1);

\fill[red] (1,-1) circle (0.1);
\fill[red] (3,-1) circle (0.1);
\fill[red] (1,1) circle (0.1);
\fill[red] (3,1) circle (0.1);
\fill[red] (-1,1) circle (0.1);


\draw (-1.1,0.5) -- (-0.9,0.5);
\draw (2.9,0.5) -- (3.1,0.5);

\draw (1.9,-0.45) -- (2.1,-0.45);
\draw (1.9,-0.55) -- (2.1,-0.55);

\draw (-.1,1.45) -- (.1,1.45);
\draw (-.1,1.55) -- (.1,1.55);

\draw (-0.1,-0.5) -- (.1,-0.5);
\draw (-0.1,-0.4) -- (.1,-0.4);
\draw (-0.1,-0.6) -- (.1,-0.6);
\draw (1.9,1.5) -- (2.1,1.5);
\draw (1.9,1.4) -- (2.1,1.4);
\draw (1.9,1.6) -- (2.1,1.6);

\draw (.9,1.6) -- (1.1,1.5) -- (.9,1.4);

\draw (2.9,-.6) -- (3.1,-.5) -- (2.9,-.4);

\draw (-.9,1.6) -- (-1.1,1.5) -- (-.9,1.4);

\draw (1.1,-.6) -- (.9,-.5) -- (1.1,-.4);


\draw (-0.55,1.9) -- (-0.45,2.1);
\draw (-.45,.1) -- (-.55,-.1);

\draw (0.4,-1.1) -- (.5,-.9);
\draw (0.5,-1.1) -- (.6,-.9);

\draw (.4,.9) -- (.5,1.1);
\draw (.5,.9) -- (.6,1.1);

\draw (1.45,2.1) -- (1.55,1.9);
\draw (1.45,.1) -- (1.55,-.1);

\draw (2.4,1.1) -- (2.5,.9);
\draw (2.5,1.1) -- (2.6,.9);

\draw (2.4,-.9) -- (2.5,-1.1);
\draw (2.5,-.9) -- (2.6,-1.1);
\end{tikzpicture}
	    \caption{The $2$-origami $\OO=(G^2_{(3,1)},r,s)$ with marked horizontal cylinders.}
	\end{center}
	Choosing $(s,rs)$ as the pair of generators of $G^2_{(3,1)}$, we obtain the origami $\OO'=(G^2_{(3,1)},s,rs)$. Since $[s,rs]=r^{-2}$ has order 2, the origamis $\OO$ and $\OO'$ lie in the same stratum. In the following, we show that the origamis are different. We call a maximal collection of parallel closed geodesics on an origami a \textbf{cylinder}. For a normal origami $(H,x,y)$ the length of each cylinder in horizontal and vertical direction equals the order of $x$ and $y$, respectively. We conclude that the horizontal cylinders of $\OO'$ have length 2, whereas the horizontal cylinders of $\OO$ have length 4. Hence the origamis are different. (Alternatively, one can use  \Cref{origamis are equal} and the fact that group isomorphisms preserve orders.)
	\begin{figure}[ht]
	    \centering
	    \begin{tikzpicture}[remember picture,scale=0.765]

\fill[green,opacity=0.4](0,4)--(2,4)--(2,5)--(0,5);
\fill[darkblue,opacity=0.4](1,3)--(3,3)--(3,4)--(1,4);
\fill[orange,opacity=0.4](2,2)--(4,2)--(4,3)--(2,3);
\fill[gray,opacity=0.4](3,1)--(5,1)--(5,2)--(3,2);

\node[scale=1.02] 
(1) at (3.5,1.5) {$1$};
\node[scale=1.02] 
($i$) at (4.5,1.5) {$s$};
\node[scale=1.02] 
($-1$) at (3.5,2.5) {$rs$};
\node[scale=1.02] 
($-i$) at (2.5,2.5) {$r$};

\node[scale=1.02] 
(j) at (2.5,3.5) {$r^2s$};
\node[scale=1.02] 
(-k) at (1.5,3.5) {$r^2$};
\node[scale=1.02] 
($-j$) at (1.5,4.5) {$r^3s$};
\node[scale=1.02] 
($-k$) at (.5,4.5) {$r^3$};


\draw (0,5) -- (1,5) -- (1,4) -- (0,4) -- (0,5);
\draw (1,5) -- (2,5) -- (2,4) -- (1,4);
\draw (1,4) -- (1,3) -- (2,3) -- (2,4);
\draw (2,4) -- (3,4) -- (3,3) -- (2,3);

\draw (3,3) -- (3,2) -- (2,2) -- (2,3);
\draw (3,3) -- (4,3) -- (4,2) -- (3,2);

\draw (3,2) -- (3,1) -- (4,1) -- (4,2);
\draw (4,2) -- (5,2) -- (5,1) -- (4,1);

\fill[darkblue] (3,1) circle (0.1);
\fill[darkblue] (1,5) circle (0.1);
\fill[darkblue] (1,3) circle (0.1);
\fill[darkblue] (3,3) circle (0.1);
\fill[darkblue] (5,1) circle (0.1);

\fill[orangedot] (0,5) circle (0.1);
\fill[orangedot] (2,5) circle (0.1);
\fill[orangedot] (2,3) circle (0.1);
\fill[orangedot] (4,3) circle (0.1);
\fill[orangedot] (4,1) circle (0.1);

\fill[gray] (1,4) circle (0.1);
\fill[gray] (3,4) circle (0.1);
\fill[gray] (3,2) circle (0.1);
\fill[gray] (5,2) circle (0.1);

\fill[red] (0,4) circle (0.1);
\fill[red] (2,4) circle (0.1);
\fill[red] (2,2) circle (0.1);
\fill[red] (4,2) circle (0.1);


\draw (.5,4.9) -- (.5,5.1);
\draw (4.5,.9) -- (4.5,1.1);

\draw (4.45,1.9) -- (4.45,2.1);
\draw (4.55,1.9) -- (4.55,2.1);

\draw (.55,3.9) -- (.55,4.1);
\draw (.45,3.9) -- (.45,4.1);















\end{tikzpicture}
	    \caption{The $2$-origami $\OO'=(G^2_{(3,1)},s,rs)$ with marked horizontal cylinders. Opposite sides are identified unless marked otherwise.}
	\end{figure}
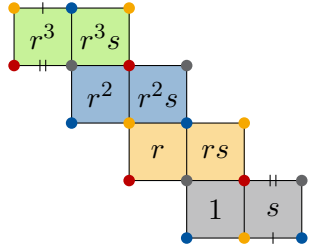
	
	The example above gives two different $2$-origamis with isomorphic deck group. In \Cref{section: isomorphic deck transformation groups}, we address the question whether the deck transformation group determines the stratum of a normal origami.
\end{example}

\begin{rem}	
    We recall that the matrix group $\SL(2,\Z)$ is generated by $S=\left(\begin{smallmatrix}0&-1\\1&0\end{smallmatrix}\right)$ and $T=\left(\begin{smallmatrix}1&1\\0&1\end{smallmatrix}\right)$. There is a natural action of $\SL(2,\Z)$ on origamis, where geometrically the action of $S$ corresponds to a rotation by $90^\circ$, while the action of $T$ corresponds to shearing the squares in the tiling of an origami (in the Euclidean plane) by $T$. One is interested in studying $\SL(2,\Z)$-orbits of origamis, because they are connected to the study of Teichm\"uller curves. 
    
    \begin{center}
        \captionsetup{type=figure}
        \begin{tikzpicture}[remember picture,line width=1pt, scale=1]

\newcommand{\vertex}{\node[circle, draw, inner sep=0pt, minimum size=6pt]}


\draw (0,0) -- (1,0) -- (1,1) -- (0,1) -- (0,0);
\draw (0,0) -- (-1,0) -- (-1,1) -- (0,1);

\draw (0,1) -- (0,2) -- (-1,2) -- (-1,1);

\draw (1.5,.5) -- (2.5,0.5);
\draw (2.3,.6) -- (2.5,0.5) -- (2.3,0.4);

\node[scale=1] ($A$) at (2,0.8) {$T^2$};
\node[scale=1] ($=$) at (8.5,0.5) {$=$};






\draw (3,0) -- (4,0) -- (6,1) -- (5,1) -- (3,0);
\draw (5,1) -- (7,2) -- (8,2) -- (6,1);

\draw (4,0) -- (5,0) -- (7,1) -- (6,1);

\draw[dashed, ultra thick, orangeline] (4,0) -- (4,.5);
\draw[dashed, ultra thick, orangeline] (6,1) -- (6,.5);
\draw[dashed, ultra thick, redline] (6,1) -- (6,1.5);
\draw[dashed, ultra thick, redline] (7,2) -- (7,1.5);
\draw[dashed, ultra thick, greenline] (5,1) -- (5,0);


\draw (10,0) -- (11,0);
\draw (11,1) -- (10,1);
\draw (9,1) -- (10,1);
\draw (10,0) -- (9,0);
\draw (10,2) -- (9,2);

\path[draw, ultra thick, greenline] (9,0) -- (9,1);
\path[draw, ultra thick, greenline] (11,0) -- (11,1);
\path[draw, ultra thick, orangeline] (10,0) -- (10,1);
\path[draw, ultra thick, redline] (10,2) -- (10,1);
\path[draw, ultra thick, redline] (9,2) -- (9,1);





\end{tikzpicture}
        \caption[Shearing]{The ``shearing'' action of $T^2=\left(\begin{smallmatrix}1&2\\0&1\end{smallmatrix}\right)\in\SL(2,\Z)$ on an origami. The opposite sides are identified.} 
        \label[figure]{fig:shearing}
    \end{center}        
    
    Alternatively, the action can be described with the help of the monodromy map. See \cite{WS-deficiency} for further information. For a normal origami $(H,x,y)$, the actions of $S$ and $T$ are given by
    $$    S.(H,x,y) = (H,y^{-1},x)
    ,\qquad
    T.(H,x,y) = (H,x,yx^{-1}).
    $$
    In this description, it can be verified easily, that the commutator of the generators $[x,y]$ is sent to a conjugated element in $H$ by the actions of $S$ and $T$, respectively. So in particular, to an element of the same order. Hence it is clear that the stratum is preserved. It follows that the set of normal origamis in a given stratum decomposes into orbits under the $\SL(2,\Z)$-action. This is also known from the geometric description.
    
    We note that \Cref{example-sl2-G2-3-1} shows an instance of such an $\SL(2,\Z)$-action because of the equation
    $$T.\OO'=\left(G,s,rss^{-1}\right)=(G,s,r)=\OO.$$
    In particular, $\OO$ and $\OO'$ lie in the same $\SL(2,\Z)$-orbit.
\end{rem}

\subsubsection{Strata of \texorpdfstring{$p$}{p}-origamis for odd primes \texorpdfstring{$p$}{p}}\label{section: strata of $p$-origamis}

Throughout this section, let $p$ denote an odd prime. We study the question in which strata $p$-origamis lie. Compared to the case of $2$-origamis fewer strata occur. This is shown in \Cref{strata p-origamis for p>2}. 

\begin{theorem}\label[theorem]{strata p-origamis for p>2}
	Let $n\in \Z_{\geq0}$. For $p$-origamis of degree $p^n$ exactly the following strata appear
	\begin{enumerate}
		\item[$\bullet$] $\HH(0)$
		\item[$\bullet$] $\mathcal{H}\left( {p^{n-k}} \times\left( p^k -1\right)\right) ,$ where $1\le k< \frac{n}{2}$.
	\end{enumerate}
\end{theorem}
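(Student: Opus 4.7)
The plan is to mirror the structure of the proof of \Cref{strata 2-origamis}, invoking the odd-prime analogues of the commutator-order bound and of the explicit group construction. Both directions of the claim reduce to the group-theoretic results already proved in \Cref{section: results on p-groups} via the dictionary between singularity multiplicities and commutator orders recorded in \Cref{connection order singularities and commutator} and \Cref{all singularities have same order - p group}.

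First I would handle the ``only these strata occur'' direction. Let $\OO = (G,x,y)$ be a $p$-origami of degree $p^n$ which is not in $\HH(0)$. By \Cref{all singularities have same order - p group}, $\OO$ lies in a stratum of the form $\HH(p^{n-k}\times(p^k-1))$ for some $1\le k\le n$, where $p^k$ is the common multiplicity of its singularities. By \Cref{connection order singularities and commutator}, $p^k = \ord([x,y])$. Now \Cref{cor bound for ord xy} gives $\ord([x,y]) < p^{n/2}$, which forces $k < n/2$. If instead $\OO$ lies in $\HH(0)$, it is accounted for by the first bullet.

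Next I would realize each possible stratum. For the trivial stratum $\HH(0)$, the cyclic $p$-group $C_{p^n}$ with any generating pair produces (by \Cref{p-ori = 2-gen set} and \Cref{p-orgamis with abelian deck trafo gp}) a $p$-origami of degree $p^n$ in $\HH(0)$. For $1\le k < n/2$, \Cref{group theory: strata p-origamis for p>2} provides a $2$-generated $p$-group $G_{(n,k)}^p$ of order $p^n$ together with generators $x,y$ such that $\ord([x,y])=p^k$; then \Cref{p-ori = 2-gen set} yields the normal origami $(G_{(n,k)}^p,x,y)$ of degree $p^n$, and \Cref{connection order singularities and commutator} together with \Cref{all singularities have same order - p group} places it in $\HH(p^{n-k}\times(p^k-1))$.

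There is no genuine obstacle: the hard analytic content is already packaged into \Cref{thm bound for exp(G')} (and its corollary \Cref{cor bound for ord xy}) on the sharp bound $\exp(G')^2 < |G|$ for odd primes, and into the explicit semidirect-product construction of \Cref{group theory: strata p-origamis for p>2}. The only thing to double-check is that the inequality $k < n/2$ is strict in both directions (upper bound and realization), which is consistent with the statements of these two cited results. Combining the two steps completes the proof.
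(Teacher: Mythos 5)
Your proposal is correct and follows essentially the same route as the paper's proof: the upper bound on the singularity multiplicity comes from \Cref{cor bound for ord xy} (via \Cref{connection order singularities and commutator} and \Cref{all singularities have same order - p group}), and realization comes from the semidirect-product groups of \Cref{group theory: strata p-origamis for p>2}. The only cosmetic difference is that you realize $\HH(0)$ with a cyclic group while the paper uses its constructed groups with $k=0$ (and \Cref{p-orgamis with abelian deck trafo gp} for $n\le 2$); both are fine.
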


\begin{proof}
    For $n\leq2$, all groups of order $p^n$ are abelian, so the corresponding $p$-origamis lie in the trivial stratum by \Cref{p-orgamis with abelian deck trafo gp}.
	
    Let $n,k\in \NN$ with $n>2$ and $k<\frac{n}{2}$. By \Cref{group theory: strata p-origamis for p>2}, there exists a $2$-generated group $G$ of order $p^n$ and generators $x,y$ such that $\ord([x,y])=p^k$. Hence the origami $(G,x,y)$ lies in the stratum $\HH(0)$ for $k=0$ and in $\HH\left({p^{n-k}}\times\left(p^k-1\right)\right)$ for $k>0$.
	It remains to prove that other strata cannot occur. Let $\OO=(G,x,y)$ be a $p$-origami of degree $p^n$. By \Cref{all singularities have same order - p group}, the only possible strata are of the form $\HH(k\times(a-1))$ where $a$ is the multiplicity of each singularity and $k$ is the number of singularities. By \Cref{connection order singularities and commutator}, the multiplicity of each singularity equals $\ord([x,y])$. Using \Cref{cor bound for ord xy}, we deduce that the inequality $\ord([x,y])\le p^{k}$ holds, where $k< \frac{n}{2}$.
\end{proof}

\begin{rem}\label[remark]{constr p-oris gps}
    We recall the definition of the series of $p$-groups in the proof of \Cref{group theory: strata p-origamis for p>2}. For $n,k\in \NN$ with $k<\frac{n}{2}$, we define the semidirect product
	$$G_{(n,k)}^p \vcentcolon=C_{p^{k+1}} \rtimes_\varphi C_{p^{n-k-1}}= \langle r,s ~|~ r^{p^{k+1}}=s^{p^{n-k-1}}=1,~s\inv rs=r^{p+1} \rangle.$$
    The commutator $[r,s]$ has order $p^k$ and thus the $p$-origami $(G_{(n,k)}^p ,r,s)$ lies in the stratum $\HH\left({p^{n-k}}\times\left(p^k-1\right)\right)$. As in \Cref{constr 2-oris gps}, this shows that in each of the occurring strata there exists a $p$-origami with a semidirect product of two cyclic groups as deck transformation group.
\end{rem}

\begin{example}
	For $p=3,~n=3$ and $k=1$, we obtain the group
	$$G^3_{(3,1)}=C_{9} \rtimes_\varphi C_{3}= \langle r,s ~|~ r^{9}=s^{3}=1,~s\inv rs=r^{4} \rangle.$$
	We consider the origami $\OO=(G^3_{(3,1)},r,s)$. The commutator $[r,s]$ has order $3$ and thus $\OO$ lies in $\HH(9\times2)$. Hence the origami has nine singularities of angle $3\cdot 2\pi$. 
    \begin{center}
        \captionsetup{type=figure}
	    \begin{tikzpicture}[remember picture, scale=0.9]

\node[scale=0.9] 
(1) at (-0.5,0.5) {$1$};
\node[scale=0.9] 
($r$) at (0.5,0.5) {$r$};
\node[scale=0.9] 
($r^2$) at (1.5,0.5) {$r^2$};
\node[scale=0.9] 
($r^3$) at (2.5,0.5) {$r^3$};
\node[scale=0.9] 
($r^4$) at (3.5,0.5) {$r^4$};
\node[scale=0.9] 
($r^5$) at (4.5,0.5) {$r^5$};
\node[scale=0.9] 
($r^6$) at (5.5,0.5) {$r^6$};
\node[scale=0.9] 
($r^7$) at (6.5,0.5) {$r^7$};
\node[scale=0.9] 
($r^8$) at (7.5,0.5) {$r^8$};

\node[scale=0.9] 
(s) at (-0.5,1.5) {$s$};
\node[scale=0.9] 
($rs^2$) at (0.5,-0.5) {$rs^2$};
\node[scale=0.9] 
($r^2s$) at (2.5,-0.5) {$r^3s^2$};
\node[scale=0.9] 
($r^3s^2$) at (1.5,1.5) {$r^2s$};
\node[scale=0.9] 
($r^5s^2$) at (4.5,-0.5) {$r^5s^2$};
\node[scale=0.9] 
($r^4s$) at (3.5,1.5) {$r^4s$};
\node[scale=0.9] 
($r^6s$) at (5.5,1.5) {$r^6s$};
\node[scale=0.9] 
($r^7s^2$) at (6.5,-0.5) {$r^7s^2$};
\node[scale=0.9] 
($r^8s^2$) at (7.5,2.5) {$r^8s^2$};

\node[scale=0.9] 
($s^2$) at (-0.5,2.5) {$s^2$};
\node[scale=0.9] 
($rs$) at (0.5,-1.5) {$rs$};
\node[scale=0.9] 
($r^2s^2$) at (2.5,-1.5) {$r^3s$};
\node[scale=0.9] 
($r^3s$) at (1.5,2.5) {$r^2s^2$};
\node[scale=0.9] 
($r^5s$) at (4.5,-1.5) {$r^5s$};
\node[scale=0.9] 
($r^4s^2$) at (3.5,2.5) {$r^4s^2$};
\node[scale=0.9] 
($r^6s^2$) at (5.5,2.5) {$r^6s^2$};
\node[scale=0.9] 
($r^7s$) at (6.5,-1.5) {$r^7s$};
\node[scale=0.9] 
($r^8s$) at (7.5,1.5) {$r^8s$};


\draw (0,0) -- (1,0) -- (1,1) -- (0,1) -- (0,0);
\draw (1,0) -- (2,0) -- (2,1) -- (1,1);
\draw (2,0) -- (3,0) -- (3,1) -- (2,1);
\draw (0,0) -- (-1,0) -- (-1,1) -- (0,1);
\draw (3,0) -- (4,0) -- (4,1) -- (3,1);
\draw (4,0) -- (5,0) -- (5,1) -- (4,1);
\draw (5,0) -- (6,0) -- (6,1) -- (5,1);
\draw (6,0) -- (7,0) -- (7,1) -- (6,1);
\draw (7,0) -- (8,0) -- (8,1) -- (7,1);

\draw (0,0) -- (0,-1) -- (1,-1) -- (1,0);
\draw (2,0) -- (2,-1) -- (3,-1) -- (3,0);
\draw (4,0) -- (4,-1) -- (5,-1) -- (5,0);
\draw (6,0) -- (6,-1) -- (7,-1) -- (7,0);

\draw (0,-1) -- (0,-2) -- (1,-2) -- (1,-1);
\draw (2,-1) -- (2,-2) -- (3,-2) -- (3,-1);
\draw (4,-1) -- (4,-2) -- (5,-2) -- (5,-1);
\draw (6,-1) -- (6,-2) -- (7,-2) -- (7,-1);

\draw (1,1) -- (1,2) -- (2,2) -- (2,1);
\draw (0,1) -- (0,2) -- (-1,2) -- (-1,1);
\draw (4,1) -- (4,2) -- (3,2) -- (3,1);
\draw (6,1) -- (6,2) -- (5,2) -- (5,1);
\draw (8,1) -- (8,2) -- (7,2) -- (7,1);

\draw (1,2) -- (1,3) -- (2,3) -- (2,2);
\draw (0,2) -- (0,3) -- (-1,3) -- (-1,2);
\draw (4,2) -- (4,3) -- (3,3) -- (3,2);
\draw (6,2) -- (6,3) -- (5,3) -- (5,2);
\draw (8,2) -- (8,3) -- (7,3) -- (7,2);

\fill[darkblue] (-1,1) circle (0.1);
\fill[darkblue] (2,1) circle (0.1);
\fill[darkblue] (2,-2) circle (0.1);
\fill[darkblue] (5,1) circle (0.1);
\fill[darkblue] (5,-2) circle (0.1);
\fill[darkblue] (8,1) circle (0.1);

\fill[orangedot] (0,1) circle (0.1);
\fill[orangedot] (3,1) circle (0.1);
\fill[orangedot] (6,1) circle (0.1);
\fill[orangedot] (0,-2) circle (0.1);
\fill[orangedot] (3,-2) circle (0.1);
\fill[orangedot] (6,-2) circle (0.1);

\fill[green] (1,1) circle (0.1);
\fill[green] (4,1) circle (0.1);
\fill[green] (7,1) circle (0.1);
\fill[green] (1,-2) circle (0.1);
\fill[green] (4,-2) circle (0.1);
\fill[green] (7,-2) circle (0.1);

\fill[gray2] (0,0) circle (0.1);
\fill[gray2] (0,3) circle (0.1);
\fill[gray2] (3,0) circle (0.1);
\fill[gray2] (3,3) circle (0.1);
\fill[gray2] (6,0) circle (0.1);
\fill[gray2] (6,3) circle (0.1);

\fill[lightblue2] (1,0) circle (0.1);
\fill[lightblue2] (1,3) circle (0.1);
\fill[lightblue2] (4,0) circle (0.1);
\fill[lightblue2] (4,3) circle (0.1);
\fill[lightblue2] (7,0) circle (0.1);
\fill[lightblue2] (7,3) circle (0.1);

\fill[black] (0,2) circle (0.1);
\fill[black] (3,2) circle (0.1);
\fill[black] (0,-1) circle (0.1);
\fill[black] (3,-1) circle (0.1);
\fill[black] (6,2) circle (0.1);
\fill[black] (6,-1) circle (0.1);

\fill[lila] (-1,2) circle (0.1);
\fill[lila] (2,2) circle (0.1);
\fill[lila] (8,2) circle (0.1);
\fill[lila] (2,-1) circle (0.1);
\fill[lila] (5,2) circle (0.1);
\fill[lila] (5,-1) circle (0.1);

\fill[petrol] (1,2) circle (0.1);
\fill[petrol] (4,2) circle (0.1);
\fill[petrol] (1,-1) circle (0.1);
\fill[petrol] (4,-1) circle (0.1);
\fill[petrol] (7,2) circle (0.1);
\fill[petrol] (7,-1) circle (0.1);

\fill[red] (-1,0) circle (0.1);
\fill[red] (-1,3) circle (0.1);
\fill[red] (2,0) circle (0.1);
\fill[red] (2,3) circle (0.1);
\fill[red] (5,0) circle (0.1);
\fill[red] (5,3) circle (0.1);
\fill[red] (8,0) circle (0.1);
\fill[red] (8,3) circle (0.1);


\draw (-1.1,0.5) -- (-0.9,0.5);
\draw (7.9,0.5) -- (8.1,0.5);

\draw (.1,1.45) -- (-0.1,1.45);
\draw (.1,1.55) -- (-0.1,1.55);

\draw (6.1,-1.45) -- (5.9,-1.45);
\draw (6.1,-1.55) -- (5.9,-1.55);

\draw (7.1,-1.45) -- (6.9,-1.45);
\draw (7.1,-1.5) -- (6.9,-1.5);
\draw (7.1,-1.55) -- (6.9,-1.55);

\draw (4.1,-1.45) -- (3.9,-1.45);
\draw (4.1,-1.5) -- (3.9,-1.5);
\draw (4.1,-1.55) -- (3.9,-1.55);

\draw (5,-1.5) circle (0.05);

\draw (2,-1.5) circle (0.05);

\draw (2.95,-1.4) -- (2.95,-1.5) -- (3.05,-1.5) -- (3.05,-1.4) -- (2.95,-1.4);
\draw (-.05,-1.4) -- (-.05,-1.5) -- (.05,-1.5) -- (.05,-1.4) -- (-0.05,-1.4);

\draw (1.1,-1.4) -- (.9,-1.5);
\draw (7.1,1.5) -- (6.9,1.4);

\draw (8.1,1.55) -- (7.9,1.45);
\draw (8.1,1.65) -- (7.9,1.55);

\draw (5.1,1.55) -- (4.9,1.45);
\draw (5.1,1.65) -- (4.9,1.55);

\draw (6.1,1.55) -- (5.9,1.45);
\draw (6.1,1.6) -- (5.9,1.5);
\draw (6.1,1.65) -- (5.9,1.55);

\draw (3.1,1.55) -- (2.9,1.45);
\draw (3.1,1.6) -- (2.9,1.5);
\draw (3.1,1.65) -- (2.9,1.55);

\draw (3.9,1.5) -- (4.1,1.4);
\draw (4.1,1.6) -- (3.9,1.5);
\draw (4.1,1.65) -- (3.9,1.55);

\draw (.9,1.5) -- (1.1,1.4);
\draw (1.1,1.6) -- (.9,1.5);
\draw (1.1,1.65) -- (.9,1.55);

\draw (1.9,1.5) -- (2.1,1.4);
\draw (2.1,1.6) -- (1.9,1.5);

\draw (-1.1,1.5) -- (-.9,1.4);
\draw (-.9,1.6) -- (-1.1,1.5);

\draw (.1,2.45) -- (-0.1,2.55);

\draw (3.1,2.45) -- (2.9,2.55);

\draw (4.1,2.4) -- (3.9,2.5);
\draw (4.1,2.5) -- (3.9,2.6);

\draw (7.1,2.4) -- (6.9,2.5);
\draw (7.1,2.5) -- (6.9,2.6);

\draw (8.1,2.4) -- (7.9,2.5);
\draw (8.1,2.45) -- (7.9,2.55);
\draw (8.1,2.5) -- (7.9,2.6);

\draw (2.1,-.6) -- (1.9,-.5);
\draw (2.1,-.55) -- (1.9,-.45);
\draw (2.1,-.5) -- (1.9,-.4);

\draw (3.1,-.6) -- (2.9,-.5);
\draw (3.1,-.5) -- (2.9,-.5);
\draw (3.1,-.5) -- (2.9,-.4);

\draw (6.1,-.6) -- (5.9,-.5);
\draw (6.1,-.5) -- (5.9,-.5);
\draw (6.1,-.5) -- (5.9,-.4);

\draw (7.1,-.55) -- (6.9,-.55);
\draw (7.1,-.55) -- (6.9,-.45);
\draw (7.1,-.45) -- (6.9,-.45);

\draw (1.1,2.45) -- (.9,2.45);
\draw (1.1,2.45) -- (.9,2.55);
\draw (1.1,2.55) -- (.9,2.55);

\draw (2.1,2.45) -- (1.9,2.45);
\draw (2.1,2.55) -- (1.9,2.45);
\draw (2.1,2.55) -- (1.9,2.55);

\draw (5.1,2.45) -- (4.9,2.45);
\draw (5.1,2.55) -- (4.9,2.45);
\draw (5.1,2.55) -- (4.9,2.55);

\draw (6.1,2.45) -- (5.9,2.45);
\draw (6.1,2.55) -- (5.9,2.45);
\draw (6.1,2.45) -- (5.9,2.55);
\draw (6.1,2.55) -- (5.9,2.55);

\draw (.1,-.55) -- (-.1,-.55);
\draw (.1,-.45) -- (-.1,-.55);
\draw (.1,-.55) -- (-.1,-.45);
\draw (.1,-.45) -- (-.1,-.45);

\fill[black] (1,-.5) circle (0.05);
\fill[black] (-1,2.5) circle (0.05);








\draw (-0.45,.1) -- (-0.55,-.1);
\draw (-.45,3.1) -- (-.55,2.9);

\draw (1.5,.1) -- (1.4,-.1);
\draw (1.6,.1) -- (1.5,-.1);
\draw (1.5,3.1) -- (1.4,2.9);
\draw (1.6,3.1) -- (1.5,2.9);

\draw (3.5,.1) -- (3.4,-.1);
\draw (3.6,.1) -- (3.5,-.1);
\draw (3.55,.1) -- (3.45,-.1);
\draw (3.5,3.1) -- (3.4,2.9);
\draw (3.6,3.1) -- (3.5,2.9);
\draw (3.55,3.1) -- (3.45,2.9);

\draw (5.6,-.1) -- (5.5,.1) -- (5.4,-.1);
\draw (5.6,2.9) -- (5.5,3.1) -- (5.4,2.9);

\draw (7.6,.1) -- (7.5,-.1) -- (7.4,.1);
\draw (7.6,3.1) -- (7.5,2.9) -- (7.4,3.1);

\draw (0.45,1.1) -- (0.55,.9);
\draw (.45,-1.9) -- (.55,-2.1);

\draw (2.4,-1.9) -- (2.5,-2.1);
\draw (2.5,-1.9) -- (2.6,-2.1);

\draw (2.4,1.1) -- (2.5,.9);
\draw (2.5,1.1) -- (2.6,.9);

\draw (4.4,1.1) -- (4.5,.9);
\draw (4.45,1.1) -- (4.55,.9);
\draw (4.5,1.1) -- (4.6,.9);

\draw (4.4,-1.9) -- (4.5,-2.1);
\draw (4.45,-1.9) -- (4.55,-2.1);
\draw (4.5,-1.9) -- (4.6,-2.1);

\draw (6.45,-1.9) -- (6.55,-2.1);
\draw (6.55,-1.9) -- (6.45,-2.1);
\draw (6.45,1.1) -- (6.55,.9);
\draw (6.55,1.1) -- (6.45,.9);
\end{tikzpicture}
	    \caption{The $3$-origami $(G^3_{(3,1)},r,s)$. All horizontal cylinders are of length $9$ and all vertical cylinders are of length $3$.}
	\end{center}
\end{example}

\subsection{\texorpdfstring{$p$}{p}-origamis with isomorphic deck groups}\label{section: isomorphic deck transformation groups}

In this section, we study the question whether the deck transformation group determines the stratum of a normal origami. This question was motivated by computer experiments. For $p$-origamis, computer experiments suggested that the stratum depends only on the isomorphism class of the deck transformation group. Using \Cref{ex. iso deck groups not same stratum}, we show that this does not hold for all finite groups.

\begin{example}
    For $n\in \mathbb{N}_{\ge5}$, we consider the alternating group $A_n$ with the following pairs of generators $((1,2,\dots,n-1,n),(1,2,3))$ and $((3,4,\dots, n-1,n),(1,3)(2,4))$. Recall from \Cref{ex. iso deck groups not same stratum} that the order of the commutators $[(1,2,\dots,n-1,n),(1,2,3)]$ and $[((3,4,\dots, n-1,n),(1,3)(2,4))]$ are $3$ and $5$, respectively. Hence the normal origami $\OO_n\vcentcolon=(A_n, (1,2,\dots,n-1,n),(1,2,3))$ has $\frac{n!}{6}$ singularities of multiplicity $3$, whereas the normal origami $\OO_n'\vcentcolon= (A_n,(3,4,\dots, n-1,n),(1,3)(2,4))$ has $\frac{n!}{10}$ singularities of multiplicity $5$. It follows that there are two pairs of generators of $A_n$ defining normal origamis lying in different strata.
    
    Recall that we multiply permutations from the left because we label the squares of a normal origami by multiplying generators of the deck group from the right.
    
    The origami constructed in {\cite[Example 7.3]{Athreya2018}} is a normal origami with deck group $A_5$. It lies in the same stratum as origami $\OO_5'$ and thus could replace $\OO_5'$ in the example above for $n=5$. 
\end{example}

Recall that two normal origamis $(G,x_1,y_1)$ and $(G,x_2,y_2)$ with isomorphic group of deck transformation group lie in the same stratum if and only if the orders of the commutators $[x_1,y_1]$ and $[x_2,y_2]$ agree. This is is the case for all possible pairs of generators of a group $G$ if and only if the deck transformation group has \property (see \Cref{def property C}). Using \Cref{thm-diagram} and \Cref{rem-small-groups}, we obtain \Cref{thm-B} from the introduction.

\begin{theorem}\label[theorem]{thm-deck-group-determines-stratum}
    Let $G$ be a finite $2$-generated $p$-group. If $G$ satisfies one of the properties (1) to (8), then all $p$-origamis with deck group $G$ lie in the same stratum.
    \begin{enumerate}
    	\begin{minipage}{7cm}
			\item $G$ is regular.
            \item $G$ has maximal class.
            \item $G$ is powerful.
            \item $G'$ is regular.        
		\end{minipage}
		\begin{minipage}{8cm}
            \item $G'$ is powerful.
            \item $G'$ is order-closed.
		    \item $G$ has order at most $p^{p+2}$. 
		    \item $G$ has nilpotency class at most $p$.
		\end{minipage}
    \end{enumerate}
\end{theorem}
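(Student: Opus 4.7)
The plan is to translate the geometric statement into the group-theoretic property~$(\mathcal{C})$ and then quote the collected results from Section~\ref{section: gp thy iso deck groups}. First I would invoke Lemma~\ref{connection order singularities and commutator} together with Remark~\ref{all singularities have same order - p group}: a $p$-origami $(G,x,y)$ of degree $p^n$ lies in the stratum $\mathcal{H}(0)$ when $[x,y]=1$ and otherwise in $\mathcal{H}(p^{n-k}\times(p^k-1))$, where $p^k=\ord([x,y])$. Thus the stratum is determined purely by $|G|$ and $\ord([x,y])$. Since, by Lemma~\ref{p-ori = 2-gen set}, every $p$-origami with deck group $G$ is obtained from a pair of generators of $G$, all such origamis lie in a common stratum if and only if $\ord([x,y])$ is independent of the chosen pair $(x,y)$ of generators, i.e.\ if and only if $G$ has \property.

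With this reduction in place, the theorem becomes a direct consequence of the results already assembled in Section~\ref{section: gp thy iso deck groups}. Cases (1), (2), (3), (4), (5), (6) are read off the diagram in Theorem~\ref{thm-diagram}: each of the hypotheses sits above the gray line there, which exactly records the implication that the given property forces \property. Concretely, (1) uses that a regular $p$-group has a regular (hence order-closed) commutator subgroup, so Proposition~\ref{G' weakly order-closed implies property} applies; (2) is Proposition~\ref{iso p-gp of max class same stratum}; (3) follows from Corollary~\ref{iso powerful p-gp same stratum}; (4) and (6) are instances of Proposition~\ref{G' weakly order-closed implies property} (noting that regular or order-closed implies weakly order-closed); and (5) is Proposition~\ref{iso p-gp powerful G' same stratum}.

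For (7) and (8), I would appeal to Remark~\ref{rem-small-groups}: if $|G|\le p^{p+2}$, then by Burnside's basis theorem (Lemma~\ref{frattini group props}) one has $|G'|\le p^{p}$ since $G'\subseteq\Phi(G)$ and $|G/\Phi(G)|\ge p^{2}$, and such a group is regular by \cite[Satz~10.2~b)]{hup}; if the nilpotency class of $G$ is at most $p$, then $G$ itself is regular by \cite[Satz~10.2~a)]{hup}. In both situations $G'$ is regular, and Proposition~\ref{iso regular gp same stratum} yields \property.

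Since this is essentially a bookkeeping argument that gathers the implications collected earlier, the main ``obstacle'' is not a calculation but a curatorial one: making sure each of the eight hypotheses is routed through the correct result among Proposition~\ref{iso regular gp same stratum}, Proposition~\ref{iso p-gp of max class same stratum}, Corollary~\ref{iso powerful p-gp same stratum}, Proposition~\ref{iso p-gp powerful G' same stratum}, Proposition~\ref{G' weakly order-closed implies property}, and Remark~\ref{rem-small-groups}, without relying on any implication that the diagram in Theorem~\ref{thm-diagram} explicitly rules out (in particular, weakly power-closed $G'$ is \emph{not} sufficient, cf.\ Corollary~\ref{cor. weakly power-closed gp not property C}).
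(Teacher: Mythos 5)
Your proposal is correct and follows essentially the same route as the paper: the paper's proof likewise reduces the statement to \property via the stratum--commutator-order correspondence and then cites \Cref{thm-diagram} for cases (1)--(6) and \Cref{rem-small-groups} for cases (7) and (8). Your version merely spells out the individual citations (\Cref{iso p-gp of max class same stratum}, \Cref{iso powerful p-gp same stratum}, \Cref{iso p-gp powerful G' same stratum}, \Cref{G' weakly order-closed implies property}) that the diagram aggregates, and all of them are routed correctly.
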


\begin{proof}
    By definition, a finite $2$-generated group $G$ has \property, if there exists a natural number $n$ such that for each $2$-generating set $\{x,y\}$ of $G$ the order of $[x,y]$ equals $n$. Hence it is sufficient to show that \property holds for all groups satisfying one of the properties (1) to (8). This follows from \Cref{thm-diagram}. The connection to groups up to a certain order or nilpotency class is made in \Cref{rem-small-groups}.
\end{proof}

For certain $p$-groups with \property, we studied the constant given by the order of the commutator of a pair of generators in \Cref{section: gp thy iso deck groups} (see~\Cref{iso 2-gp of max class same stratum}, \Cref{lem maximal class p^n 5<=n<=p+1}, and \Cref{minimal non-P_i gps}). We deduce the corresponding results for the strata of the respective $p$-origamis.

\begin{corollary}
    Any $2$-origami of degree $2^n$ whose deck group has maximal class lies in the stratum $\HH\left(4\times \left(2^{n-2}-1\right)\right)$.
\end{corollary}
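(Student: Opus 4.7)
The plan is to combine three results established earlier in the paper. First, by \Cref{iso 2-gp of max class same stratum}, any $2$-generated $2$-group $G$ of maximal class and order $2^n$ has \property, and moreover for every pair of generators $x,y$ of $G$ the commutator $[x,y]$ has order exactly $2^{n-2}$. This is the substantive input: it was shown by invoking \Cref{2-gps of max class} to classify $G$ as a dihedral, (generalized) quaternion, or semidihedral group, and computing $[r,s]$ directly in each presentation.

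Next, given any $2$-origami $\OO = (G,x,y)$ of degree $2^n$ with deck group $G$ of maximal class, \Cref{connection order singularities and commutator} implies that every singularity of $\OO$ has multiplicity equal to $\ord([x,y]) = 2^{n-2}$. In particular, $n\geq 2$ guarantees $[x,y]\neq 1$, so singularities do occur and $\OO$ does not lie in $\HH(0)$.

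Finally, \Cref{all singularities have same order} (or equivalently \Cref{all singularities have same order - p group}) gives the identity
\[
|\Sigma|\cdot \mult(s) \;=\; |G| \;=\; 2^n,
\]
so the number of singularities is $|\Sigma| = 2^n/2^{n-2} = 4$. Hence $\OO$ has exactly $4$ singularities of multiplicity $2^{n-2}$, placing it in the stratum $\HH\bigl(4\times (2^{n-2}-1)\bigr)$, as claimed. There is no real obstacle here: the corollary is an immediate assembly of \Cref{iso 2-gp of max class same stratum}, \Cref{connection order singularities and commutator}, and the basic counting remark on normal origamis.
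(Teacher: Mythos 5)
Your proof is correct and follows exactly the route the paper intends: the corollary is stated there as an immediate consequence of \Cref{iso 2-gp of max class same stratum} combined with \Cref{connection order singularities and commutator} and the counting identity from \Cref{all singularities have same order}, which is precisely your assembly. (Only a pedantic caveat: your parenthetical that $n\geq 2$ forces $[x,y]\neq 1$ is off for the degenerate case $n=2$, where the group is abelian; the statement is really meant for $n\geq 3$, as in \Cref{2-gps of max class}.)
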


\begin{corollary}
    For $5\le n\le {p+1}$ and an odd prime $p$, any $p$-origami of degree $p^n$ whose deck group has maximal class lies in the stratum $\HH\left( p^{n-1}\times\left( p-1\right)\right)$.
\end{corollary}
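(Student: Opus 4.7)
The plan is to combine three earlier results: the group-theoretic fact about commutator orders for $p$-groups of maximal class in this range (\Cref{lem maximal class p^n 5<=n<=p+1}), the translation of commutator orders into singularity multiplicities (\Cref{connection order singularities and commutator}), and the degree-multiplicity-count relation for normal $p$-origamis (\Cref{all singularities have same order - p group}).

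First, I would take an arbitrary $p$-origami $\mathcal{O}=(G,x,y)$ of degree $p^n$ whose deck group $G$ has maximal class, with $5\le n\le p+1$ and $p$ odd. By \Cref{lem maximal class p^n 5<=n<=p+1}, $G$ has \property and every pair of generators satisfies $\ord([x,y])=p$. In particular, the commutator is non-trivial, so by \Cref{connection order singularities and commutator}, the torus cover induced by $\mathcal{O}$ is ramified, and the multiplicity of each singularity equals $\ord([x,y])=p$.

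Next, I would invoke \Cref{all singularities have same order - p group}: since $\mathcal{O}$ is a normal $p$-origami outside $\HH(0)$, its degree factors as $d=(a+1)\cdot k$, where $a+1$ is the common multiplicity of the singularities and $k$ is their number. Substituting $d=p^n$ and $a+1=p$ yields $k=p^{n-1}$. Therefore $\mathcal{O}$ has exactly $p^{n-1}$ singularities, each of multiplicity $p$, which places it in the stratum $\HH\bigl(p^{n-1}\times(p-1)\bigr)$.

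There is no real obstacle here: all the work has already been done at the group-theoretic level in \Cref{lem maximal class p^n 5<=n<=p+1}, which rests on the non-trivial fact \cite[Kapitel~III, Hilfssatz~14.14]{hup} that $\exp(G')=p$ for these groups. The present corollary is simply the geometric translation, following exactly the template used in the analogous result for $2$-groups (\Cref{iso 2-gp of max class same stratum}).
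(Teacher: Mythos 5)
Your proposal is correct and matches the paper's (implicit) argument exactly: the paper states this corollary as a direct deduction from \Cref{lem maximal class p^n 5<=n<=p+1} combined with \Cref{connection order singularities and commutator} and the counting relation in \Cref{all singularities have same order - p group}, which is precisely the chain you spell out. The arithmetic $p^n=(a+1)\cdot k$ with $a+1=p$ giving $k=p^{n-1}$ is right, so nothing is missing.
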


\begin{corollary}
    Any $p$-origami of degree $p^n$ whose deck transformation group is a minimal non-power-closed $p$-group or a minimal non-order-closed $p$-group lies in the stratum $\HH\left( p^{n-1}\times\left( p-1\right)\right)$.
\end{corollary}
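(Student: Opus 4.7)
The plan is to combine the group-theoretic result \Cref{minimal non-P_i gps} with the geometric translation developed in \Cref{section: geometric motivation} and \Cref{subsection: strata of p-origamis}. The work has already been done: the corollary is essentially the geometric shadow of the lemma, so what remains is just the bookkeeping to identify the correct stratum.

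First, let $\OO=(G,x,y)$ be a $p$-origami of degree $p^n$ whose deck transformation group $G$ is either a minimal non-power-closed or a minimal non-order-closed $p$-group. By \Cref{minimal non-P_i gps}, $G$ has \property and, moreover, the commutator $[x,y]$ has order exactly $p$ for every pair of generators $x,y$ of $G$. In particular, the order of $[x,y]$ does not depend on the chosen generating pair, so the stratum of $\OO$ is determined by $G$ alone.

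Next, I invoke \Cref{connection order singularities and commutator}, which states that the multiplicity of each singularity of $\OO$ equals $\ord([x,y])$; hence every singularity of $\OO$ has multiplicity $p$. Combined with \Cref{all singularities have same order - p group}, which ensures that all singularities of a $p$-origami share the same multiplicity and that the degree equals the product of the number of singularities and this common multiplicity, we obtain
\[
|\Sigma|\cdot p = p^n,
\]
so $\OO$ has exactly $p^{n-1}$ singularities, each of multiplicity $p$. In the notation of the stratum, multiplicity $p$ corresponds to the parameter $a=p-1$, so $\OO$ lies in $\HH(p^{n-1}\times(p-1))$, proving the claim.

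There is no real obstacle here: both the group-theoretic input (\Cref{minimal non-P_i gps}) and the geometric dictionary (\Cref{connection order singularities and commutator}, \Cref{all singularities have same order - p group}) are already established earlier in the paper. The only minor care needed is to confirm $G$ is non-abelian — otherwise $[x,y]=1$ and no singularity arises — but this is automatic since a minimal non-power-closed or minimal non-order-closed $p$-group is by definition not itself power-closed or order-closed, whereas every abelian $p$-group trivially has both properties.
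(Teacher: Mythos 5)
Your proof is correct and follows exactly the route the paper intends: the corollary is stated as an immediate consequence of \Cref{minimal non-P_i gps} combined with the dictionary of \Cref{connection order singularities and commutator} and \Cref{all singularities have same order - p group}, which is precisely your bookkeeping. Your closing remark about non-abelianness is fine but redundant, since \Cref{minimal non-P_i gps} already asserts $\ord([x,y])=p\neq 1$.
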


In \Cref{gp without property (C)}, we constructed for each prime a $2$-generated $p$-group that does not have \property. Hence we obtain the following proposition.

\begin{proposition}
    For each prime $p$, there exist $p$-origamis with isomorphic deck transformation group that lie in different strata.
\end{proposition}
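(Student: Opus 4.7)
The plan is to combine the existence result for $p$-groups without \property (\Cref{gp without property (C)}) with the geometric dictionary between normal origamis and pairs of generators. Concretely, for each prime $p$, \Cref{gp without property (C)} supplies a $2$-generated $p$-group $H_p$ together with elements $x,x',y \in H_p$ such that $\langle x,y\rangle = \langle x',y\rangle = H_p$ and $\ord([y,x]) \neq \ord([y,x'])$.

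Next, I would apply \Cref{p-ori = 2-gen set}(i) to the data $(H_p, x, y)$ and $(H_p, x', y)$ to obtain two $p$-origamis $\OO \vcentcolon= (H_p, x, y)$ and $\OO' \vcentcolon= (H_p, x', y)$, both of degree $|H_p|$ and both admitting $H_p$ as deck transformation group. By \Cref{connection order singularities and commutator}, the multiplicity of each singularity of $\OO$ equals $\ord([x,y])$, while the multiplicity of each singularity of $\OO'$ equals $\ord([x',y])$. Since the two commutator orders are distinct, the singularity multiplicities of $\OO$ and $\OO'$ are distinct as well. Together with \Cref{all singularities have same order}, which guarantees that all singularities of a normal origami have the same multiplicity, this forces $\OO$ and $\OO'$ to lie in different strata.

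There is no real obstacle here; the work was done in \Cref{gp without property (C)}. The only point worth being careful about is that the two pairs really must generate $H_p$ (so that \Cref{p-ori = 2-gen set}(i) produces $p$-origamis whose deck group is the full group $H_p$, rather than some proper subgroup), but this is part of the conclusion of \Cref{gp without property (C)}. Thus the proposition follows immediately by translating the group-theoretic counterexample through the correspondence between normal origamis and $2$-generated groups.
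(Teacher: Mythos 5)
Your proposal is correct and follows the paper's own argument: both invoke \Cref{gp without property (C)} to produce the group $H_p$ with two generating pairs whose commutators have different orders, and then translate this into two $p$-origamis in different strata via \Cref{p-ori = 2-gen set} and \Cref{connection order singularities and commutator}. Your version merely spells out the geometric dictionary that the paper leaves implicit.
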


\begin{proof}
    In \Cref{gp without property (C)}, we proved for each prime $p$ the existence of a $2$-generated $p$-group $H_p$ which is contained in the Sylow $p$-subgroup of the symmetric group $S_{p^4}$ and does not have \property. Hence there exist $p$-origamis with deck transformation group  isomorphic to $H_p$ that lie in different strata.
\end{proof}

\begin{rem} 
    The 2-group $G$ with generating sets $(x,y)$ and $(x, y^3)$ defined in \Cref{ex. power-closed gp not property C} is weakly power-closed, but not power-closed. Recall that the orders of the commutators $\ord([x,y])$ and $\ord([x,y^3])$ are $4$ and $2$, respectively. We obtain that the $2$-origamis $(G,x,y)$ and $(G,x,y^3)$ lie in different strata, namely $\HH(2^{10}\times 3)$ and $\HH(2^{11}\times 1)$. Here we use that the group $G$ has order $2^{12}$. 
\end{rem}

\section{Outlook: Infinite origamis and pro-\texorpdfstring{$p$}{p} groups}\label{section infinite origamis}

So far, we have considered surfaces that are also called finite translation surfaces, i.e., the surface can be described as finitely many polygons with edge identifications via translations. As a generalization, \textbf{infinite translation surfaces} have been studied during the past 10 years (see e.g. \cite{bowman-valdez}). In contrast to finite translation surfaces, one allows countably many polygons glued by translations. For a detailed introduction to infinite translation surfaces see \cite{Randecker} and \cite{DHV}. 

In this section, we consider a well-known infinite translation surface called staircase origami. Moreover, we generalize the notion of \property to pro-$p$ groups, certain infinite analogs of finite $p$-groups. We then transfer some results from \Cref{section: gp thy iso deck groups} to pro-$p$ groups and deduce conclusions about a class of translation surfaces which we call infinite normal origamis.

Let $\OO\to \T$ be a countably infinite, normal cover of the torus $\T$ ramified over at most one point. Then $\OO$ is called an \textbf{infinite normal origami}. Amongst others, these surfaces have been studied by \cite{Karg}, where they are called regular origamis. As in the finite case, they correspond to a special class of infinite translation surfaces where all polygons are squares of the same size. The concepts introduced in \Cref{section: geometric motivation} carry over to infinite origamis. Given a countably infinite group $G$ with $2$-generating set $(x,y)$, one constructs an infinite normal origami $(G,x,y)$ as in \Cref{p-ori = 2-gen set}. One has a natural bijection between the squares in the tiling and the elements of the deck group. Singularities of infinite normal origamis can have finite cone angle, i.e., $2\pi n$ for $n\in\N$ as in the case of finite origamis, or infinite angle, i.e., a neighborhood of the singularity is isometric to a neighborhood of the branching point of the infinite cyclic branched cover of $\R^2$. As in the case of finite normal origamis, the cone angle of all singularities of the origami $(G,x,y)$ coincide and are equal to the order of the commutator $[x,y]$. 

\begin{example}\label[example]{ex. staircase origami}
    An example for an infinite normal origami is the staircase origami $St_\infty$ in \Cref{staircase origami}. It has been studied both from the geometric (see e.g. \cite{HubertSchmithuesen}) and from the dynamical point of view (see e.g. \cite{HooperWeiss}). 
    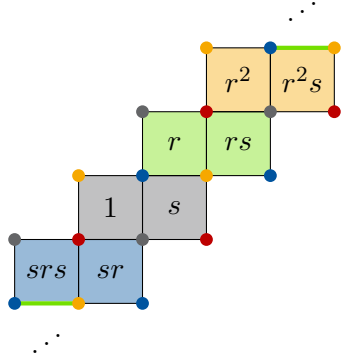
\begin{figure}
        \centering
        \begin{tikzpicture}[remember picture,scale=0.85]

\fill[darkblue,opacity=0.4](0,0)--(2,0)--(2,1)--(0,1);
\fill[gray,opacity=0.4](1,1)--(3,1)--(3,2)--(1,2);
\fill[green,opacity=0.4](2,2)--(4,2)--(4,3)--(2,3);
\fill[orange,opacity=0.4](3,3)--(5,3)--(5,4)--(3,4);

\node[scale=1.02] 
(1) at (0.5,0.5) {$srs$};
\node[scale=1.02] 
($i$) at (1.5,0.5) {$sr$};
\node[scale=1.02] 
($-1$) at (1.5,1.5) {$1$};
\node[scale=1.02] 
($-i$) at (2.5,1.5) {$s$};

\node[scale=1.02] 
(j) at (2.5,2.5) {$r$};
\node[scale=1.02] 
(-k) at (3.5,2.5) {$rs$};
\node[scale=1.02] 
($-j$) at (3.5,3.5) {$r^2$};
\node[scale=1.02] 
($-k$) at (4.5,3.5) {$r^2s$};
\node[scale=1.02] 
($-k$) at (4.5,4.7) {\reflectbox{$\ddots$}};
\node[scale=1.02] 
($-k$) at (.5,-.5) {\reflectbox{$\ddots$}};

\path[draw, ultra thick, whiteline] (1,0) -- (0,0);
\path[draw, ultra thick, greenline] (1,0) -- (0,0);

\draw (0,0) -- (0,1) -- (1,1) -- (1,0);
\draw (1,0) -- (1,1) -- (2,1) -- (2,0) -- (1,0);

\draw (1,1) -- (1,2) -- (2,2) -- (2,1) -- (1,1);
\draw (2,1) -- (2,2) -- (3,2) -- (3,1) -- (2,1);

\draw (2,2) -- (2,3) -- (3,3) -- (3,2) -- (2,2);
\draw (3,2) -- (3,3) -- (4,3) -- (4,2) -- (3,2);

\draw (3,3) -- (3,4) -- (4,4) -- (4,3) -- (3,3);
\draw (4,3) -- (4,4);
\draw (5,4) -- (5,3) -- (4,3);

\path[draw, whiteline] (4,4) -- (5,4);
\path[draw, ultra thick, greenline] (4,4) -- (5,4);

\fill[darkblue] (0,0) circle (0.1);
\fill[darkblue] (2,0) circle (0.1);
\fill[darkblue] (2,2) circle (0.1);
\fill[darkblue] (4,2) circle (0.1);
\fill[darkblue] (4,4) circle (0.1);

\fill[orangedot] (1,0) circle (0.1);
\fill[orangedot] (1,2) circle (0.1);
\fill[orangedot] (3,2) circle (0.1);
\fill[orangedot] (3,4) circle (0.1);
\fill[orangedot] (5,4) circle (0.1);

\fill[gray] (0,1) circle (0.1);
\fill[gray] (2,1) circle (0.1);
\fill[gray] (2,3) circle (0.1);
\fill[gray] (4,3) circle (0.1);

\fill[red] (1,1) circle (0.1);
\fill[red] (3,1) circle (0.1);
\fill[red] (3,3) circle (0.1);
\fill[red] (5,3) circle (0.1);

\end{tikzpicture}
        \caption{The infinite staircase origami $St_\infty=(D_\infty,s,sr)$ has four singularities of infinite cone angle. All vertical and horizontal cylinders have length 2. Opposite sides are identified.}\label[figure]{staircase origami}
    \end{figure}
    The deck group of the origami $St_\infty$ is the infinite dihedral group
    $$D_\infty\vcentcolon=\langle r,s~|~s^2=1, srs=r^{-1}\rangle,$$
    for $St_\infty$, the elements $s,sr$ are chosen as generators.
    The commutator subgroup of $D_\infty$ is the infinite cyclic group generated by $[r,s]=r^{-2}$. Hence, for any pair of generators $x,y$, the commutator $[x,y]$ has order infinity. We conclude that each infinite normal origami with deck group $D_\infty$ has $4$ singularities of infinite cone angle.
    
    Choosing the generators $r,s$ we obtain a surface different from $St_\infty$. In contrast to the latter, it has $2$ infinite horizontal cylinders, as shown in \Cref{origami 2 D_infty}.
    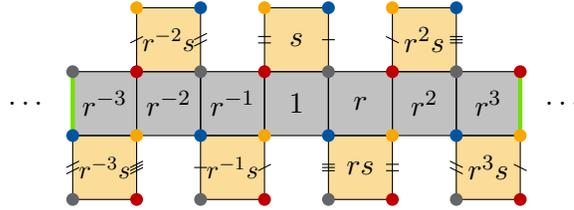
\begin{figure} 
        \centering
        \begin{tikzpicture}[remember picture,scale=0.85]

\fill[gray,opacity=0.4](-2,0)--(5,0)--(5,1)--(-2,1);

\fill[orange,opacity=0.4](-1,1) -- (-1,2) -- (0,2) -- (0,1) -- (-1,1);
\fill[orange,opacity=0.4](1,1) -- (1,2) -- (2,2) -- (2,1) -- (1,1);
\fill[orange,opacity=0.4](3,1) -- (3,2) -- (4,2) -- (4,1) -- (3,1);
\fill[orange,opacity=0.4](-2,-1) -- (-2,0) -- (-1,0) -- (-1,-1) -- (-2,-1);
\fill[orange,opacity=0.4](0,-1) -- (0,0) -- (1,0) -- (1,-1) -- (0,-1);
\fill[orange,opacity=0.4](2,-1) -- (2,0) -- (3,0) -- (3,-1) -- (2,-1);
\fill[orange,opacity=0.4](4,-1) -- (4,0) -- (5,0) -- (5,-1) -- (4,-1);

\node[scale=1.02] 
(1) at (-1.5,0.5) {$r^{-3}$};
\node[scale=1.02] 
(1) at (-0.5,0.5) {$r^{-2}$};
\node[scale=1.02] 
(1) at (0.5,0.5) {$r^{-1}$};
\node[scale=1.02] 
($i$) at (1.5,0.5) {$1$};
\node[scale=1.02] 
($i$) at (2.5,0.5) {$r$};
\node[scale=1.02] 
($i$) at (3.5,0.5) {$r^2$};
\node[scale=1.02] 
($i$) at (4.5,0.5) {$r^3$};

\node[scale=.92] 
($-1$) at (-.5,1.5) {$r^{-2}s$};
\node[scale=1.02] 
($-1$) at (1.5,1.5) {$s$};
\node[scale=1.02] 
($-1$) at (3.5,1.5) {$r^2s$};

\node[scale=.9] 
($-1$) at (-1.5,-.5) {$r^{-3}s$};
\node[scale=.9] 
($-1$) at (.5,-.5) {$r^{-1}s$};
\node[scale=1.02] 
($-1$) at (2.5,-.5) {$rs$};
\node[scale=1.02] 
($-1$) at (4.5,-.5) {$r^3s$};

\node[scale=1.02] 
($-k$) at (-2.7,.5) {$\dots$};
\node[scale=1.02] 
($-k$) at (5.7,.5) {$\dots$};

\path[draw, whiteline] (5,0) -- (5,1);
\path[draw, ultra thick, greenline] (-2,0) -- (-2,1);
\path[draw, ultra thick, greenline] (5,0) -- (5,1);

\draw (-2,1) -- (-1,1) -- (-1,0) -- (-2,0);
\draw (-1,0) -- (-1,1) -- (0,1) -- (0,0) -- (-1,0);
\draw (0,0) -- (0,1) -- (1,1) -- (1,0) -- (0,0);
\draw (1,0) -- (1,1) -- (2,1) -- (2,0) -- (1,0);
\draw (2,0) -- (2,1) -- (3,1) -- (3,0) -- (2,0);
\draw (3,0) -- (3,1) -- (4,1) -- (4,0) -- (3,0);
\draw (5,0) -- (4,0) -- (4,1) -- (5,1);

\draw (-1,1) -- (-1,2) -- (0,2) -- (0,1) -- (-1,1);
\draw (1,1) -- (1,2) -- (2,2) -- (2,1) -- (1,1);
\draw (3,1) -- (3,2) -- (4,2) -- (4,1) -- (3,1);

\draw (-2,-1) -- (-2,0) -- (-1,0) -- (-1,-1) -- (-2,-1);
\draw (0,-1) -- (0,0) -- (1,0) -- (1,-1) -- (0,-1);
\draw (2,-1) -- (2,0) -- (3,0) -- (3,-1) -- (2,-1);
\draw (4,-1) -- (4,0) -- (5,0) -- (5,-1) -- (4,-1);

\fill[darkblue] (-2,0) circle (0.1);
\fill[darkblue] (0,0) circle (0.1);
\fill[darkblue] (0,2) circle (0.1);
\fill[darkblue] (2,0) circle (0.1);
\fill[darkblue] (2,2) circle (0.1);
\fill[darkblue] (4,2) circle (0.1);
\fill[darkblue] (4,0) circle (0.1);

\fill[orangedot] (-1,0) circle (0.1);
\fill[orangedot] (-1,2) circle (0.1);
\fill[orangedot] (1,0) circle (0.1);
\fill[orangedot] (1,2) circle (0.1);
\fill[orangedot] (3,2) circle (0.1);
\fill[orangedot] (3,0) circle (0.1);
\fill[orangedot] (5,0) circle (0.1);

\fill[gray] (-2,1) circle (0.1);
\fill[gray] (-2,-1) circle (0.1);
\fill[gray] (0,1) circle (0.1);
\fill[gray] (0,-1) circle (0.1);
\fill[gray] (2,1) circle (0.1);
\fill[gray] (2,-1) circle (0.1);
\fill[gray] (4,1) circle (0.1);
\fill[gray] (4,-1) circle (0.1);

\fill[red] (-1,-1) circle (0.1);
\fill[red] (-1,1) circle (0.1);
\fill[red] (1,-1) circle (0.1);
\fill[red] (1,1) circle (0.1);
\fill[red] (3,1) circle (0.1);
\fill[red] (3,-1) circle (0.1);
\fill[red] (5,1) circle (0.1);
\fill[red] (5,-1) circle (0.1);


\draw (1.9,1.5) -- (2.1,1.5);
\draw (-.1,-0.5) -- (0.1,-0.5);

\draw (1.1,1.45) -- (0.9,1.45);
\draw (1.1,1.55) -- (0.9,1.55);
\draw (3.1,-.45) -- (2.9,-.45);
\draw (3.1,-.55) -- (2.9,-.55);

\draw (2.1,-.45) -- (1.9,-.45);
\draw (2.1,-.5) -- (1.9,-.5);
\draw (2.1,-.55) -- (1.9,-.55);
\draw (4.1,1.45) -- (3.9,1.45);
\draw (4.1,1.5) -- (3.9,1.5);
\draw (4.1,1.55) -- (3.9,1.55);

\draw (5.1,-.55) -- (4.9,-.45);
\draw (3.1,1.45) -- (2.9,1.55);

\draw (4.1,-.5) -- (3.9,-.4);
\draw (4.1,-.6) -- (3.9,-.5);

\draw (1.1,-.45) -- (.9,-.55);
\draw (-1.1,1.45) -- (-.9,1.55);

\draw (-1.9,-.4) -- (-2.1,-.5);
\draw (-1.9,-.5) -- (-2.1,-.6);
\draw (-.1,1.4) -- (.1,1.5);
\draw (-.1,1.5) -- (.1,1.6);

\draw (-.9,-.4) -- (-1.1,-.5);
\draw (-.9,-.45) -- (-1.1,-.55);
\draw (-.9,-.5) -- (-1.1,-.6);
\end{tikzpicture}
        \caption{The origami $(D_\infty,r,s)$ has $2$ infinite horizontal cylinders and infinitely many vertical cylinders of height 2. Opposite sides are identified unless marked otherwise.}\label[figure]{origami 2 D_infty}
    \end{figure}
\end{example}

The infinite dihedral group is a dense subgroup of the pro-$2$ group $\Z_{(2)}\ltimes C_2$. Pro-$p$ groups have played an essential role in the study of finite $p$-groups (see e.g. \cite{LGMcKpro-p}). It is a natural question, whether results of \Cref{section: gp thy iso deck groups} and \Cref{section: isomorphic deck transformation groups} can be transferred to certain infinite groups, in particular profinite and pro-$p$ groups, and infinite normal origamis. To this end, we extend the definition of \property to possibly infinite $2$-generated groups. Note that, as we do not consider topological groups yet, the groups under consideration are still countable.

\begin{definition} A (possibly infinite) $2$-generated group $G$ has \textbf{\property} if there is an element $k\in\N\cup\{\infty\}$ such that the order of $[x,y]$ equals $k$ for any pair $x,y$ of generators of $G$.
\end{definition}

\begin{example} Recall that the infinite dihedral group $D_\infty$ has an infinite cyclic commutator subgroup, i.e., $D_\infty$ has \property (see \Cref{ex. staircase origami}).
\end{example}

\newcommand\Ghat{\Hat{G}}

In the following, we consider $2$-generated profinite groups. For this purpose, recall that an \textbf{inverse system} of groups is a collection of groups $(G_i)_{i\in I}$ indexed by a directed poset $I$ and homomorphisms $\psi_{i,j}\colon G_j\to G_i$ for all $i,j\in I, i\leq j,$ such that $\psi_{i,i}$ is the identity and $\psi_{i,j} \circ \psi_{j,k}=\psi_{i,k}$ for all $i\leq j\leq k$. The \textbf{inverse limit} of an inverse system $(G_i)_i ,(\psi_{i,j})_{i\leq j}$ is the group 
$$
\Ghat := \{ (g_i)_i \in \prod_i G_i : \psi_{i,j}(g_j) = g_i \ \forall i\leq j\}
$$
together with the projection maps $\pi_i\colon \Ghat\to G_i$ onto the components. It is the categorical limit for the diagram described by the inverse system in the category of groups, and is distinguished by the corresponding universal property (see Diagram (\ref{daigram inverse limit})).

Recall that a \textbf{profinite group} is a Hausdorff, compact, completely-disconnected topological group. For any profinite group $G$, the quotients $G/N$ for all normal open subgroups $N$ of $G$ form an inverse system of finite groups whose inverse limit is isomorphic to $G$, and any inverse limit of an inverse system of finite groups meets the definition of a profinite group above. So equivalently, one can define profinite groups as those which are isomorphic to the inverse limit of an inverse system of finite groups.

For profinite, or more generally, topological groups, we can consider \textbf{topological generating sets}, those subsets which generate a dense subgroup. A profinite group is topologically $2$-generated if and only if it is isomorphic to the inverse limit of an inverse system of $2$-generated finite groups (\cite[Prop.~1.5]{DSMS}).

This allows us to define:

\begin{definition} A topologically $2$-generated profinite group has \textbf{\property} if it is isomorphic to the inverse limit of an inverse system of finite groups which have \property.
\end{definition}

\begin{lemma}\label[lemma]{lemma dense subgps}
    Any $2$-generated dense subgroup $G$ of a topologically $2$-generated profinite group $\Ghat$ with \property has \property. Moreover, the commutator order of a pair of generators is the same for any choice of a $2$-generated dense subgroup $G$.
\end{lemma}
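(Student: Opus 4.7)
The plan is to transfer \property from the finite quotients to the dense subgroup via the canonical projections, and to argue that the resulting commutator order is intrinsic to $\Ghat$. Write $\Ghat=\varprojlim_{i\in I} G_i$ as an inverse limit of an inverse system of finite groups $G_i$ with \property realizing the hypothesis, with projections $\pi_i\colon\Ghat\to G_i$ which, after the standard reduction to the inverse system of continuous finite quotients, we may take to be surjective; let $k_i\in\N$ denote the common commutator order for pairs of generators of $G_i$.

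For any generating pair $(x,y)$ of a $2$-generated dense subgroup $G\leq\Ghat$, I would first observe that $\pi_i(x),\pi_i(y)$ generate $G_i$: by continuity and surjectivity of $\pi_i$, the image $\pi_i(\langle x,y\rangle)=\langle\pi_i(x),\pi_i(y)\rangle$ is dense in the finite discrete group $G_i$, hence equal to $G_i$. Then \property of $G_i$ gives $\ord(\pi_i([x,y]))=\ord([\pi_i(x),\pi_i(y)])=k_i$, a value independent of the choice of $(x,y)$ or of the dense subgroup $G$.

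Finally, I would use that $\ord([x,y])$ in $G$ coincides with $\ord([x,y])$ in $\Ghat$, and that an element $g\in\varprojlim G_i$ satisfies $g^n=1$ if and only if $\pi_i(g)^n=1$ for every $i$. Since each connecting map $\psi_{i,j}$ carries a generating pair to a generating pair (by surjectivity), the family $(k_i)$ forms a chain under divisibility, and its least common multiple $\sup_i k_i\in\N\cup\{\infty\}$ equals $\ord([x,y])$. This quantity depends only on $\Ghat$, so $G$ has \property with this value, and the value is the same for every choice of $2$-generated dense subgroup. The main subtlety is that this supremum may be infinite -- as the staircase example illustrates -- in which case $[x,y]$ has infinite order in $\Ghat$, corresponding geometrically to singularities of infinite cone angle in the associated infinite normal origami.
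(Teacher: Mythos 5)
Your proposal is correct and follows essentially the same route as the paper: project a generating pair of the dense subgroup onto each finite quotient $G_i$ (where the images generate, by density and surjectivity of $\pi_i$), invoke \property of $G_i$ to get a constant order $k_i$, and recover $\ord([x,y])$ as the supremum of the $k_i$, which is independent of all choices. Your extra observations (the divisibility compatibility of the $k_i$ along the connecting maps, and the explicit density argument for why $\pi_i(x),\pi_i(y)$ generate $G_i$) are correct refinements of steps the paper treats more tersely.
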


\begin{proof} Let us write $\Ghat\cong \underleftarrow{\lim} (G_i,\psi_{i,j})$ for an inverse system $(G_i)_i,(\psi_{i,j})_{i\leq j}$ of finite groups with \property, and let us denote the structural projections of the inverse limit by $\pi_i\colon \Ghat\to G_i$.

From the inverse limit construction it is clear that the order of any element $g\in\Ghat$ is the supremum, finite or infinite, of the element orders $(\ord(\pi_i(g)))_i$ in the respective groups $G_i$. If $g$ is the commutator of a pair of generators $x,y$ of the dense subgroup $G$, then $\pi_i(g)$ is the commutator of a pair of generators in $G_i$ for every $i$, since $\pi_i$ is an epimorphism. 
But as $G_i$ has \property for any $i$, the number $\ord(\pi_i(g))$ is a constant for each $i$, independent of the choices of $G$ and its generators $x,y$.
\end{proof}

\begin{corollary}
    Let $G$ be a topologically $2$-generated profinite group with \property. The singularities of all infinite normal origamis whose deck group is a dense subgroup of $G$ have the same cone angle. 
\end{corollary}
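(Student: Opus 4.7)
The plan is to combine the geometric description of singularities for infinite normal origamis introduced at the beginning of \Cref{section infinite origamis} with the previous \Cref{lemma dense subgps}, so that the proof reduces to a one-line application.

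First I would recall the setup: an infinite normal origami whose deck group is a dense subgroup $H\leq G$ is, by the generalization of \Cref{p-ori = 2-gen set} to the countably infinite case discussed at the start of \Cref{section infinite origamis}, specified by a pair of generators $(x,y)$ of $H$. The geometric dictionary extending \Cref{connection order singularities and commutator} states that the cone angle at every singularity equals $2\pi\cdot\ord([x,y])$ when $\ord([x,y])$ is finite, and is infinite otherwise, i.e., in both cases it is completely determined by $\ord([x,y])\in\N\cup\{\infty\}$.

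Next I would invoke \Cref{lemma dense subgps}: since $G$ is topologically $2$-generated with \property, every $2$-generated dense subgroup $H\leq G$ inherits \property, and more importantly, the constant $k=\ord([x,y])$ appearing in the definition of \property depends only on $G$, not on the choice of $H$ nor on the choice of a generating pair $(x,y)$ of $H$. Combining this with the previous paragraph, the cone angle $2\pi k$ (or $\infty$) is the same across all such origamis.

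I do not expect any genuine obstacle: both the geometric step (singularities of a normal origami all have cone angle determined by the order of the commutator of its two monodromy generators, see \Cref{connection order singularities and commutator} and its infinite analog) and the group-theoretic step (\Cref{lemma dense subgps}) have already been established, so the corollary follows by immediate composition.
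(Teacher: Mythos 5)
Your proposal is correct and is precisely the argument the paper intends: the corollary is stated without an explicit proof because it follows immediately by combining the geometric dictionary from the start of \Cref{section infinite origamis} (cone angle of every singularity equals $2\pi\cdot\ord([x,y])$, or is infinite) with \Cref{lemma dense subgps}, which makes $\ord([x,y])$ a constant independent of the dense subgroup and of the generating pair. No gaps.
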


\begin{rem}\label[remark]{constr infinite ori from finite ones}
    Let $\left(G_i)_i,(\psi_{i,j}\right)_{i\le j}$ be an inverse system of $2$-generated finite groups with compatible $2$-generating sets $(x_i,y_i)$, i.e., $\psi_{i,j}(x_j)=x_i$ and $\psi_{i,j}(y_j)=y_i$. Denote the inverse limit by $(\hat G, \pi_i)$. Applying the universal mapping property of the inverse limit to the free group $F_2=\langle a,b \rangle$ and the monodromy maps $m_i:F_2\to G_i$, one obtains a unique group homomorphism $\alpha: F_2\to\hat G$ and the following commutative diagram
    \begin{align}\label{daigram inverse limit}
        \begin{split}
	        \xymatrix{
		        &F_2 \ar@{-->}[d]_-{\alpha} \ar@/^1.0pc/[rdd]^{m_j} \ar@/_1.0pc/[ldd]_{m_i}& \\
	            &\Hat{G} \ar[dl]_-{\pi_i} \ar[rd]^-{\pi_{j}}&\\
		        G_j \ar[rr]^-{\psi_{i,j}}& & G_i.
	        }
	    \end{split}
	\end{align}
	The image $\alpha(F_2)$ is a dense subgroup of $\hat G$ with $2$-generating set (in the sense of classical group theory) $x\vcentcolon=\alpha(a), y\vcentcolon=\alpha(b)$. Note that $\pi_i(x)=x_i, \pi_i(y)=y_i$ for all $i\in I$. One obtains an infinite normal origami $(\alpha(F_2),x,y)$ associated with the infinite sequence of finite normal origamis $(G_i,x_i,y_i)$. 
	
	Similarly, any given infinite normal origami $(H,x',y')$ for a dense subgroup $H$ of $\hat G$ yields infinitely many finite normal origamis $(G_i,\pi_i(x'_i),\pi_i(y'_i))$. Here, we use that the image $\pi_i(H)$ equals $G_i$ for each $i\in I$ (see \cite[Proposition 1.5]{DSMS}). 
\end{rem}

Recall that for any prime $p$, a \textbf{pro-$p$ group} is a profinite group $G$ such that $G/N$ is a finite $p$-group for any open normal subgroup $N$ of $G$. Equivalently, it is a group which is isomorphic to the inverse limit of an inverse system of finite $p$-groups. Pro-$p$ groups play a central role in the coclass conjectures by Leedham-Green and Newman (\cite{LGN-coclass}), proved by Leedham-Green (\cite{LG-coclass}) and Shalev (\cite{Sh-coclass}), concerning a way of classifying all finite $p$-groups.

\newcommand\cl{\operatorname{cl}}
We can now extend some of our results from finite $p$-groups to pro-$p$ groups. Let us call a pro-$p$ group \textbf{weakly order-closed} if products of elements of order $p^k$ have order at most $p^k$, for any $k\geq 0$. Let us also recall that a pro-$p$ group is called \textbf{powerful} if $G/\cl(\{g^{p^k} : g\in G\})$ is abelian for $k=1$ if $p>2$ and for $k=2$ if $p=2$, where $\cl(S)$ denotes the minimal closed subgroup generated by a set $S$ (see \cite[Definition 3.1]{DSMS}).

We observe that key results as \Cref{G' weakly order-closed implies property} or \Cref{iso powerful p-gp same stratum} for finite $p$-groups can be generalized to pro-$p$ groups.

\begin{lemma} A topologically $2$-generated pro-$p$ group $G$ has \property if either \begin{itemize}
    \item[(1)] $G'$ is weakly order-closed, or
    \item[(2)] $G'$ is powerful.
\end{itemize}
\end{lemma}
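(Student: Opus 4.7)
The plan is to mirror the finite-case proofs of \Cref{G' weakly order-closed implies property} and \Cref{iso p-gp powerful G' same stratum}, replacing algebraic generation of $G'$ by topological generation and exploiting continuity of the power map on the compact group $G$. First I would observe that, for any topological $2$-generating pair $x, y$ of $G$ with $c := [x, y]$, the closed commutator subgroup $G'$ coincides with the closed normal closure of $c$ in $G$: indeed, the quotient of $G$ by that closed normal closure is topologically generated by the commuting images of $x$ and $y$, hence topologically abelian, which forces $G' \subseteq N$, and the reverse inclusion is immediate from closedness of $G'$. Consequently $G'$ is topologically generated by the set $S := \{g\inv c g : g \in G\}$, whose elements all share the common order $p^m := \ord(c)$ (possibly $p^\infty$).

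In case (1), I would consider $\Omega := \{h \in G' : h^{p^m} = 1\}$ (taken to be $G'$ itself if $p^m = p^\infty$). Since the power map $h \mapsto h^{p^m}$ is continuous, $\Omega$ is closed; since $G'$ is weakly order-closed, $\Omega$ is closed under products, hence a closed subgroup of $G'$. It contains $S$ by construction, and $G'$ equals the topological closure of the algebraic subgroup generated by $S$, so $\Omega = G'$. Therefore $\exp(G') \le p^m$, and the containment $c \in G'$ forces $\exp(G') = p^m$.

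In case (2), I would invoke the standard property of powerful pro-$p$ groups (cf.\ Dixon-du Sautoy-Mann-Segal, Theorem 3.6) that, for a topological generating set $T$ of a powerful pro-$p$ group $H$, the agemo subgroup $H^{p^m} = \overline{\langle h^{p^m} : h \in H\rangle}$ is topologically generated by $\{t^{p^m} : t \in T\}$. Applied to $H = G'$ with $T = S$, this yields $(G')^{p^m} = 1$ and once more $\exp(G') = p^m$. In either case $\exp(G')$ is a structural invariant of $G$, so $\ord([x, y]) = p^m$ is the same for every topological $2$-generating pair.

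Finally, to match the paper's inverse-limit formulation of \property, I would realize $G = \varprojlim_{N} G/N$ over the open normal subgroups of $G$: in case (2), each $(G/N)' = G'N/N$ is a quotient of the powerful pro-$p$ group $G'$ and hence itself powerful as a finite $p$-group, so \Cref{iso p-gp powerful G' same stratum} gives \property for $G/N$ and thereby realizes the desired inverse-limit presentation. The main obstacle is the analogous step in case (1): weak order-closedness need not descend to finite quotients, so completing the reconciliation either requires a careful cofinal refinement of the inverse system (using that the common value of $\exp(G')$ has been pinned down globally, together with Burnside's basis theorem to lift generating pairs of $G/N$ to topological generators of $G$ for $N \subseteq \Phi(G)$) or, alternatively, a remark that the content of the lemma in the pro-$p$ setting is most naturally phrased as the direct constancy of $\ord([x,y])$, which has just been proved.
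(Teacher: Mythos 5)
Your proof is correct and follows essentially the same route as the paper's: $G'$ is the closed normal closure of $[x,y]$, hence topologically generated by conjugates of $[x,y]$ of a common order $p^m$, and then weak order-closedness (via your closed subgroup $\Omega$, where the paper instead takes the abstract countable subgroup generated by the conjugates and passes to its closure) respectively \cite[Prop.~3.6]{DSMS} for powerful groups forces $\exp(G')=p^m$, independently of the generating pair. Your concern about reconciling this with the inverse-limit definition of \property is legitimate but is not resolved in the paper either: the paper simply treats constancy of $\ord([x,y])$ over topological generating pairs as the content of \property in this setting, which is exactly the ``alternative'' resolution you propose (your quotient argument does complete the reconciliation in case (2), where powerfulness of $G'$ passes to the finite quotients).
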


\begin{proof} Let $x,y$ be generators of $G$. The commutator subgroup $G'$ is generated by conjugates of $[x,y]$, all having the same order, say $p^k$ for $k\geq 1$. We will show that assuming (1) or (2) implies this order is, in fact, the exponent of $G'$, independent of the choice of $x,y$. This proves the assertion that $G$ has \property.

If we assume (1), then the countable subgroup of $G'$ generated (without topological closure) by all conjugates of $[x,y]$ consists of elements of order at most $p^k$. Hence, indeed, $p^k$ is the exponent of the countable subgroup, and of $G'$ being its closure. 

If we assume (2), then $G'$ is a powerful pro-$p$ group generated by conjugates of $[x,y]$. Hence, by \cite[Prop.~3.6~(iii)]{DSMS}, the set of all $p^k$-th powers in $G'$ equals the closed subgroup generated by the $p^k$-th powers of the conjugates of $[x,y]$, which is the trivial subgroup. Thus, all $p^k$-th powers in $G'$ have to be trivial, so $p^k$ is the exponent of $G'$.
\end{proof}

As for finite $p$-groups this has implications for families of normal origamis:

\begin{corollary}
    Let $G$ be a topologically $2$-generated pro-$p$ group which is either powerful or has a weakly order-closed commutator subgroup. The singularities of all infinite normal origamis whose deck group is a dense subgroup of $G$ have the same cone angle. 
\end{corollary}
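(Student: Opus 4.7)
The plan is to use the preceding lemma to reduce the geometric statement to a purely group-theoretic one, and then invoke the correspondence between commutator orders and cone angles from the beginning of \Cref{section infinite origamis}. Under either hypothesis, the previous lemma shows that there is a value $k\in\N\cup\{\infty\}$ (namely $\exp(G')$) such that $\ord([x,y])=k$ for every pair $x,y$ of topological generators of $G$.

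Next, I would verify that the generators arising from an origami are indeed topological generators of $G$. Concretely, let $(H,x,y)$ be an infinite normal origami whose deck group $H$ is a dense $2$-generated subgroup of $G$, with generators $x,y\in H$. Then $\langle x,y\rangle = H$, and taking closures in $G$ yields $\overline{\langle x,y\rangle} = \overline{H} = G$, so $x,y$ topologically generate $G$. The previous lemma therefore gives $\ord([x,y]) = k$, independent of the choice of dense subgroup $H\leq G$ and the choice of its generators.

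Finally, the geometric interpretation from the start of \Cref{section infinite origamis} identifies the cone angle at each singularity of the origami $(H,x,y)$ with $2\pi\cdot\ord([x,y])$, where an infinite order corresponds to an infinite cone angle. Substituting $\ord([x,y])=k$, every singularity of every such origami has the same cone angle, equal to $2\pi k$ (or infinite). This is precisely the claim.

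The main (and really only) obstacle here was handled in the previous lemma; the present corollary is essentially a dictionary translation, with the sole subtlety being the observation that a generating set of a dense subgroup of $G$ is automatically a topological generating set of $G$.
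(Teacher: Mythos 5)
Your proof is correct and matches the paper's (implicit) argument: the corollary is stated without proof as an immediate consequence of the preceding lemma, and your reduction --- generators of a dense subgroup $H$ are topological generators of $G$ since $\overline{\langle x,y\rangle}=\overline{H}=G$, so the lemma fixes $\ord([x,y])$ and hence the cone angle $2\pi\cdot\ord([x,y])$ --- is exactly the intended reasoning. The only hair worth splitting is that the hypothesis ``$G$ powerful'' is not literally case (2) of the lemma (which assumes $G'$ powerful); one needs the standard fact that the closed commutator subgroup of a powerful pro-$p$ group is again powerful, a step the paper elides as well.
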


We conclude our exploration into the world of infinite deck groups with some examples. 

\begin{example}
    In this example, we introduce a setup to construct inverse systems from semidirect products of cyclic groups. Such groups appeared several times in \Cref{section: gp thy for strata results} and \Cref{section: results on p-oris}. For a prime $p$, let $C_{p^m}=\langle x\rangle$ and $C_{p^\ell}=\langle y\rangle$ be cyclic groups of order $p^m$ and $p^\ell$, respectively. If $a\in \Z$ is coprime to $p$ and $a^{p^m}\equiv 1 \mod p^\ell$, then $\varphi_a: C_{p^m}\to \aut(C_{p^\ell})$, $x\mapsto (y\mapsto y^a)$, defines a semidirect product $$H_{(\ell,m,a)}:=C_{p^\ell}\rtimes_{\varphi_a} C_{p^m}=\langle x,y ~|~x^{p^m}=y^{p^\ell}=1,x^{-1}yx=y^a\rangle.$$ 
    
    Fixing $a\in \Z$ coprime to $p$ and $m',\ell' \in \N$ such that $a^{p^{m'}}\equiv 1 \mod p^{\ell'}$, we construct two different inverse systems of finite $p$-groups. For $m\ge m'$, the semidirect product $H_{(\ell',m,a)}$ is well defined and we obtain epimorphisms $H_{(\ell',m+1,a)}\to H_{(\ell',m,a)}$ sending $x\mapsto x$ and $y\mapsto y$ in the respective groups. This defines an inverse system with a semidirect product $C_{p^\ell}\rtimes \Z_{(p)}$ as inverse limit.
    
    For $m\ge m'$, $\ell\ge \ell'$ with $m-\ell=m'-\ell'$, the semidirect product $H_{(\ell,m,a)}$ is well defined and we obtain epimorphisms $H_{(\ell+1,m+1,a)}\to H_{(\ell,m,a)}$ sending $x\mapsto x$ and $y\mapsto y$ in the respective groups. This defines an inverse system with a semidirect product $\Z_{(p)}\rtimes \Z_{(p)}$ as inverse limit.

    Choosing compatible $2$-generating sets for the groups forming an inverse system, one obtains an infinite sequence of finite normal origamis and an infinite normal origami which has a dense subgroup of the inverse limit as its deck group (see \Cref{constr infinite ori from finite ones}).
    
    Note, that all groups of the form $H_{(\ell,m,a)}$ have \property, since the commutator subgroup is always cyclic (generated by $y^{a-1}$). Hence, both constructions of inverse systems yield pro-$p$ groups with \property. The constructions can be applied to the groups $G^{(p)}_{(n,k)}$ constructed in \Cref{constr 2-oris gps} and \Cref{constr p-oris gps} for $p=2$ and $p>2$, respectively. 
    
    The dihedral groups $D_{2^n}=\langle r_n,s_n~|~r_n^{2^{n-1}}=s_n^2=1, s_n r_n s_n=r_n^{-1}\rangle$, considered in \Cref{ex. staircase origami}, form an inverse system constructed in a similar, but slightly different way. Here, one chooses $m=1, a=-1$ and lets $\ell$ vary. The infinite dihedral group $D_\infty$ is a $2$-generated countable dense subgroup of the inverse limit $\hat D_2=\Z_{(2)}\rtimes C_2$ of the $2$-generated $2$-groups $\left(D_{2^n}\right)_{n\geq0}$. For $n\in \N$ the tuples $(r_n,s_n)$ form compatible $2$-generating sets. Using the construction in \Cref{constr infinite ori from finite ones} we obtain the normal infinite origami $(D_\infty,r,s)$ in \Cref{origami 2 D_infty}. Choosing $(s_n,s_n r_n)$ as compatible $2$-generators the construction yields the infinite staircase origami in \Cref{staircase origami}.
\end{example}

In the following example, we construct an inverse system taking the quaternion group as a starting point. In this way we obtain an infinite series of normal origamis covering the Eierlegende Wollmilchsau (see \Cref{eierlegende wollmilchsau}).

\begin{example}\label[example]{expl::generalized-wollmilchsau} 
The groups
$$
W_n = \langle x,y \mid x^{2^{n+1}} = y^{2^{n+1}} = x^{2^n} y^{2^n} = 1, x\inv y x = y\inv \rangle
$$
are $2$-generated $2$-groups of order $2^{2n+1}$. For any $n\geq 1$, the element $z:=y^2=[x,y]$ is in the commutator subgroup $W'_n$, it clearly commutes with $y$, and
$$
x\inv z x = x\inv y^2 x = y^{-2} = z\inv
\ .
$$
Hence, by \Cref{commutator-generated}, the commutator subgroup $W'_n=\langle z\rangle$ is cyclic of order $2^n$. The defining relations of $W_n$ imply that its elements are of the form $x^a y^b$ for $0\leq a< 2^{n+1}$ and $0\leq b<2^n$. The images of $x, y$ in the abelianization $W_n/W'_n=W_n/\langle y^2\rangle$ have order $2^n$ and $2$, respectively, and the abelianization is isomorphic to $C_{2^n}\times C_2$.

We have epimorphisms from $W_{n+1}$ to $W_n$ for any $n\geq1$ sending $x\mapsto x$, $y\mapsto y$ (and $z\mapsto z$) in the respective groups. $W_1$ is isomorphic to the quaternion group with $8$ elements. 
The generators $(x,y)$ (viewed as elements in the groups $W_n$) form a set of compatible generators. Hence, we obtain an infinite sequence of normal origamis $(W_n,x,y)$ covering the Eierlegende Wollmilchsau. We give an example of these origamis for $n=2$ in \Cref{fig::generalized-wollmilchsau}.

\begin{center}
    \captionsetup{type=figure}
    \begin{tikzpicture}[remember picture, scale=0.9]

\node[scale=0.9] 
(1) at (-0.5,0.5) {$1$};
\node[scale=0.9] 
($x$) at (0.5,0.5) {$x$};
\node[scale=0.9] 
($x^2$) at (1.5,0.5) {$x^2$};
\node[scale=0.9] 
($x^3$) at (2.5,0.5) {$x^3$};
\node[scale=0.68] 
($x^4$) at (3.5,0.5) {$x^4=y^4$};
\node[scale=0.9] 
($x^5$) at (4.5,0.5) {$x^5$};
\node[scale=0.9] 
($x^6$) at (5.5,0.5) {$x^6$};
\node[scale=0.9] 
($x^7$) at (6.5,0.5) {$x^7$};

\node[scale=0.9] 
(y) at (-0.5,1.5) {$y$};
\node[scale=0.9] 
($xy^7$) at (0.5,-0.5) {$xy^7$};
\node[scale=0.9] 
($x^3y^7$) at (2.5,-0.5) {$x^3y^7$};
\node[scale=0.9] 
($x^3y^2$) at (1.5,1.5) {$x^2y$};
\node[scale=0.9] 
($xy^3$) at (4.5,-0.5) {$xy^3$};
\node[scale=0.9] 
($y^5$) at (3.5,1.5) {$y^5$};
\node[scale=0.9] 
($x^2y^5$) at (5.5,1.5) {$x^2y^5$};
\node[scale=0.9] 
($x^3y^3$) at (6.5,-0.5) {$x^3y^3$};

\node[scale=0.9] 
($y^2$) at (-0.5,2.5) {$y^2$};
\node[scale=0.9] 
($y^3$) at (-0.5,3.5) {$y^3$};
\node[scale=0.9] 
($xy^6$) at (0.5,-1.5) {$xy^6$};
\node[scale=0.9] 
($xy^5$) at (0.5,-2.5) {$xy^5$};
\node[scale=0.9] 
($x^3y^6$) at (2.5,-1.5) {$x^3y^6$};
\node[scale=0.9] 
($x^3y^5$) at (2.5,-2.5) {$x^3y^5$};
\node[scale=0.9] 
($x^2y^2$) at (1.5,2.5) {$x^2y^2$};
\node[scale=0.9] 
($x^2y^3$) at (1.5,3.5) {$x^2y^3$};
\node[scale=0.9] 
($xy^2$) at (4.5,-1.5) {$xy^2$};
\node[scale=0.9] 
($xy$) at (4.5,-2.5) {$xy$};
\node[scale=0.9] 
($y^6$) at (3.5,2.5) {$y^6$};
\node[scale=0.9] 
($y^7$) at (3.5,3.5) {$y^7$};
\node[scale=0.9] 
($x^2y^6$) at (5.5,2.5) {$x^2y^6$};
\node[scale=0.9] 
($x^2y^7$) at (5.5,3.5) {$x^2y^7$};
\node[scale=0.9] 
($x^3y^2$) at (6.5,-1.5) {$x^3y^2$};
\node[scale=0.9] 
($x^3y$) at (6.5,-2.5) {$x^3y$};


\draw (0,0) -- (1,0) -- (1,1) -- (0,1) -- (0,0);
\draw (1,0) -- (2,0) -- (2,1) -- (1,1);
\draw (2,0) -- (3,0) -- (3,1) -- (2,1);
\draw (0,0) -- (-1,0) -- (-1,1) -- (0,1);
\draw (3,0) -- (4,0) -- (4,1) -- (3,1);
\draw (4,0) -- (5,0) -- (5,1) -- (4,1);
\draw (5,0) -- (6,0) -- (6,1) -- (5,1);
\draw (6,0) -- (7,0) -- (7,1) -- (6,1);

\draw (0,0) -- (0,-1) -- (1,-1) -- (1,0);
\draw (2,0) -- (2,-1) -- (3,-1) -- (3,0);
\draw (4,0) -- (4,-1) -- (5,-1) -- (5,0);
\draw (6,0) -- (6,-1) -- (7,-1) -- (7,0);

\draw (0,-1) -- (0,-2) -- (1,-2) -- (1,-1);
\draw (2,-1) -- (2,-2) -- (3,-2) -- (3,-1);
\draw (4,-1) -- (4,-2) -- (5,-2) -- (5,-1);
\draw (6,-1) -- (6,-2) -- (7,-2) -- (7,-1);

\draw (0,-2) -- (0,-3) -- (1,-3) -- (1,-2);
\draw (2,-2) -- (2,-3) -- (3,-3) -- (3,-2);
\draw (4,-2) -- (4,-3) -- (5,-3) -- (5,-2);
\draw (6,-2) -- (6,-3) -- (7,-3) -- (7,-2);

\draw (1,1) -- (1,2) -- (2,2) -- (2,1);
\draw (0,1) -- (0,2) -- (-1,2) -- (-1,1);
\draw (4,1) -- (4,2) -- (3,2) -- (3,1);
\draw (6,1) -- (6,2) -- (5,2) -- (5,1);

\draw (1,2) -- (1,3) -- (2,3) -- (2,2);
\draw (0,2) -- (0,3) -- (-1,3) -- (-1,2);
\draw (4,2) -- (4,3) -- (3,3) -- (3,2);
\draw (6,2) -- (6,3) -- (5,3) -- (5,2);

\draw (0,3) -- (0,4) -- (-1,4) -- (-1,3);
\draw (2,3) -- (2,4) -- (1,4) -- (1,3);
\draw (4,3) -- (4,4) -- (3,4) -- (3,3);
\draw (6,3) -- (6,4) -- (5,4) -- (5,3);

%
\fill[darkblue] (-1,1) circle (0.1);
\fill[darkblue] (-1,3) circle (0.1);
\fill[darkblue] (3,1) circle (0.1);
\fill[darkblue] (3,-1) circle (0.1);
\fill[darkblue] (3,-3) circle (0.1);
\fill[darkblue] (3,3) circle (0.1);
\fill[darkblue] (7,1) circle (0.1);
\fill[darkblue] (7,-1) circle (0.1);
\fill[darkblue] (7,-3) circle (0.1);

%
\fill[green] (1,3) circle (0.1);
\fill[green] (1,1) circle (0.1);
\fill[green] (1,-1) circle (0.1);
\fill[green] (1,-3) circle (0.1);
\fill[green] (5,3) circle (0.1);
\fill[green] (5,1) circle (0.1);
\fill[green] (5,-1) circle (0.1);
\fill[green] (5,-3) circle (0.1);
\fill[gray2] (0,4) circle (0.1);
\fill[gray2] (0,2) circle (0.1);
\fill[gray2] (0,0) circle (0.1);
\fill[gray2] (0,-2) circle (0.1);
\fill[gray2] (4,4) circle (0.1);
\fill[gray2] (4,2) circle (0.1);
\fill[gray2] (4,0) circle (0.1);
\fill[gray2] (4,-2) circle (0.1);
\fill[lightblue2] (1,4) circle (0.1);
\fill[lightblue2] (1,2) circle (0.1);
\fill[lightblue2] (1,0) circle (0.1);
\fill[lightblue2] (1,-2) circle (0.1);
\fill[lightblue2] (5,4) circle (0.1);
\fill[lightblue2] (5,2) circle (0.1);
\fill[lightblue2] (5,0) circle (0.1);
\fill[lightblue2] (5,-2) circle (0.1);
\fill[black] (0,3) circle (0.1);
\fill[black] (0,1) circle (0.1);
\fill[black] (0,-1) circle (0.1);
\fill[black] (0,-3) circle (0.1);
\fill[black] (4,3) circle (0.1);
\fill[black] (4,1) circle (0.1);
\fill[black] (4,-1) circle (0.1);
\fill[black] (4,-3) circle (0.1);
\fill[lila] (2,4) circle (0.1);
\fill[lila] (2,2) circle (0.1);
\fill[lila] (2,0) circle (0.1);
\fill[lila] (2,-2) circle (0.1);
\fill[lila] (6,4) circle (0.1);
\fill[lila] (6,2) circle (0.1);
\fill[lila] (6,0) circle (0.1);
\fill[lila] (6,-2) circle (0.1);
\fill[petrol] (2,3) circle (0.1);
\fill[petrol] (2,1) circle (0.1);
\fill[petrol] (2,-1) circle (0.1);
\fill[petrol] (2,-3) circle (0.1);
\fill[petrol] (6,3) circle (0.1);
\fill[petrol] (6,1) circle (0.1);
\fill[petrol] (6,-1) circle (0.1);
\fill[petrol] (6,-3) circle (0.1);
\fill[orangedot] (-1,0) circle (0.1);
\fill[orangedot] (-1,2) circle (0.1);
\fill[orangedot] (-1,4) circle (0.1);
\fill[orangedot] (3,-2) circle (0.1);
\fill[orangedot] (3,0) circle (0.1);
\fill[orangedot] (3,2) circle (0.1);
\fill[orangedot] (3,4) circle (0.1);
\fill[orangedot] (7,0) circle (0.1);
\fill[orangedot] (7,-2) circle (0.1);


\def\nextflag#1{%
\pgfmathsetmacro{\test}{isodd(\flags)}%
\ifnum\test=1 \draw #1; \fi%
\pgfmathsetmacro\flags{div(\flags,2)}%
}

\def\mymark#1#2#3{
\begin{scope}[xshift=#2cm,yshift=#3cm]
\pgfmathsetmacro\flags{#1}
\nextflag{(-.1,0) -- (.1,0)} 
\nextflag{(-.1,.1) -- (.1,.1)} 
\nextflag{(-.1,-.1) -- (.1,-.1)} 
\nextflag{(0,0) circle (.1)} 
\nextflag{(-.1,.1) -- (.1,-.1)} 
\nextflag{(-.1,-.1) -- (.1,.1)} 
\nextflag{(-.1,.1) -- (.1,0) -- (-.1,-.1)} 
\nextflag{(.1,.1) -- (-.1,0) -- (.1,-.1)} 
\nextflag{(-.1,-.1) -- (-.1,.1)} 
\nextflag{(.1,-.1) -- (.1,.1)} 
\end{scope}
}
\mymark{1}{-1}{0.5} \mymark{1}{7}{0.5}
\mymark{6}{-1}{1.5} \mymark{6}{7}{-0.5}
\mymark{7}{-1}{2.5} \mymark{7}{7}{-1.5}
\mymark{8}{-1}{3.5} \mymark{8}{7}{-2.5}
\mymark{16}{5}{1.5} \mymark{16}{5}{-0.5}
\mymark{18}{5}{2.5} \mymark{18}{5}{-1.5}
\mymark{20}{5}{3.5} \mymark{20}{5}{-2.5}
\mymark{32}{3}{1.5} \mymark{32}{3}{-0.5}
\mymark{34}{3}{2.5} \mymark{34}{3}{-1.5}
\mymark{36}{3}{3.5} \mymark{36}{3}{-2.5}
\mymark{48}{1}{1.5} \mymark{48}{1}{-0.5}
\mymark{50}{1}{2.5} \mymark{50}{1}{-1.5}
\mymark{52}{1}{3.5} \mymark{52}{1}{-2.5}
\mymark{64}{4}{-2.5} \mymark{64}{4}{3.5}
\mymark{66}{4}{-1.5} \mymark{66}{4}{2.5}
\mymark{68}{4}{-.5} \mymark{68}{4}{1.5}
\mymark{128}{2}{-2.5} \mymark{128}{2}{3.5}
\mymark{130}{2}{-1.5} \mymark{130}{2}{2.5}
\mymark{132}{2}{-.5} \mymark{132}{2}{1.5}
\mymark{257}{0}{-2.5} \mymark{257}{0}{3.5}
\mymark{262}{0}{-1.5} \mymark{262}{0}{2.5}
\mymark{263}{0}{-.5} \mymark{263}{0}{1.5}
\mymark{513}{6}{-2.5} \mymark{513}{6}{3.5}
\mymark{518}{6}{-1.5} \mymark{518}{6}{2.5}
\mymark{519}{6}{-.5} \mymark{519}{6}{1.5}

\mymark{2+4+16+32}{-0.5}{4} \mymark{2+4+16+32}{3.5}{0}
\mymark{768}{1.5}{4} \mymark{768}{5.5}{0}
\mymark{2+4+16+32+256+512}{3.5}{4} \mymark{2+4+16+32+256+512}{-0.5}{0}
\mymark{816}{5.5}{4} \mymark{816}{1.5}{0}

\mymark{64+256}{0.5}{-3} \mymark{64+256}{4.5}{1}
\mymark{64+512}{2.5}{-3} \mymark{64+512}{6.5}{1}
\mymark{128+256}{4.5}{-3} \mymark{128+256}{0.5}{1}
\mymark{128+512}{6.5}{-3} \mymark{128+512}{2.5}{1}

\end{tikzpicture}
    \caption{The origami $(W_2,x,y)$ as in \Cref{expl::generalized-wollmilchsau} is a cover of the Eierlegende Wollmilchsau of degree $4$.}
    \label{fig::generalized-wollmilchsau}
\end{center}
\end{example}

\appendix

\section{\texorpdfstring{\texttt{GAP}}{GAP} code} \label{sec-code}

All of the following is \texttt{GAP4} code (see \cite{GAP4}).

\begin{lstlisting}[label=code-1,caption={For natural numbers $n,k$ with $1\le k\le n-2$, the following code defines the group $G_{(n,k)}^2$ as well as the corresponding $2$-generating set constructed in \Cref{constr of 2-gps}.\medskip}] 
G2 := function(n,k)
local C1, C2, alpha, phi, G, x, y;

C1 := CyclicGroup(2^(k+1)); 
C2 := CyclicGroup(2^(n-k-1));
alpha := GroupHomomorphismByImages(C1, C1, [C1.1], [(C1.1)^(-1)]);
phi := GroupHomomorphismByImages(C2, AutomorphismGroup(C1), [C2.1],
       [alpha]);

G := SemidirectProduct(C2, phi,C1);
x := Image(Embedding(G,2), C1.1); 
y := Image(Embedding(G,1), C2.1);

return [G, x, y];
end;
\end{lstlisting}

\begin{lstlisting}[label=code-2,caption={For an odd prime $p$ and natural numbers $n,k$ with $k<\frac{n}{2}$, the following code defines the group $G_{(n,k)}^p$ as well as the corresponding $2$-generating set constructed in \Cref{group theory: strata p-origamis for p>2}.\medskip}] 
Gp := function(p,n,k)
local C1, C2, alpha, phi, G, x, y;

C1 := CyclicGroup(p^(k+1));
C2 := CyclicGroup(p^(n-k-1));
alpha := GroupHomomorphismByImages(C1, C1,[C1.1], [(C1.1)^(p+1)]);
phi := GroupHomomorphismByImages(C2, AutomorphismGroup(C1), [C2.1],
       [alpha]);

G := SemidirectProduct(C2, phi, C1);
x := Image(Embedding(G,2), C1.1);
y := Image(Embedding(G,1), C2.1);

return [G, x, y];
end;
\end{lstlisting}

\begin{lstlisting}[label=code-counterex,caption={Variations of the following code were used to find $p$-groups which do not have \property.\medskip}] 
p := 3; n := 4;
g := SylowSubgroup(SymmetricGroup(p^n), p);

repeat x := Random(g); y := Random(g);
until Order(Comm(x, y)) <> Order(Comm(x, y^2));
\end{lstlisting}

\begin{lstlisting}[label=code-powerclosed,caption={The following code defines a function to test whether a given $p$-group is weakly power-closed (i.e., products of $p^k$-th powers are $p^k$-th powers for any $k\geq 0$), and uses it to find a $2$-generated subgroup $G$ of the $2$-Sylow subgroup of the symmetric group $S_{2^4}$ with generators $x,y$ such that {$\ord([x,y])\neq\ord([x,y^3])$} and $G'$ is weakly power-closed.\medskip}]
IsWeaklyPowerClosedPGroup := function(g)
  local powers, el;
  if IsTrivial(g) then return true; fi;
  powers := g;
  repeat
    powers := Set(powers, x -> x^PrimePGroup(g));
    if Size(Group(powers)) > Size(powers) then return false; fi;
  until IsTrivial(powers);
  return true;
end;

p := 2; n := 4;
gg := SylowSubgroup(SymmetricGroup(p^n), p);;
repeat x := Random(gg); y := Random(gg); 
  g := Group(x, y); d := DerivedSubgroup(g);
until Order(Comm(x, y)) <> Order(Comm(x, y^(p+1)))
  and IsWeaklyPowerClosedPGroup(d);
\end{lstlisting}


\bibliographystyle{amsalpha}
\bibliography{biblio}

\newcommand{\etalchar}[1]{$^{#1}$}
\providecommand{\bysame}{\leavevmode\hbox to3em{\hrulefill}\thinspace}
\providecommand{\MR}{\relax\ifhmode\unskip\space\fi MR }
\providecommand{\MRhref}[2]{%
  \href{http://www.ams.org/mathscinet-getitem?mr=#1}{#2}
}
\providecommand{\href}[2]{#2}
\begin{thebibliography}{DdSMS99}

\bibitem[AAH18]{Athreya2018}
Jayadev~S Athreya, David Aulicino, and W~Patrick Hooper, \emph{Platonic solids
  and high genus covers of lattice surfaces}, 2018.

\bibitem[ADG{\etalchar{+}}19]{aggarwal2019}
Amol Aggarwal, Vincent Delecroix, Elise Goujard, Peter Zograf, and Anton
  Zorich, \emph{Conjectural large genus asymptotics of {M}asur-{V}eech volumes
  and of area {S}iegel-{V}eech constants of strata of quadratic differentials},
  2019.

\bibitem[AN19]{aulicino2019}
David Aulicino and Chaya Norton, \emph{{S}himura-{T}eichm\"uller curves in
  genus 5}, 2019.

\bibitem[Ber08]{Berkovich}
Yakov Berkovich, \emph{Groups of prime power order. {V}ol. 1}, De Gruyter
  Expositions in Mathematics, vol.~46, Walter de Gruyter GmbH \& Co. KG,
  Berlin, 2008, With a foreword by Zvonimir Janko.

\bibitem[BV13]{bowman-valdez}
Joshua~P. Bowman and Ferr\'{a}n Valdez, \emph{Wild singularities of flat
  surfaces}, Israel J. Math. \textbf{197} (2013), no.~1, 69--97.

\bibitem[DdSMS99]{DSMS}
John~D. Dixon, Marcus P.~F. du~Sautoy, Avinoam Mann, and Dan Segal,
  \emph{Analytic pro-{$p$} groups}, second ed., Cambridge Studies in Advanced
  Mathematics, vol.~61, Cambridge University Press, Cambridge, 1999.

\bibitem[DGZZ16]{delecroix2016}
Vincent Delecroix, Elise Goujard, Peter Zograf, and Anton Zorich,
  \emph{Square-tiled surfaces of fixed combinatorial type: equidistribution,
  counting, volumes of the ambient strata}, 2016.

\bibitem[DGZZ19]{delecroix2019}
\bysame, \emph{Contribution of one-cylinder square-tiled surfaces to
  {M}asur-{V}eech volumes}, 2019.

\bibitem[DHV]{DHV}
Vincent Delecroix, Pascal Hubert, and Ferrán Valdez, \emph{Infinite
  translation surfaces in the wild}, To appear.

\bibitem[EM12]{Ellenberg}
Jordan~S. Ellenberg and David~B. McReynolds, \emph{Arithmetic {V}eech
  sublattices of {${\rm SL}(2,\mathbf{Z})$}}, Duke Math. J. \textbf{161}
  (2012), no.~3, 415--429.

\bibitem[EM18]{eskin2013invariant}
Alex Eskin and Maryam Mirzakhani, \emph{Invariant and stationary measures for
  the {${\rm SL}(2,\mathbb{R})$} action on moduli space}, Publ. Math. Inst.
  Hautes \'{E}tudes Sci. \textbf{127} (2018), 95--324.

\bibitem[For91]{Forster}
Otto Forster, \emph{Lectures on {R}iemann surfaces}, Graduate Texts in
  Mathematics, vol.~81, Springer-Verlag, New York, 1991, Translated from the
  1977 German original by Bruce Gilligan, Reprint of the 1981 English
  translation.

\bibitem[GAP19]{GAP4}
The GAP~Group, \emph{{GAP -- Groups, Algorithms, and Programming, Version
  4.10.2}}, 2019.

\bibitem[GM20]{GM20}
Elise Goujard and Martin M{\"o}ller, \emph{Counting {F}eynman-like graphs:
  quasimodularity and {S}iegel-{V}eech weight}, J. Eur. Math. Soc. (JEMS)
  \textbf{22} (2020), no.~2, 365--412.

\bibitem[Hal40]{Hall}
Philip Hall, \emph{The classification of prime-power groups}, J. Reine Angew.
  Math. \textbf{182} (1940), 130--141.

\bibitem[HHW13]{HooperWeiss}
W.~Patrick Hooper, Pascal Hubert, and Barak Weiss, \emph{Dynamics on the
  infinite staircase}, Discrete Contin. Dyn. Syst. \textbf{33} (2013), no.~9,
  4341--4347.

\bibitem[HL06]{HubertLelievre06}
Pascal Hubert and Samuel Leli\`evre, \emph{Prime arithmetic {T}eichm{\"u}ller
  discs in {$H(2)$}}, Israel J. Math. \textbf{151} (2006), 281--321.

\bibitem[HS04]{HubertSchmidt}
Pascal Hubert and Thomas~A. Schmidt, \emph{Infinitely generated {V}eech
  groups}, Duke Math. J. \textbf{123} (2004), no.~1, 49--69.

\bibitem[HS08]{HerSchmit}
Frank Herrlich and Gabriela Schmith{\"u}sen, \emph{An extraordinary origami
  curve}, Math. Nachr. \textbf{281} (2008), no.~2, 219--237.

\bibitem[HS10]{HubertSchmithuesen}
Pascal Hubert and Gabriela Schmith{\"u}sen, \emph{Infinite translation surfaces
  with infinitely generated {V}eech groups}, J. Mod. Dyn. \textbf{4} (2010),
  no.~4, 715--732.

\bibitem[Hup67]{hup}
Bertram Huppert, \emph{Endliche {G}ruppen. {I}}, Die Grundlehren der
  Mathematischen Wissenschaften, Band 134, Springer-Verlag, Berlin-New York,
  1967.

\bibitem[Kar20]{Karg}
Christoph Karg, \emph{On the coarse geometry of infinite regular translation
  surfaces}, Ph.D. thesis, Karlsruhe Insitute of Technology, 2020.

\bibitem[KMS86]{KMS86}
Steven Kerckhoff, Howard Masur, and John Smillie, \emph{Ergodicity of billiard
  flows and quadratic differentials}, Ann. of Math. (2) \textbf{124} (1986),
  no.~2, 293--311.

\bibitem[KZ03]{Kontsevich_2003}
Maxim Kontsevich and Anton Zorich, \emph{Connected components of the moduli
  spaces of abelian differentials with prescribed singularities}, Inventiones
  Mathematicae \textbf{153} (2003), no.~3, 631–678.

\bibitem[LG94]{LG-coclass}
Charles~R. Leedham-Green, \emph{The structure of finite {$p$}-groups}, J.
  London Math. Soc. (2) \textbf{50} (1994), no.~1, 49--67.

\bibitem[LGM00]{LGMcKpro-p}
Charles~R. Leedham-Green and Susan McKay, \emph{On the classification of
  {$p$}-groups and pro-{$p$} groups}, New horizons in pro-{$p$} groups, Progr.
  Math., vol. 184, Birkh\"{a}user Boston, Boston, MA, 2000, pp.~55--74.

\bibitem[LGM02]{LGMcK}
\bysame, \emph{The structure of groups of prime power order}, London
  Mathematical Society Monographs. New Series, vol.~27, Oxford University
  Press, Oxford, 2002, Oxford Science Publications.

\bibitem[LGN80]{LGN-coclass}
Charles~R. Leedham-Green and Michael~F. Newman, \emph{Space groups and groups
  of prime-power order. {I}}, Arch. Math. (Basel) \textbf{35} (1980), no.~3,
  193--202.

\bibitem[LM87]{LM}
Alexander Lubotzky and Avinoam Mann, \emph{Powerful {$p$}-groups. {I}. {F}inite
  groups}, J. Algebra \textbf{105} (1987), no.~2, 484--505.

\bibitem[Man76]{Mann}
Avinoam Mann, \emph{The power structure of {$p$}-groups. {I}}, J. Algebra.
  \textbf{42} (1976), no.~1, 121--135.

\bibitem[Mas82]{Masur}
Howard Masur, \emph{Interval exchange transformations and measured foliations},
  Annals of Mathematics \textbf{115} (1982), no.~1, 169--200.

\bibitem[McM05]{McM}
Curtis~T. McMullen, \emph{Teichm{\"u}ller curves in genus two: discriminant and
  spin}, Math. Ann. \textbf{333} (2005), no.~1, 87--130.

\bibitem[M{\"o}l11]{Moeller}
Martin M{\"o}ller, \emph{Shimura and {T}eichm\"uller curves}, J. Mod. Dyn.
  \textbf{5} (2011), no.~1, 1--32.

\bibitem[MYZ14]{MYZ}
Carlos Matheus, Jean-Christophe Yoccoz, and David Zmiaikou, \emph{Homology of
  origamis with symmetries}, Ann. Inst. Fourier (Grenoble) \textbf{64} (2014),
  no.~3, 1131--1176.

\bibitem[Ran16]{Randecker}
Anja Randecker, \emph{Geometry and topology of wild translation surfaces},
  Ph.D. thesis, Karlsruhe Insitute of Technology, 2016.

\bibitem[Sch04]{schmithuesen2004algorithm}
Gabriela Schmith{\"u}sen, \emph{An algorithm for finding the {V}eech group of
  an origami}, Experiment. Math. \textbf{13} (2004), no.~4, 459--472.

\bibitem[Sch07]{schmithuesen2007origamis}
\bysame, \emph{Origamis with non congruence {V}eech groups}, Proceedings of
  34th {S}ymposium on {T}ransformation {G}roups, Wing Co., Wakayama, 2007,
  pp.~31--55.

\bibitem[Sha94]{Sh-coclass}
Aner Shalev, \emph{The structure of finite {$p$}-groups: effective proof of the
  coclass conjectures}, Invent. Math. \textbf{115} (1994), no.~2, 315--345.

\bibitem[vdW73]{Waall}
Robert~W. van~der Waall, \emph{On finite {$p$}-groups whose commutator
  subgroups are cyclic}, Nederl. Akad. Wetensch. Proc. Ser. A {\textbf{
  76}}=Indag. Math. \textbf{35} (1973), 342--345.

\bibitem[Vee89]{Veech89}
William~A. Veech, \emph{Teichm{\"u}ller curves in moduli space, {E}isenstein
  series and an application to triangular billiards}, Invent. Math. \textbf{97}
  (1989), no.~3, 553--583.

\bibitem[Wil02]{wil}
Lawrence Wilson, \emph{On the power structure of powerful {$p$}-groups}, J.
  Group Theory \textbf{5} (2002), no.~2, 129--144.

\bibitem[Wri15]{wright2014}
Alex Wright, \emph{Translation surfaces and their orbit closures: an
  introduction for a broad audience}, EMS Surv. Math. Sci. \textbf{2} (2015),
  no.~1, 63--108.

\bibitem[WS13]{WS-deficiency}
Gabriela Weitze-Schmith\"usen, \emph{The deficiency of being a congruence group
  for {V}eech groups of origamis}, International Mathematics Research Notices
  \textbf{2015} (2013), no.~6, 1613–1637.

\bibitem[Zmi11]{Zmiaikou}
David Zmiaikou, \emph{Origamis and permutation groups}, Ph.D. thesis,
  University Paris-Sud, Orsay, 2011.

\bibitem[Zor06]{Zorich06}
Anton Zorich, \emph{Flat surfaces}, Frontiers in number theory, physics, and
  geometry. {I}, Springer, Berlin, 2006, pp.~437--583.

\end{thebibliography}

\end{document}